\documentclass[10pt]{article}
\usepackage{standalone,tikz}
\usepackage{graphics,verbatim}
\usepackage{amssymb,amsmath,amsthm}
\usepackage[font=footnotesize]{caption}

\usepackage{fullpage}
\newtheorem{theorem}{Theorem}[section]
\newtheorem{thm}[theorem]{Theorem}

\newtheorem{conjecture}[theorem]{Conjecture}

\newtheorem{lemma}[theorem]{Lemma}
\newtheorem{lem}[theorem]{Lemma}

\newtheorem{claim}{Claim}

\theoremstyle{definition}
\newtheorem{definition}[theorem]{Definition}
\newtheorem{defn}[theorem]{Definition}

%

\newcommand\Z{\mathbb{Z}}
\RequirePackage{marginnote,hyperref}
\addtolength{\marginparwidth}{-25pt}
\newcommand{\aside}[1]{\marginnote{\scriptsize{#1}}[0cm]}
\newcommand{\aaside}[2]{\marginnote{\scriptsize{#1}}[#2]}
\newcommand\Emph[1]{\emph{#1}\aside{#1}}
\newcommand\EmphE[2]{\emph{#1}\aaside{#1}{#2}}

\newcommand\dit{^{\circ}}
\newcommand\diit{^{\circ\circ}}
\newcommand\diiit{^{\circ\circ\circ}}

\newcommand{\F}{\mathcal{F}}
\renewcommand{\P}{\mathcal{P}}
\newcommand{\Q}{\mathcal{Q}}
\newcommand{\T}{\mathcal{T}}

\def\aftermath{\par\vspace{-\belowdisplayskip}\vspace{-\parskip}\vspace{-\baselineskip}}

\newcommand{\JCTB}{{\it J. Combin. Theory Ser. B.}, }
\newcommand{\JGT}{{\it J. Graph Theory}, }
\newcommand{\ComHung}{{\it Combinatorica}, }
\newcommand{\DM}{{\it Discrete Math.}, }
\newcommand{\DAM}{{\it Discrete Appl. Math.}, }
\newcommand{\SIAMDM}{{\it SIAM J. Discrete Math.}, }
\newcommand{\JLMS}{{\it J. London Math. Soc.}, }

\begin{document}

\title{  Circular Flows in Planar Graphs}

\author{Daniel W. Cranston\thanks{Department of Mathematics and Applied Mathematics,
Virginia Commonwealth University, Richmond, VA, USA;
\texttt{dcranston@vcu.edu}; This research is partially supported by NSA Grant
H98230-15-1-0013.} \and Jiaao Li\thanks{School of Mathematical Sciences and LPMC
 Nankai University, Tianjin 300071, China;
\texttt{lijiaao@nankai.edu.cn}}}
\date{}
\maketitle

\begin{abstract}
For integers $a\ge 2b>0$, a \emph{circular $a/b$-flow} is a flow that takes values
from $\{\pm b, \pm(b+1), \dots, \pm(a-b)\}$.
The Planar Circular Flow
Conjecture states that every $2k$-edge-connected planar graph admits a circular
$(2+\frac{2}{k})$-flow.
The cases $k=1$ and $k=2$ are equivalent to
the Four Color Theorem and Gr\"{o}tzsch's 3-Color Theorem.  For $k\ge 3$,
the conjecture remains open.
Here we make progress when $k=4$ and $k=6$.  We prove that
(i) {\em every $10$-edge-connected planar graph admits a circular
$5/2$-flow} and (ii) {\em every $16$-edge-connected planar graph admits a
circular $7/3$-flow.}  The dual version of statement (i) on circular
coloring was previously proved  by Dvo\v{r}\'{a}k and Postle (Combinatorica
2017), but our proof has the advantages of being much shorter and avoiding the
use of computers for case-checking.  Further, it has new implications
for antisymmetric flows. Statement (ii) is especially interesting because
the counterexamples to Jaeger's original Circular Flow Conjecture are
$12$-edge-connected nonplanar graphs that admit no circular $7/3$-flow.
Thus, the planarity hypothesis of (ii) is essential.
\end{abstract}

\section{Introduction}
\subsection{Planar Circular Flow Conjecture}
For integers $a\ge 2b>0$, a \Emph{circular
$a/b$-flow}\footnote{Jaeger~\cite{Jaeger1988} showed that if $p,q,r,s\in \Z^+$
and $p/q=r/s$, then each graph $G$ has a circular $p/q$-flow if and only if it
has a circular $r/s$-flow.  (See~\cite{Goddyn1998} for more details.) We use
this result implicitly in the present paper.}
 is a flow that takes values from $\{\pm b,
\pm(b+1), \dots, \pm(a-b)\}$.  In this paper we study the following conjecture,
which arises from Jaeger's Circular Flow Conjecture \cite{Jaeger1988}.

\begin{conjecture}[Planar Circular Flow Conjecture]
\label{CONJ: PCFC}~\\
Every $2k$-edge-connected planar graph admits a circular
$(2+\frac{2}{k})$-flow.
\end{conjecture}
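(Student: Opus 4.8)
\medskip\noindent\textbf{Proof proposal.}
The plan is to attack the conjecture through planar duality. A connected plane graph $G$ is $2k$-edge-connected if and only if its planar dual $G^*$ has girth at least $2k$, and $G$ admits a circular $a/b$-flow if and only if $G^*$ admits a circular $a/b$-coloring (the tension-side notion dual to a circular flow). Hence it suffices to prove the equivalent statement that \emph{every planar graph $H$ of girth at least $2k$ admits a circular $(2+\frac{2}{k})$-coloring}, where $2+\frac{2}{k}=\frac{2k+2}{k}$. One may equally work on the flow side with orientations: a circular $\frac{2k+2}{k}$-flow of $G$ corresponds to an orientation of $G$ in which every bond (minimal edge cut) is sufficiently balanced, each of its two directed parts receiving at least a $\frac{k}{2k+2}$ fraction of its edges; so the task becomes producing such an orientation on every $2k$-edge-connected planar graph. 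To make either formulation run under induction I would first strengthen it to a contraction-closed statement on the flow side, in the spirit of group connectivity (dually, a list or DP version of circular coloring), since the bare existence statement need not survive the reductions below.

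With the strengthened statement in hand, argue by induction on $|V(H)|+|E(H)|$ and analyze a minimal counterexample $H$, in the tradition of the proofs of the Four Color Theorem ($k=1$) and Gr\"{o}tzsch's Theorem ($k=2$). The two pillars are \textbf{(a)} a reducibility lemma giving a family $\mathcal{C}$ of configurations such that, whenever $H$ contains a member of $\mathcal{C}$, one can delete, contract, or identify part of it, apply induction to the smaller graph, and extend the resulting coloring (resp.\ orientation) back to $H$ using the strengthened hypothesis; and \textbf{(b)} a discharging argument, powered by Euler's formula together with $\mathrm{girth}(H)\ge 2k$, showing that every such planar $H$ must contain a configuration in $\mathcal{C}$. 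The delicate point is the extension step: in a circular (or DP) coloring a colored neighbor forbids an \emph{arc} of colors, not a single value, so the configurations must be chosen so that the forbidden arcs around any uncolored vertex never cover the whole circle of length $2+\frac{2}{k}$. This is exactly where the arithmetic of $\frac{2k+2}{k}$ must be used: it is strictly larger than $\frac{2k+1}{k}$, and the latter ratio genuinely fails on nonplanar $12$-edge-connected graphs (the phenomenon behind statement (ii) of the abstract), so the slack needed for the extensions really must be extracted from planarity via Euler's formula.

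The step I expect to be the main obstacle is matching the connectivity bound exactly to $2k$. Every technique currently available --- the discharging-plus-extension arguments of this paper that yield the $10$-edge-connected and $16$-edge-connected instances of the $k=4$ and $k=6$ cases, the Dvo\v{r}\'{a}k--Postle approach to the dual of the $k=4$ case, and the spanning-tree-packing and group-connectivity machinery of Lov\'{a}sz--Thomassen--Wu--Zhang type (a $2k$-edge-connected graph has $k$ edge-disjoint spanning trees, though the argument consumes rather more connectivity) --- loses a constant or larger factor between the edge-connectivity it consumes and the optimal value $2k$. Closing this gap for all $k\ge 3$ seems to require either a genuinely new supply of reducible configurations that remain reducible when the girth is only $2k$, or a global, amortized argument --- a potential-function discharge, an entropy-compression scheme, or a direct orientation construction tuned to planarity --- that never needs the extra girth slack. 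I do not expect the plain induction-and-discharging framework to reach $2k$ unaided; executed honestly it yields the partial results announced in the abstract, and the full conjecture appears to need a further idea.
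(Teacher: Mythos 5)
The statement you were asked about is a \emph{conjecture}, and the paper contains no proof of it: the authors state explicitly that for $k\ge 3$ the Planar Circular Flow Conjecture remains open, and what they actually prove are the weaker results that $10$-edge-connectivity suffices for a circular $5/2$-flow ($k=4$ would require only $8$) and $16$-edge-connectivity suffices for a circular $7/3$-flow ($k=6$ would require only $12$). So there is no ``paper's own proof'' to compare yours against, and your write-up is, by your own candid admission in the final paragraph, a research program rather than a proof. The gap is therefore the entire content of the conjecture for $k\ge 3$.

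That said, your outline is an accurate description of the machinery the paper deploys for its partial results, with one notable difference of formulation: the paper does not pass to the dual and work with circular colorings. It stays on the flow side, converts circular $(2+\frac1p)$-flows into modulo $(2p+1)$-orientations via Jaeger's lemma, and the ``contraction-closed strengthening'' you correctly anticipate is the notion of $(\Z_{2p+1},\beta)$-orientations and strong $\Z_{2p+1}$-connectivity, quantified by the spanning-tree-packing-style weight functions $w$ and $\rho$. The reducible configurations are obtained by lifting and contraction rather than by coloring extensions, which sidesteps the arc-covering issue you flag on the coloring side. Your diagnosis of where the loss occurs is also correct: the discharging step needs $w(G)\ge 0$ (respectively $\rho(G)\ge 0$), which forces the $11$- and $17$-edge-connectivity thresholds in the technical theorems, and nothing in the present framework recovers the optimal bounds $8$ and $12$. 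Two small cautions on your setup: the duality between edge-connectivity of $G$ and girth of $G^*$ requires $G$ connected and identifies \emph{bonds} with dual cycles, and for odd $k$ the fraction $\frac{2k+2}{k}$ is not of the form $2+\frac1p$, so the modulo-orientation reformulation applies directly only to even $k$; the conjecture as stated covers odd $k$ too, where even the correct ``orientation'' reformulation is more delicate.
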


When $k=1$ this conjecture is the flow version of the 4 Color Theorem. It is
true for planar graphs (by 4CT), but false for nonplanar graphs because of the
Petersen graph, and all other snarks. Tutte's $4$-Flow
Conjecture, from 1966, 
claims that Conjecture~\ref{CONJ: PCFC}
extends to every graph with no Petersen minor.  When $k=2$, Conjecture~\ref{CONJ:
PCFC} is the dual of Gr\"{o}tzsch's 3-Color Theorem.  Tutte's $3$-Flow
Conjecture, from 1972, asserts that it extends to all graphs (both planar and
nonplanar).  In 1981 Jaeger further extended Tutte's Flow Conjectures, by
proposing a general Circular Flow Conjecture:
{\em for each even integer $k\ge 2$, every $2k$-edge-connected graph admits a
circular $(2+\frac{2}{k})$-flow}.  That is, he believed Conjecture~\ref{CONJ:
PCFC} extends to all graphs for all even $k$. A weaker version of Jaeger's
conjecture was proved by Thomassen~\cite{Thomassen2012}, for graphs with edge
connectivity at least $2k^2+k$.  This edge connectivity condition was
substantially improved by Lov\'asz, Thomassen, Wu, Zhang~\cite{LTWZ2013}.

\begin{theorem}{\em (Lov\'asz, Thomassen, Wu, Zhang~\cite{LTWZ2013})}
For each even integer $k\ge 2$, every $3k$-edge-connected graph admits a circular $(2+\frac{2}{k})$-flow.
\label{LTWZ-thm}
\end{theorem}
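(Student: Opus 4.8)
The plan is to pass from circular flows to modular orientations and then run an induction on a minimal counterexample. Write $p:=k+1$; since $k$ is even, $p$ is odd and $3k=3(p-1)$. A graph admits a circular $(2+\tfrac2k)$-flow — equivalently, by the footnote, a circular $\tfrac{2p}{p-1}$-flow — if and only if it admits a \emph{mod-$p$ orientation}, an orientation $D$ with $d^+_D(v)\equiv d^-_D(v)\pmod p$ for every vertex $v$; this is a well-known reformulation (see \cite{Jaeger1988}). So it suffices to show that every $3(p-1)$-edge-connected graph has a mod-$p$ orientation. The induction needs a stronger hypothesis, so I would instead prove (a strengthening, in the style of Thomassen~\cite{Thomassen2012}, of) the assertion that every $3(p-1)$-edge-connected graph is \emph{strongly $\mathbb{Z}_p$-connected}: for every $\beta\colon V(G)\to\mathbb{Z}_p$ with $\sum_v\beta(v)\equiv|E(G)|\pmod p$ there is an orientation $D$ with $d^+_D(v)\equiv\beta(v)\pmod p$ for all $v$ — and, to make the induction self-sustaining, I would allow one exceptional vertex whose degree may be as small as $p-1$ rather than $3(p-1)$. (Taking $\beta(v)=\tfrac{p+1}{2}d(v)$ recovers mod-$p$ orientations; the advantage of this formulation is that it is stable under the operations used to shrink $G$.)

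Next I would assemble the reduction toolkit. (i) $K_1$ is strongly $\mathbb{Z}_p$-connected. (ii) If $H\subsetneq G$ is strongly $\mathbb{Z}_p$-connected, then $G$ is strongly $\mathbb{Z}_p$-connected whenever $G/H$ is: realize in $G/H$ the boundary that $\beta$ induces there, then extend over $H$ using its strong $\mathbb{Z}_p$-connectedness — the boundary this forces on $H$ automatically meets the sum condition. (iii) A short list of small \emph{gadgets} (thetas with enough parallel edges, short cycles with one blown-up vertex, and the like) is strongly $\mathbb{Z}_p$-connected; these serve as base cases and as forbidden structures for an irreducible graph. (iv) Splitting off (lifting) a pair of edges $vx,vy$ into one edge $xy$ is compatible with realizing boundaries: given an orientation of the smaller graph, route the pair through $v$ — as a directed path, or with both halves toward $v$, or both away from $v$ — which lets us add $1$, $0$, or $2$ to $d^+_D(v)$ respectively, each choice shifting the induced boundary on the smaller graph by a controlled amount. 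Thus, after splitting off all (or all but one) of the edges at a vertex $v$, we retain full control of $d^+_D(v)$ modulo $p$ provided $d(v)\ge p-1$.

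With the toolkit in place, take a counterexample $G$ minimizing $(|V(G)|,|E(G)|)$ lexicographically. By (i) we may assume $|V(G)|\ge 2$; by (ii) and minimality $G$ has no proper strongly $\mathbb{Z}_p$-connected subgraph on $\ge 2$ vertices, so by (iii) all edge multiplicities are bounded and $G$ has no small dense subgraph; and $3(p-1)$-edge-connectivity forces every non-exceptional vertex to have degree $\ge 3(p-1)$. Pick the exceptional vertex, or else a vertex of minimum degree. Using Mader's and Lov\'asz's splitting-off theorems (legal here since $G$ is highly edge-connected, hence bridgeless), split $v$ off completely and delete it; the splitting theorems preserve $\lambda_{G'}(x,y)=\lambda_G(x,y)\ge 3(p-1)$ for all $x,y\ne v$, so the smaller graph $G'$ falls under the induction hypothesis and is strongly $\mathbb{Z}_p$-connected. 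Given a target $\beta$ for $G$, choose the routings of the lifted pairs via (iv) so that $d^+_D(v)\equiv\beta(v)\pmod p$, record the boundary $\beta'$ this induces on $G'$, realize $\beta'$ on $G'$ by induction, and lift back. This produces the desired orientation of $G$, a contradiction.

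The heart of the argument, and the step I expect to be the main obstacle, is this splitting-off reduction: one must keep $G'$ simultaneously smaller, as edge-connected as $G$ away from $v$, and compatible with a boundary that lifts back to $\beta$. The splitting theorems carry hypotheses (no bridge at $v$, which is fine here, but also the parity of $d(v)$ and possible loss of connectivity at intermediate stages of a complete split), and a vertex of odd degree cannot be split off cleanly — this is exactly where the exceptional vertex of degree $\ge p-1$ in the strengthened hypothesis earns its keep, since one leaves a single edge at such a vertex and passes the low-degree status to the next graph, so the induction must be arranged to tolerate and regenerate that vertex. Finally, the degenerate cases — many parallel edges at $v$, a very small irreducible $G$ — must be cleared against the gadget list of (iii). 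Checking that all of this dovetails, and in particular that the degree budget closes (the edges at $v$ must pay both for maintaining $3(p-1)$-edge-connectivity after splitting and for steering $d^+_D(v)$ through every residue modulo $p$), is what pins the edge-connectivity constant at $3k=3(p-1)$.
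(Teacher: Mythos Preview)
First, note that the paper does not prove this theorem; it is quoted from \cite{LTWZ2013} as background, so there is no in-paper argument to compare against and your proposal must stand on its own.

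The sketch has a genuine gap at the splitting step, and it is exactly the point you yourself flag as ``the main obstacle.'' You propose to split $v$ off completely via Mader's theorem and then ``choose routings'' of the lifted pairs to hit $\beta(v)$. But once a pair $vx,vy$ has been lifted to an edge $xy$ of $G'$ and $G'$ has been oriented, the only way to unpack $xy$ back into $G$ without disturbing the already-realized boundary at $x$ and $y$ is as a directed path through $v$ --- and that contributes net $0$ to $d^+_D(v)-d^-_D(v)$. The ``both toward $v$'' and ``both away from $v$'' routings you list change the boundary at $x$ or $y$ by $\pm 2$, so the orientation you found on $G'$ no longer realizes $\beta'$ there. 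If instead you fix routings \emph{before} orienting $G'$, then every ``both in/out'' pair is not a lifted edge of $G'$ at all but a pair of pre-oriented, deleted edges; the graph you hand to the induction is no longer the one Mader's theorem produced, and you have no guarantee it is $3(p-1)$-edge-connected. In short: a complete Mader split gives you \emph{zero} control over $\beta(v)$, while any scheme that reserves $m\ge p-1$ un-lifted edges at $v$ to steer $\beta(v)$ loses exactly those $m$ edges from every cut of $G'$ through a neighbour of $v$, so $G'$ need not satisfy the inductive hypothesis. Your final sentence asserts that ``the degree budget closes \ldots\ [and] pins the edge-connectivity constant at $3k$,'' but that closure \emph{is} the theorem, and nothing in the sketch establishes it. The actual argument of Lov\'asz--Thomassen--Wu--Zhang does carry an exceptional low-degree vertex $z_0$ through the induction, as you anticipate, but the reduction is not a Mader split-off; it is a carefully chosen pre-orientation of roughly $2p$ edges at a vertex adjacent to $z_0$ followed by a contraction, arranged so that the edge-connectivity loss is absorbed by the exceptional vertex rather than by the rest of the graph. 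Making that work with only $6p=3k$ edges at each ordinary vertex is the substance of \cite{LTWZ2013}.
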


In contrast, Jaeger's Circular Flow Conjecture was recently disproved for all
$k\ge 6$. 
In~\cite{HLWZ}, for each even integer $k\ge 6$,
the authors construct a $2k$-edge-connected nonplanar graph admitting no circular $(2+\frac{2}{k})$-flow.
And for large odd integers $k$, we can also modify the construction
in~\cite{HLWZ} to get $2k$-edge-connected nonplanar graphs admitting no
circular $(2+\frac{2}{k})$-flow.  Thus, the planarity hypothesis of
Conjecture~\ref{CONJ: PCFC} seems essential.
The case $k=4$ of Jaeger's Circular Flow Conjecture, which remains open, is
particularly important, since Jaeger \cite{Jaeger1988} observed that if every
$9$-edge-connected graph admits a circular $5/2$-flow, then
Tutte's celebrated $5$-Flow Conjecture follows.

Our main theorems improve on Theorem~\ref{LTWZ-thm}, restricted to planar graphs, when
$k\in\{4,6\}$. 
\begin{theorem}
  Every $10$-edge-connected planar graph admits a circular $5/2$-flow.
\label{5/2-flow-thm}
\end{theorem}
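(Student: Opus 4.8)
The plan is to pass to the equivalent formulation in terms of modulo orientations: by a theorem of Jaeger, a graph admits a circular $5/2$-flow if and only if it has a \emph{mod $5$-orientation}, i.e.\ an orientation with $d^+(v)\equiv d^-(v)\pmod 5$ at every vertex $v$. (Equivalently, by planar duality, one could instead show that every planar graph of girth at least $10$ has a homomorphism to $C_5$, but I would work on the flow side, within the framework of Lov\'asz, Thomassen, Wu and Zhang behind Theorem~\ref{LTWZ-thm}.) So it suffices to prove that every $10$-edge-connected planar graph $G$ has a mod $5$-orientation; note $G$ is necessarily a multigraph, since every simple planar graph has a vertex of degree at most $5$. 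To enable an induction I would prove the usual strengthening: for every boundary $\beta\colon V(G)\to\Z_5$ satisfying the obvious summation constraint there is an orientation with $d^+(v)-d^-(v)\equiv\beta(v)\pmod 5$, with the induction hypothesis invoked only on $10$-edge-connected planar graphs and on a few small dense auxiliary graphs shown directly to be $\Z_5$-connected.

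I would then take a counterexample $G$, together with a bad boundary $\beta$, minimizing $|V(G)|+|E(G)|$, and build a list of \emph{reducible configurations} that $G$ cannot contain. The first is that no two vertices are joined by $5$ or more parallel edges: a bundle of five parallel edges is $\Z_5$-connected, so by the standard contraction lemma (if $H\subseteq G$ is $\Z_5$-connected and $G/H$ has the required orientation, then so does $G$) such a bundle could be contracted to a smaller counterexample. Hence all edge-multiplicities are at most $4$. The remaining reducible configurations are local: a vertex $v$ of small degree (roughly, $d(v)$ not much larger than $10$) together with a neighbourhood mild enough that one can pair up the edges at $v$ and replace each pair $vx,vy$ by an edge $xy$ — using Mader's/Lov\'asz's splitting-off theorem to keep the smaller graph $10$-edge-connected and planar, and choosing the pairing so that the inherited boundary meets the summation constraint — after which minimality and ``un-splitting'' yield the orientation of $G$. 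Variants (splitting at two adjacent low-degree vertices, absorbing a short parallel bundle into a $\Z_5$-connected gadget on the neighbourhood of its endpoints, and so on) would be treated the same way.

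The argument then closes with discharging, which forces one of the reducible configurations to appear. Assign charge $d(v)-4$ to each vertex $v$ and $|f|-4$ to each face $f$; by Euler's formula the total charge is $-8<0$. Since $d(v)\ge 10$, every vertex begins with charge at least $6$, and the only faces with negative charge are the $2$-faces (charge $-2$) and $3$-faces (charge $-1$). Sending a fixed amount of charge from each vertex to each incident small face makes every small face nonnegative, and — provided the list of reducible configurations is comprehensive enough — leaves every vertex nonnegative as well, so the total charge cannot be $-8$: contradiction. The step I expect to be the main obstacle is calibrating this balance. Theorem~\ref{LTWZ-thm} supplies mod $5$-orientations only at edge-connectivity $12$, so the two missing units of connectivity must come from planarity alone; that forces the splitting-off reductions down to vertices of degree as small as $10$, where the choice of pairing and the boundary bookkeeping are most delicate, and it requires the reducible configurations and the discharging rules to be tuned so finely that the surplus $6$ at a degree-$10$ vertex is never exceeded. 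Assembling a complete list of reducible configurations and verifying that the discharging is tight enough is where the real difficulty lies.
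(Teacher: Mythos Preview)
Your outline has the right overall shape --- Jaeger's equivalence, minimal counterexample with a bad boundary, reducible configurations, discharging --- but the inductive framework has a genuine gap. You want to stay inside the class of $10$-edge-connected planar graphs and invoke Mader/Lov\'asz splitting-off to do so. These two hypotheses pull in opposite directions: Mader's theorem hands you \emph{some} liftable pair at $v$, but that pair may produce a crossing edge and destroy planarity; conversely, the only liftings guaranteed to keep the embedding planar are of edges consecutive in the rotation at $v$ (equivalently, pairs $xv,vy$ whose far endpoints are already adjacent), and those need not preserve $10$-edge-connectivity. So after one reduction you may no longer be in your induction class, and the argument stalls. This is not a matter of tuning the discharging more finely --- it is the invariant itself that is wrong.

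The paper's remedy is to replace edge-connectivity as the inductive invariant by a partition weight: for $\mathcal P=\{P_1,\dots,P_t\}$ set $w_G(\mathcal P)=\sum_i d(P_i)-11t+19$ and $w(G)=\min_{\mathcal P} w_G(\mathcal P)$, and prove that every planar $G$ with $w(G)\ge 0$ has a $(\Z_5,\beta)$-orientation unless $G/\mathcal P$ lies in a short list of tiny exceptional graphs for some $\mathcal P$. This invariant \emph{does} survive the planarity-preserving liftings (one only ever lifts $xv,vy$ with $xy\in E$), and a short partition calculus bootstraps $w(G)\ge 0$ into strong lower bounds on the weights of nontrivial partitions, hence on edge-cuts, in a minimal counterexample. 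The discharging is then carried out on faces alone, from the inequality $\sum_f \ell(f)\le \tfrac{22}{9}|F(G)|-\tfrac{2}{3}$ that $w(G)\ge 0$ and Euler's formula together yield. A separate point you are missing: the paper does \emph{not} prove that $10$-edge-connected planar graphs are strongly $\Z_5$-connected --- that is shown only at edge-connectivity $11$, and the drop to $10$ for modulo-$5$-orientations comes afterward from Zhang's splitting lemma for \emph{odd} edge-connectivity, a parity argument absent from your sketch.
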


\begin{theorem}
  Every $16$-edge-connected planar graph admits a circular $7/3$-flow.
\label{7/3-flow-thm}
\end{theorem}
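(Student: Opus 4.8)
\emph{Reformulation and setup.} The plan is to recast the problem as one about modulo-$7$ orientations, run an induction strengthened by boundary functions in the spirit of \cite{LTWZ2013}, and then use a discharging argument on the planar embedding to locate an unavoidable reducible configuration. A classical fact (essentially due to Jaeger, combined with the scaling recalled in the footnote) is that a graph $G$ admits a circular $7/3$-flow if and only if $G$ admits a \emph{mod $7$-orientation}, that is, an orientation $D$ with $d^+_D(v)\equiv d^-_D(v)\pmod 7$ for every vertex $v$; equivalently, $G$ carries a $\Z_7$-flow all of whose values lie in $\{\pm 3\}\subseteq\Z_7$. I would work with the more flexible notion of a \emph{$\beta$-orientation}: given $\beta\colon V(G)\to\Z_7$ with $\sum_{v}\beta(v)\equiv 0\pmod 7$, an orientation $D$ with $d^+_D(v)-d^-_D(v)\equiv\beta(v)\pmod 7$ for all $v$ (a mod $7$-orientation being the case $\beta\equiv 0$), and prove by induction a statement guaranteeing $\beta$-orientations for all admissible $\beta$ under a planarity hypothesis that is preserved by the reductions below. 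The extra freedom in $\beta$ is what lets the induction survive edge deletions and contractions that locally destroy $16$-edge-connectivity.

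\emph{Structural reductions.} Let $G$ be a counterexample to the strengthened statement minimizing $|V(G)|+|E(G)|$, fixed with a plane embedding. Using orientation-extension lemmas as in \cite{LTWZ2013}, I would first contract one side of any small nontrivial edge cut and lift a $\beta$-orientation back, so that $G$ has no small nontrivial edge cut; then remove loops and bundles of at least $7$ parallel edges, since orienting seven parallel edges the same way changes each relevant imbalance by $\pm 7\equiv 0$. More generally, one shows $G$ contains none of a short list of small configurations over which every $\beta$-orientation of the reduced graph extends --- dually, this is precisely the flexibility of homomorphisms to $C_7$ (the target of circular $7/3$-colorings) along long subdivided paths and across low-degree branch vertices.

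\emph{Discharging.} To a reducible-configuration-free plane multigraph I would then apply Euler's formula: assign charge $d(v)-6$ to each vertex $v$ and $2\ell(f)-6$ to each face $f$, so that the total charge is $-12$, and redistribute charge by local rules, using the structure above, until every vertex and face is left with nonnegative charge --- a contradiction. The constant $16$ should emerge as the threshold here: the smallest edge-connectivity (equivalently, the smallest girth of the dual $G^*$) at which the positive charge carried by the large faces can cover the deficits that remain once the reducible configurations have been excluded.

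\emph{Main obstacle.} The crux is the simultaneous sharpness of the last two steps. To improve on the $18$ of Theorem~\ref{LTWZ-thm} one needs reducible configurations that are as large and flexible as possible while still extendable, together with a discharging scheme with essentially no slack at $16$; proving the mod $7$-orientation extension lemmas for the required configurations while honoring the boundary function $\beta$ carried through the induction is the hard part. It is also exactly where planarity is used in full force: the $12$-edge-connected nonplanar graphs of \cite{HLWZ} with no circular $7/3$-flow show that no discharging argument of this kind can survive the removal of the planar hypothesis, so one cannot hope to push the method further.
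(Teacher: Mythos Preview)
Your outline has the right large-scale shape---reformulate as modulo $7$-orientations, strengthen to $\beta$-orientations for inductive stability, forbid reducible configurations in a minimal counterexample, and finish by discharging---and this is indeed how the paper proceeds. But two concrete pieces are missing, and without them the argument does not close.

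First, you never explain how the constant $16$ arises, and your discharging paragraph conflates two distinct steps. The paper does \emph{not} obtain $16$ directly from discharging. It first proves a technical partition-weight theorem (Theorem~\ref{THM: Main2}) built around the function $\rho_G(\P)=\sum_i d(P_i)-17t+31$; the ``$17$'' here is what the discharging actually yields, and the immediate corollary is that every \emph{$17$}-edge-connected planar graph is strongly $\Z_7$-connected. The drop from $17$ to $16$ is a separate argument that you omit entirely: one passes through \emph{odd} edge-connectivity and Zhang's splitting lemma, lifting edge pairs at vertices of even degree to reduce to the $17$-edge-connected case, then contracts a minimum-side strongly $\Z_7$-connected subgraph found via the partition bound. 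This step only produces a modulo $7$-orientation (for $\beta\equiv 0$), not strong $\Z_7$-connectedness, which is exactly why Theorem~\ref{7/3-flow-thm} is stated for flows rather than for all boundaries. Your sketch suggests a single induction that simultaneously handles all $\beta$ and lands at $16$; that would in fact prove something stronger than what is known.

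Second, the inductive machinery you describe---contract one side of a small cut, remove bundles of $\ge 7$ parallel edges---is too coarse. The engine in the paper is the weight function $\rho$ on \emph{all} partitions, together with a sequence of lower bounds ($\rho_G(\P)\ge 7$, then $\ge 12$, then $\ge 14$ for normal partitions) that are ratcheted up by repeatedly showing that a violation would produce a strongly $\Z_7$-connected induced subgraph. This is what certifies that after each lifting-and-contraction reduction the smaller graph still satisfies the inductive hypothesis and lies outside the finite exceptional family $\F$. Your proposal has no analogue of this mechanism, so there is no way to guarantee that the reduced graph in your ``structural reductions'' step remains in the class to which induction applies. Relatedly, the paper's discharging is face-only, driven by the inequality $\sum_f \ell(f)\le \tfrac{34}{15}|F|-\tfrac{2}{5}$ obtained from $\rho(G)\ge 0$ and Euler's formula; your vertex-and-face scheme with charges $d(v)-6$ and $2\ell(f)-6$ is a different setup, and you would need to say which reducible configurations it forces and verify their extendability, neither of which is indicated.
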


The dual version of Theorem~\ref{5/2-flow-thm}, on circular
coloring, was proved by Dvo\v{r}\'{a}k and Postle~\cite{DP2017}. In
fact, their coloring result holds for a larger class of graphs that includes
some sparse nonplanar graphs, as well as all planar graphs with girth at
least 10.
However, our proof is much shorter and avoids using computers for
case-checking.  Our proof also has new implications for antisymmetric
flows (see Theorem~\ref{antisymmetric-thm} below). 
Theorem~\ref{7/3-flow-thm} is especially interesting because
the counterexamples in~\cite{HLWZ} to Jaeger's original circular flow
conjecture are $12$-edge-connected nonplanar graphs that admit no circular
$7/3$-flow.

\subsection{Circular Flows and Modulo Orientations}
Graphs in this paper are finite and can have multiple edges, but no loops. Our notation is mainly
standard.  For a graph $G$, we write $|G|$ for
$|V(G)|$ and write $\|G\|$ for $|E(G)|$\aside{$|G|,\|G\|$}.
Let $\delta(G)$ denote the minimum degree in a graph $G$.  A $k$-vertex is a
vertex of degree $k$.
For disjoint vertex subsets $X$ and $Y$, let \Emph{$[X,Y]_G$} denote the set of edges
in $G$ with one endpoint in each of $X$ and $Y$.  Let $X^c=V(G)\setminus
X$\aaside{$X^c$, $d(X)$}{0mm}, and
let $d(X)=|[X,X^c]|$.  For vertices $v$ and $w$, let
$\mu(vw)=|[\{v\},\{w\}]_G|$ and $\mu(G)=\max_{v,w\in
V(G)}\mu(vw)$\aaside{$\mu(vw),\mu(G)$}{-4mm}.

To \Emph{lift} a pair of edges $w_1v$, $vw_2$ incident to a vertex $v$ in a
graph $G$ means to delete $w_1v$ and $vw_2$ and create a new edge $w_1w_2$.
 To \Emph{contract} an edge $e$ in $G$ means to identify its two endpoints and
then delete the resulting loop.  For a subgraph $H$ of $G$, we write $G/H$ to
denote the graph formed from $G$ by successively contracting the edges of
$E(H)$.  The lifting and contraction operations are used frequently in this
paper.

An orientation $D$ of a graph $G$ is a \EmphE{modulo $(2p+1)$-orientation}{-2mm}
if $d^+_D(v)-d^-_D(v)\equiv 0\pmod{2p+1}$ for each $v\in V(G)$.    By the
following lemma of Jaeger~\cite{Jaeger1988}, this problem is equivalent to
finding circular flows (for a short proof, see~\cite[Theorem 9.2.3]{Zhang-book}).
\begin{lem}\cite{Jaeger1988}
A graph admits a circular $(2+\frac{1}{p})$-flow if and only if it has a modulo $(2p+1)$-orientation.
\label{prop1}
\end{lem}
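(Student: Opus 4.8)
The plan is to route everything through $\Z_{2p+1}$-flows, which interpolate between integer circular flows and modulo orientations. Unwinding the definition with $a=2p+1$ and $b=p$, a circular $(2+\frac1p)$-flow is precisely an integer flow $\phi$ with $|\phi(e)|\in\{p,p+1\}$ for every edge $e$. Given such a $\phi$ (relative to some reference orientation), reduce it modulo $2p+1$; since $p+1\equiv -p\pmod{2p+1}$, the resulting $\Z_{2p+1}$-flow $\bar\phi$ takes only the values $p$ and $-p$, and in particular is nowhere zero. Orient each edge in the direction in which $\bar\phi$ reads $p$, and call the result $D$. Then the constant function $p$ is a $\Z_{2p+1}$-flow on $D$, so $p\big(d^+_D(v)-d^-_D(v)\big)\equiv 0\pmod{2p+1}$ for every vertex $v$; as $\gcd(p,2p+1)=1$, this forces $d^+_D(v)-d^-_D(v)\equiv 0\pmod{2p+1}$, i.e.\ $D$ is a modulo $(2p+1)$-orientation. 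That handles one implication.

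For the converse, let $D$ be a modulo $(2p+1)$-orientation and write $d^+_D(v)-d^-_D(v)=(2p+1)t_v$ with $t_v\in\Z$. We will build a circular flow by picking a set $S$ of arcs and assigning the value $p$ to each arc outside $S$ and the value $-(p+1)$ to each arc in $S$, all relative to $D$ (so arcs of $S$ effectively carry $p+1$ in the reverse direction). A direct computation shows that the net outflow at a vertex $v$ under this assignment equals $(2p+1)\big(p\,t_v-|S\cap\delta^+_D(v)|+|S\cap\delta^-_D(v)|\big)$, where $\delta^+_D(v)$ and $\delta^-_D(v)$ are the sets of arcs of $D$ leaving and entering $v$. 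Hence the assignment is a flow exactly when $\mathbf{1}_S$, viewed as a $\{0,1\}$-valued function on the arcs, has net outflow exactly $p\,t_v$ at each $v$; and since both $p$ and $-(p+1)$ have absolute value in $\{p,p+1\}$, any such flow is a circular $(2+\frac1p)$-flow.

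So the whole problem reduces to producing the set $S$, and this is the step we expect to be the main obstacle---not because it is hard, but because it is the one place that needs a genuine tool, namely a classical feasibility criterion for integral transshipments (Hoffman, Gale). The prescribed net outflows sum to $\frac{p}{2p+1}\sum_v\big(d^+_D(v)-d^-_D(v)\big)=0$, and for every vertex set $X$ the prescribed total outflow out of $X$ equals $\frac{p}{2p+1}\big(d^+_D(X)-d^-_D(X)\big)$, where $d^+_D(X)$ and $d^-_D(X)$ count the arcs of $D$ leaving and entering $X$. This quantity automatically lies in $[-d^-_D(X),\,d^+_D(X)]$, because the two required inequalities rearrange to $(p+1)d^+_D(X)+p\,d^-_D(X)\ge 0$ and $p\,d^+_D(X)+(p+1)d^-_D(X)\ge 0$. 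The feasibility criterion then supplies the integral set $S$, completing the proof. The two implications themselves are routine: they are just the reduction modulo $2p+1$ and this (now-justified) inverse.
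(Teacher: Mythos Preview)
The paper does not actually prove this lemma; it is stated with a citation to Jaeger~\cite{Jaeger1988} and a pointer to Zhang's book for a short proof, and is then used as a black box. So there is no in-paper argument to compare against.

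Your argument is correct and is essentially the standard one. The forward direction (reduce modulo $2p+1$, observe that $p+1\equiv -p$, and cancel the factor $p$ using $\gcd(p,2p+1)=1$) is clean. For the converse, your reduction to finding a $\{0,1\}$-valued ``flow'' $\mathbf{1}_S$ with prescribed net outflow $p\,t_v$ at each vertex, and the verification of Hoffman/Gale feasibility via the two trivial inequalities, is right. The only point worth making explicit is that the feasibility criterion a priori yields a solution with values in $[0,1]$; you then need integrality of the data (integer capacities $0,1$ and integer demands $p\,t_v$) together with the integrality theorem for network flows to get an actual subset $S$. You use this implicitly, but it is the step that turns ``feasible'' into ``integral'', so it is worth saying.
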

To prove our results, we study modulo orientations.
Let $G$ be a graph. A function $\beta: V(G) \mapsto \Z_{2p+1}$ is a 
\EmphE{$\Z_{2p+1}$-boundary}{-4mm} if $\sum_{v\in V(G)}\beta(v)\equiv
0\pmod{2p+1}$. Given a
$\Z_{2p+1}$-boundary $\beta$, a \EmphE{$(\Z_{2p+1},\beta)$-orientation}{2mm} is
an orientation $D$ such that $d_D^+(v)-d_D^-(v)\equiv \beta(v) \pmod{2p+1}$ for
each $v\in V(G)$. When such an orientation exists, we say that the boundary
$\beta$ is \EmphE{achievable}{-1mm}.  If $\beta(v)=0$ for all $v\in V(G)$, then a
$(\Z_{2p+1},\beta)$-orientation is simply a modulo $(2p+1)$-orientation.
As defined in \cite{Lai2007, Lai2014}, a graph $G$ is
\EmphE{strongly $\Z_{2p+1}$-connected}{-2mm} if for any $\Z_{2p+1}$-boundary
$\beta$, graph $G$ admits a $(\Z_{2p+1},\beta)$-orientation.
When the context is clear,  we may
simply write \EmphE{$\beta$-orientation}{4mm} for
$(\Z_{2p+1},\beta)$-orientation.  Suppose we are given a graph $G$, an
integer $p$, a $\Z_{2p+1}$-boundary $\beta$ for $G$, and a connected
subgraph $H\subsetneq G$.  We form $G'$ from $G$ by contracting $H$; that is $G'=G/H$.
 Let $w$ denote the new vertex in $G'$, formed by
contracting $E(H)$.  Define $\beta'$ for $G'$ by $\beta'(v)=\beta(v)$ for each
$v\in V(G')\setminus\{w\}$, and $\beta'(w)=\sum_{v\in V(H)}\beta(v) \pmod{2p+1}$.
Note that $\beta'$ is a $\Z_{2p+1}$-boundary for $G'$.  The motivation for
generalizing modulo orientations is the following observation of
Lai~\cite{Lai2007}, which is also applied in Thomassen et al.~\cite{Thomassen2012,LTWZ2013}.
\begin{lem}[\cite{Lai2007}]
\label{reduc-lem}
Let $G$ be a graph with a subgraph $H$, and let $G'=G/H$.  Let $\beta$ and
$\beta'$ be $\Z_{2p+1}$ boundaries (respectively) of $G$ and $G'$, as defined
above. If $H$ is strongly $\Z_{2p+1}$-connected, then
every $\beta'$-orientation of $G'$ can be extended to a  $\beta$-orientation of
$G$.  In particular, each of the following holds.
\begin{enumerate}
\item[(i)] If $H$ is strongly $\Z_{2p+1}$-connected and $G/H$ has a
modulo $(2p+1)$-orientation, then $G$ has a modulo $(2p+1)$-orientation.
\item[(ii)] If $H$ and $G/H$ are strongly $\Z_{2p+1}$-connected, then $G$ is
also strongly $\Z_{2p+1}$-connected.
\end{enumerate}
\end{lem}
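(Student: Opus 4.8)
The plan is to take a $\beta'$-orientation $D'$ of $G'=G/H$ and build from it a $\beta$-orientation of $G$. Write $w$ for the vertex of $G'$ obtained by contracting $H$. Every edge of $G$ not lying in $E(H)$ corresponds to a unique edge of $G'$, so orient each such edge of $G$ exactly as $D'$ orients the corresponding edge of $G'$; in particular the edges joining $V(H)$ to $V(G)\setminus V(H)$ (the \emph{external} edges) are now oriented, matching the orientations of the edges at $w$ in $D'$. For a vertex $v\in V(G)\setminus V(H)$ this already yields $d^+(v)-d^-(v)\equiv\beta'(v)=\beta(v)\pmod{2p+1}$, so such vertices need no further attention, and it remains only to orient the edges of $E(H)$ so that every $v\in V(H)$ gets the correct imbalance.

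For $v\in V(H)$, let $\sigma(v)$ be the imbalance at $v$ contributed by the already-oriented external edges at $v$ (out-degree minus in-degree among those edges). Define $\beta_H\colon V(H)\to\Z_{2p+1}$ by $\beta_H(v)\equiv\beta(v)-\sigma(v)\pmod{2p+1}$. If we can find an orientation $D_H$ of $H$ with $d^+_{D_H}(v)-d^-_{D_H}(v)\equiv\beta_H(v)\pmod{2p+1}$ for each $v\in V(H)$, then overlaying $D_H$ on the external orientation completes the construction: for $v\in V(H)$ the total imbalance becomes $\bigl(d^+_{D_H}(v)-d^-_{D_H}(v)\bigr)+\sigma(v)\equiv\beta_H(v)+\sigma(v)=\beta(v)\pmod{2p+1}$, as needed.

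The one step requiring care is checking that $\beta_H$ is a legitimate $\Z_{2p+1}$-boundary of $H$, i.e.\ that $\sum_{v\in V(H)}\beta_H(v)\equiv 0\pmod{2p+1}$; once this is known, strong $\Z_{2p+1}$-connectedness of $H$ supplies the desired $D_H$. Summing, $\sum_{v\in V(H)}\beta_H(v)\equiv\sum_{v\in V(H)}\beta(v)-\sum_{v\in V(H)}\sigma(v)$. The external edges at the vertices of $H$ are exactly the edges incident to $w$ in $G'$, and each contributes $+1$ to precisely one term $\sigma(v)$ when $D'$ orients it out of $w$ and $-1$ when $D'$ orients it into $w$; hence $\sum_{v\in V(H)}\sigma(v)=d^+_{D'}(w)-d^-_{D'}(w)\equiv\beta'(w)=\sum_{v\in V(H)}\beta(v)\pmod{2p+1}$. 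Therefore $\sum_{v\in V(H)}\beta_H(v)\equiv 0\pmod{2p+1}$, completing the extension; this bookkeeping with the boundary at $w$ is the heart of the argument.

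Finally, the two itemized statements are immediate corollaries. For (i), take $\beta\equiv 0$ on $G$; then $\beta'\equiv 0$ on $G/H$, so a modulo $(2p+1)$-orientation of $G/H$ is a $\beta'$-orientation, and its extension is a modulo $(2p+1)$-orientation of $G$. For (ii), given any $\Z_{2p+1}$-boundary $\beta$ of $G$, form $\beta'$ as in the setup; since $G/H$ is strongly $\Z_{2p+1}$-connected it admits a $\beta'$-orientation, which extends to a $\beta$-orientation of $G$ by the first part, and since $\beta$ was arbitrary, $G$ is strongly $\Z_{2p+1}$-connected.
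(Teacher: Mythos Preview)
Your proof follows essentially the same approach as the paper's: pull back the $\beta'$-orientation to the edges outside $H$, compute the residual boundary on $V(H)$, and invoke strong $\Z_{2p+1}$-connectedness of $H$ to finish. You even supply the verification that the residual function is a $\Z_{2p+1}$-boundary, which the paper leaves as ``easy to check.''

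There is one small gap. You assert that every edge of $G$ not lying in $E(H)$ corresponds to a unique edge of $G'$, but this fails for edges in $E(G[V(H)])\setminus E(H)$: an edge between two vertices of $V(H)$ that is not itself in $H$ becomes a loop under contraction and is discarded, so it has no counterpart in $G'$ and your construction leaves it unoriented. The paper handles exactly this by orienting each edge of $E(G[V(H)])\setminus E(H)$ arbitrarily before computing the residual boundary. Your argument is repaired the same way: orient those edges arbitrarily, absorb their contribution into $\sigma(v)$, and note that their net contribution to $\sum_{v\in V(H)}\sigma(v)$ is zero (each such edge contributes $+1$ at one endpoint and $-1$ at the other), so the boundary check goes through unchanged.
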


\begin{proof}
We prove the first statement, since it implies (i) and (ii).  Fix a
$\beta'$-orientation of $G'$.  This yields an orientation $D$ of the subgraph
$G-E(G[V(H)])$. By orienting arbitrarily each edge in $E(G[V(H)])\setminus
E(H)$, we obtain a $\beta''$-orientation $D_1$ of $G-E(H)$, for some $\beta''$.
 For each $v\in V(H)$, let $\gamma(v)=\beta(v)-\beta''(v)$.  It is easy to
check that $\gamma$ is a $\Z_{2p+1}$-boundary of $H$.  Since $H$ is strongly
$\Z_{2p+1}$-connected, $H$ has a $\gamma$-orientation $D_2$.  Hence $D_1\cup
D_2$ is a $\beta$-orientation of $G$.
\end{proof}

~

\noindent
{\bf Proof Outline for Main Results.} To prove Theorems~\ref{5/2-flow-thm} and \ref{7/3-flow-thm},
we actually establish two stronger, more technical results on orientations;
namely, we prove Theorems~\ref{THM: Main1} and \ref{THM: Main2}.
Lemma~\ref{reduc-lem} shows that strongly $\Z_{2p+1}$-connected graphs are
contractible configurations when we are looking for modulo orientations. To
prove Theorems~\ref{THM: Main1} and \ref{THM: Main2},
we use lifting and contraction operations to find many more reducible
configurations. These configurations eventually facilitate a discharging proof.
The proofs of Theorems~\ref{5/2-flow-thm} and~\ref{7/3-flow-thm} are similar,
though the latter is harder.  In the next section we just discuss
Theorem~\ref{5/2-flow-thm}, but most of the key ideas are reused in the proof of
Theorem~\ref{7/3-flow-thm}.

\section{Circular $5/2$-flows: Proof of Theorem~\ref{5/2-flow-thm}}
\label{Z5-sec}
\subsection{Modulo $5$-Orientations and Antisymmetric $\Z_5$-flows}

To prove Theorem~\ref{5/2-flow-thm},
we will first present a more technical result, Theorem~\ref{THM: Main1},
which yields Theorem~\ref{5/2-flow-thm} as an easy corollary
(as we show below in Theorem~\ref{10-edge-thm}).
The hypothesis in Theorem~\ref{THM: Main1} uses a weight function $w$, which is
motivated by the following Spanning Tree Packing Theorem of
Nash-Williams \cite{Nash1961} and Tutte \cite{Tutte1961}: 
{\em a graph $G$ has $k$ edge-disjoint spanning trees if and
only if every partition ${\mathcal P}=\{P_1, P_2,\dots, P_t\}$ satisfies
$\sum_{i=1}^{t}d(P_i)-2k(t-1)\ge 0$.}  This condition is necessary,
since in a partition with $t$ parts, each spanning tree has at least $t-1$ edges
between parts.  It is shown in~\cite[Proposition~3.9]{LaLL17}  that if $G$
is strongly $\Z_{2p+1}$-connected, then it contains $2p$ edge-disjoint spanning
trees (although this necessary condition is not always sufficient).  To capture
this idea, we define the following weight function.

\begin{definition}
\label{DEF: partition}
Let ${\mathcal P}=\{P_1, P_2,\dots, P_t\}$ be a partition of $V(G)$. Let
$$w_G({\mathcal P})=\sum_{i=1}^{t}d(P_i)-11t+19$$ and
$$w(G)=\min\{w_G({\mathcal P}): {\mathcal P}\ is\ a\ partition\ of\ V(G)\}.$$
\end{definition}

Let \Emph{$T_{a,b,c}$} denote a 3-vertex graph (triangle) with its pairs of vertices
joined by $a$, $b$, and $c$ parallel edges; let \Emph{$aH$} denote the graph
formed from $H$ by replacing each edge with $a$ parallel
edges. 
For example,  $w(3K_2)=3$, $w(2K_2)=1$, $w(T_{2,2,3})=w(T_{1,3,3})=0$; see
Figure~\ref{FIG: K23J12}.  For each of these four graphs the minimum in the
definition of $w(G)$ is attained only by the partition with each vertex in its
own part.
We typically assume $V(T_{a,b,c})=\{v_1,v_2,v_3\}$ and $d(v_1)\le d(v_2)\le
d(v_3)$.

\begin{figure}[t]

\setlength{\unitlength}{0.08cm}

\begin{center}

\begin{picture}(150,30)
\put(0,10){\circle*{2}}\put(20,10){\circle*{2}}
\qbezier(0, 10)(0, 10)(20, 10)\qbezier(0, 10)(10, 15)(20, 10)\qbezier(0, 10)(10, 5)(20, 10)
\put(5,-7){\footnotesize{$3K_2$}}

\put(40,10){\circle*{2}}\put(60,10){\circle*{2}}
\qbezier(40, 10)(50, 15)(60, 10)\qbezier(40, 10)(50, 5)(60, 10)
\put(46,-7){\footnotesize{$2K_2$}}

\put(80,5){\circle*{2}}\put(100,5){\circle*{2}}\put(90,25){\circle*{2}}
\qbezier(80, 5)(90, 10)(100, 5)\qbezier(80, 5)(90, 5)(100, 5)\qbezier(80, 5)(90, 0)(100, 5)
\qbezier(80, 5)(85, 11)(90, 25)\qbezier(80, 5)(84, 17)(90, 25)
\qbezier(100, 5)(95, 10)(90, 25)\qbezier(100, 5)(94, 21)(90, 25)
\put(86,-7){\footnotesize{$T_{2,2,3}$}}

\put(120,5){\circle*{2}}\put(140,5){\circle*{2}}\put(130,25){\circle*{2}}
\qbezier(120, 5)(130, 10)(140, 5)\qbezier(120, 5)(130, 5)(140, 5)\qbezier(120, 5)(130, 0)(140, 5)
\qbezier(120, 5)(125, 15)(130, 25)
\qbezier(140, 5)(132, 10)(130, 25)\qbezier(140, 5)(137, 22)(130, 25)\qbezier(140, 5)(135, 15)(130, 25)
\put(127,-7){\footnotesize{$T_{1,3,3}$}}

\end{picture}
\end{center}
\vspace{0.4cm}
\caption{The graphs $3K_2, 2K_2, T_{2,2,3}, T_{1,3,3}$.}
\label{FIG: K23J12}
\end{figure}
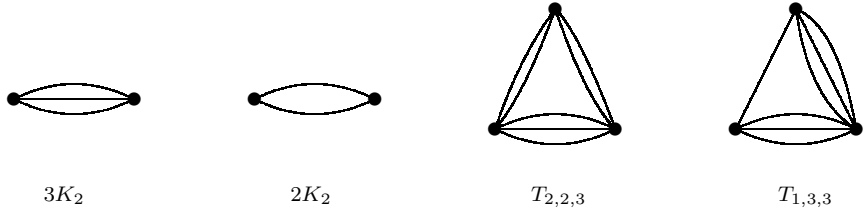

Let $\T=\{2K_2, 3K_2, T_{2,2,3}, T_{1,3,3}\}$. Each graph $G\in \T$ (see Figure~\ref{FIG: K23J12}) is not strongly $\Z_5$-connected,
since there exists some $\Z_5$-boundary $\beta$ for which $G$ has no
$\beta$-orientation.  A short case analysis shows that none of the
following boundaries are achievable.
For $3K_2$, let $\beta(v_1)=\beta(v_2)=0$.  For
$2K_2$, let $\beta(v_1)=1$ and $\beta(v_2)=4$.
For $T_{2,2,3}$, let $\beta(v_1)=1$ and $\beta(v_2)=\beta(v_3)=2$.
For $T_{1,3,3}$, let $\beta(v_1)=\beta(v_2)=1$ and $\beta(v_3)=3$.

Now suppose that $G$ has a partition $\P$ such that $G/\P\in\T$,
where the vertices in each $P_i$ are identified to form $v_i$.
To construct a $\Z_5$-boundary $\gamma$ for
which $G$ has no $\gamma$-orientation, we assign boundary  $\gamma$
so that $\sum_{v\in P_i}\gamma(v)\equiv \beta(v_i)$.  Hence $G$ has
no $\gamma$-orientation precisely because $G/\P$ has no
$\beta$-orientation.
%
%
We call a partition $\P$ \EmphE{troublesome}{-4mm} if  $G/\P\in \T=\{2K_2, 3K_2, T_{2,2,3}, T_{1,3,3}\}$.  The main result of Section~\ref{Z5-sec} is Theorem~\ref{THM: Main1}.
\begin{theorem}
\label{THM: Main1}
Let $G$ be a planar graph and $\beta$ be a $\Z_5$-boundary of $G$. If $w(G)\ge
0$, then $G$ admits a $(\Z_5,\beta)$-orientation, unless $G$ has a troublesome partition.
\end{theorem}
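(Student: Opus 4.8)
The plan is to argue by contradiction, taking a counterexample $G$ minimizing $\|G\|+|G|$ (or some similar measure), and to derive structural restrictions on $G$ that are eventually incompatible with planarity via discharging. First I would record the basic reduction tools: by Lemma~\ref{reduc-lem}, if $H\subsetneq G$ is strongly $\Z_5$-connected then we may contract $H$, so $G$ contains no nontrivial strongly $\Z_5$-connected subgraph; in particular $G$ has no two parallel edges that are part of a larger strongly $\Z_5$-connected piece, and $G$ is ``$\Z_5$-reduced.'' I would also want lifting lemmas: if $v$ is a low-degree vertex (degree $4$ in the relevant regime, since we are chasing a modulo-$5$ orientation and $5$-edge-connectivity-type thresholds), then for a suitable pairing of the edges at $v$ one can lift two edges and adjust $\beta$ so that a $\beta$-orientation of the smaller graph extends; the obstruction to such a lift creating a forbidden configuration (a member of $\T$, a loop, or a drop below the $w\ge 0$ threshold) is exactly what forces $G$ to be dense around $v$. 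The key quantitative input is the weight function $w$: I would show $w$ is well-behaved under contraction of strongly $\Z_5$-connected subgraphs and under lifting, so that the hypothesis $w(G)\ge 0$ is inherited (or controlled) by the graphs produced by reductions; this is where the constants $11$ and $19$ in Definition~\ref{DEF: partition} must be matched precisely to the sizes of the graphs in $\T$ and to the edge count needed so that $2p=4$ edge-disjoint spanning trees (or the appropriate fractional version) survive.

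Next I would translate $w(G)\ge 0$ and ``$G$ is a minimal counterexample'' into a lower bound on the minimum degree and on local edge densities. The partition with every vertex singleton gives $\sum_v d(v)\ge 11|G|-19$, i.e. $\|G\|\ge \tfrac{11|G|-19}{2}$, so $G$ has average degree just under $11$; more importantly, taking a partition that isolates a small vertex set $S$ and lumps the rest shows $d(S)\ge 11$ whenever $|S^c|\ge 2$, i.e. $G$ is essentially $11$-edge-connected except for small sides, which in turn means $\delta(G)\ge 5$ (a $4$-vertex would be liftable, and a vertex of degree $\le 3$ would give a small edge cut). Then the heart of the argument: rule out, one at a time, the configurations a minimal $G$ could contain — a $5$-vertex adjacent to another low-degree vertex, two adjacent $5$-vertices with a common neighbor, a triangle with all vertices of degree close to $5$, $5$-vertices joined by multiple edges to the same neighbor, etc. Each is eliminated either by exhibiting a strongly $\Z_5$-connected subgraph to contract (using known $\Z_5$-connectivity facts about dense small graphs, e.g.\ that $T_{a,b,c}$ with large enough $a,b,c$ is strongly $\Z_5$-connected), or by a lift that reduces $\|G\|$ while preserving $w\ge 0$ and not creating a troublesome partition; the troublesome-partition escape hatch in the statement is precisely what catches the cases where a lift would produce a member of $\T$.

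Finally, with the list of reducible configurations in hand, I would run a discharging argument on a plane embedding of $G$. Assign each vertex $v$ charge $d(v)-\tfrac{22}{?}\cdot$(something) — in practice one chooses initial charges so that Euler's formula $\sum_v(\text{ch}(v))+\sum_f(\text{ch}(f))<0$ holds while the reducibility results force every vertex (and face) to end with nonnegative charge, a contradiction. Concretely, since $G$ is $\Z_5$-reduced and essentially $11$-edge-connected, faces are large or have controlled multiplicity, and the forbidden configurations say $5$-vertices are ``spread out'' and have many high-degree neighbors, so high-degree vertices can absorb the deficiency of nearby $5$-vertices. I expect the main obstacle to be the lifting step and its interaction with $w$: one must verify that after a lift the new graph still satisfies $w\ge 0$ \emph{and} has no troublesome partition (or, if it does, that this troublesome partition pulls back to one in $G$, giving the allowed exceptional conclusion). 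Handling the borderline partitions — those achieving $w_G(\mathcal P)=0$ with $G/\mathcal P$ just one step away from $\T$ — and showing they are exactly the members of $\T$ after all reductions, is the delicate bookkeeping that the constants $11$ and $19$ are engineered to make work; getting that case analysis complete and clean, rather than the discharging per se, is where the real effort lies.
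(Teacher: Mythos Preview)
Your overall architecture (minimal counterexample, reducibility via contraction and lifting, then discharging) matches the paper, but the execution misses the central technical device, and several of your numerical guesses are wrong.

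\textbf{The missing idea: bootstrapping partition weights.} You treat $w(G)\ge 0$ as directly giving degree and edge-cut bounds, writing that ``$d(S)\ge 11$ whenever $|S^c|\ge 2$'' and ``$\delta(G)\ge 5$.'' Neither follows from $w(G)\ge 0$ alone: applying the definition to $\{S,S^c\}$ gives only $2d(S)-22+19\ge 0$, i.e.\ $d(S)\ge 2$. The paper instead proves a sequence of increasingly strong lower bounds on $w_G(\P)$ for nontrivial and normal partitions ($\ge 5$, then $\ge 8$, then $\ge 9$, $\ge 10$, $\ge 13$) by the following self-strengthening loop: if $w_G(\P)$ were too small, then Lemma~\ref{mod5-key-lem} forces $w_H(\Q)$ to be large for every partition $\Q$ of $H=G[P_1]$, which by the induction hypothesis (Lemma~\ref{LEM: smallH}) makes $H$ strongly $\Z_5$-connected, contradicting Claim~\ref{CL: nostrongZ5}. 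This bootstrap is what actually yields 6-edge-connectivity (not 5 or 11), and what gives you the slack to absorb the weight loss when you lift one or two pairs of edges. Without it, you cannot certify that the graph $G'$ obtained after a lifting reduction still has $w(G')\ge 0$ (or $\ge 1$, or $\ge 4$, as needed) for the induction to apply.

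\textbf{Wrong focus for reducibility and discharging.} There are no degree-4 or degree-5 vertices to lift: $\delta(G)\ge 6$. The reducible configurations are not ``a 5-vertex adjacent to another low-degree vertex'' but specific small multigraphs: $4K_2$, $T_{2,3,3}$, $2K_4$, $3C_4$ are strongly $\Z_5$-connected, and then $T_{1,1,3}$, $3C_4\dit$, $T_{2,3,3}\diit$ are handled by lifting reductions whose validity depends on the partition-weight bounds above. Correspondingly, the discharging is on \emph{faces}, not vertices: each face $f$ gets initial charge $\ell(f)$, Euler's formula plus $w(G)\ge 0$ gives total charge $<\frac{22}{9}|F(G)|$, and the rules push charge from $3^+$-faces to 2-faces and among 3-faces through strings of parallel edges. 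Your vertex-based discharging with vague ``$d(v)-\tfrac{22}{?}$'' is not how the argument goes, and your configuration list does not match the face-adjacency structure that the paper's rules exploit.
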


\smallskip

Before proving Theorem~\ref{5/2-flow-thm}, we prove a slightly
weaker result, assuming the truth of Theorem~\ref{THM: Main1}.

\begin{thm}\label{11-edge-thm}
If $G$ is an 11-edge-connected planar graph, then $G$ is strongly
$\Z_5$-connected.
\end{thm}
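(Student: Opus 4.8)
The plan is to read this off directly from Theorem~\ref{THM: Main1}. Since $G$ is planar, it suffices to verify the two hypotheses of that theorem for an $11$-edge-connected planar graph $G$, namely that $w(G)\ge 0$ and that $G$ has no troublesome partition; Theorem~\ref{THM: Main1} then produces a $(\Z_5,\beta)$-orientation for every $\Z_5$-boundary $\beta$, which is exactly the statement that $G$ is strongly $\Z_5$-connected. Both hypotheses should come out of the single observation that, in an $11$-edge-connected graph, $d(X)\ge 11$ for every proper nonempty $X\subseteq V(G)$.

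First I would check that $w(G)\ge 0$. Let $\P=\{P_1,\dots,P_t\}$ be an arbitrary partition of $V(G)$, so $w_G(\P)=\sum_{i=1}^t d(P_i)-11t+19$. If $t=1$, the only part is $V(G)$ and $w_G(\P)=0-11+19=8$. If $t\ge 2$, then each $P_i$ is a proper nonempty subset of $V(G)$, so $d(P_i)\ge 11$ and hence $\sum_{i=1}^t d(P_i)\ge 11t$, giving $w_G(\P)\ge 11t-11t+19=19$. Minimizing over all partitions, $w(G)\ge 8>0$.

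Next I would rule out troublesome partitions. Suppose for contradiction that $G$ has a partition $\P$ with $G/\P\in\T=\{2K_2,3K_2,T_{2,2,3},T_{1,3,3}\}$. Every graph in $\T$ has at least two vertices and maximum degree at most $6$: the vertex degrees are $2,2$ in $2K_2$; $3,3$ in $3K_2$; $4,5,5$ in $T_{2,2,3}$; and $4,4,6$ in $T_{1,3,3}$. On the other hand, the degree in $G/\P$ of the vertex obtained by contracting $P_i$ equals $d(P_i)$, and since $G/\P$ has at least two vertices, each $P_i$ is a proper nonempty subset of $V(G)$, so $d(P_i)\ge 11>6$, a contradiction. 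Hence $G$ has no troublesome partition, and Theorem~\ref{THM: Main1} applies, completing the proof.

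Given Theorem~\ref{THM: Main1}, there is no genuine obstacle here; the argument is a short counting check. It is worth noting, though, that the constant $11$ is precisely what makes the bound $w(G)\ge 0$ hold for all partitions simultaneously: for a $10$-edge-connected planar multigraph such as the $5$-fold cycle $5C_n$ with $n\ge 20$, the all-singletons partition already yields $w(G)<0$, so Theorem~\ref{THM: Main1} cannot be invoked directly there. Extracting the sharper Theorem~\ref{5/2-flow-thm} from Theorem~\ref{THM: Main1} (via Theorem~\ref{10-edge-thm}) therefore requires the lifting and contraction reductions rather than this blunt estimate, even though only $d(P_i)\ge 7$ is needed to exclude troublesome partitions.
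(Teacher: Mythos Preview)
Your proof is correct and follows essentially the same approach as the paper: verify $w(G)\ge 0$ via the bound $d(P_i)\ge 11$, then rule out troublesome partitions and invoke Theorem~\ref{THM: Main1}. The only cosmetic difference is that the paper rules out troublesome partitions by noting $w_G(\P)=w(G/\P)\le 3$ for any such $\P$ (contradicting $w_G(\P)\ge 19$), whereas you compare vertex degrees directly; your handling of the $t=1$ case is in fact slightly more careful than the paper's.
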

\begin{proof} 
Let $G$ be an 11-edge-connected planar graph.  Fix a partition $\P$.  Since $G$
is 11-edge-connected, $d(P_i)\ge 11$ for each $i$, which implies $w_G(\P)\ge 19$.
Thus $w(G)\ge 19$.  Since it is easy to see  each troublesome partition $\P$ has $w(G/\P)\le
3$,  we obtain that $G$ has no partition $\P$ such that $G/\P$ is troublesome.  Now
Theorem~\ref{THM: Main1} implies that $G$ is strongly $\Z_5$-connected.
\end{proof}

An \Emph{antisymmetric $\Z_5$-flow} in a directed graph
$D=D(G)$ is a $\Z_5$-flow  such that no two edges have flow values
summing to 0.  One example is any $\Z_5$-flow that uses only values 1 and 2.
Esperet, de Verclos, Le, and Thomass\'{e} \cite{EJLT2017} proved that if a
graph $G$ is strongly $\Z_5$-connected, then every orientation $D(G)$ of $G$
admits an antisymmetric $\Z_5$-flow.  Together with work of Lov\'asz et
al.~\cite{LTWZ2013}, this implies that every directed
$12$-edge-connected graph admits an antisymmetric $\Z_5$-flow.
Esperet et~al.~\cite{EJLT2017} conjectured the stronger result that
{\em every directed $8$-edge-connected  graph  admits an antisymmetric
$\Z_5$-flow}.
The concept of antisymmetric flows and its dual, homomorphisms to oriented
graphs, were introduced by Ne\v{s}et\v{r}il
and Raspaud \cite{NR1999}. In \cite{NRS1997}, Ne\v{s}et\v{r}il, Raspaud and
Sopena showed that every orientation of a planar graph of girth
at least $16$ has a homomorphism to an oriented simple graph on at most 5
vertices. The girth condition is reduced to $14$ in \cite{BKNRS1999}, to $13$
in \cite{BKKW2004}, and finally to $12$ in \cite{BIK2007}.  By duality,
the results of
\cite{NR1999}, 
\cite{EJLT2017}, and 
\cite{LTWZ2013} combine to imply that girth $12$ suffices.
After the girth $12$ result of  Borodin et al.~\cite{BIK2007} in 2007,
Esperet et al.~\cite{EJLT2017} remarked that ``it is not known whether the same
holds for planar graphs of girth at least $11$.''
Note that the result of Dvo\v{r}\'{a}k and Postle~\cite{DP2017} does not seem
 to apply to homomorphisms to oriented graphs.  By Theorem~\ref{11-edge-thm}, we improve
this girth bound for planar graphs.

\begin{thm}
\label{antisymmetric-thm}
Every directed $11$-edge-connected planar graph admits an antisymmetric
$\Z_5$-flow. Dually, every orientation of a planar graph of girth at least $11$
has a homomorphism to an oriented simple graph on at most 5 vertices.
\end{thm}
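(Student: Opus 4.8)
The plan is to deduce Theorem~\ref{antisymmetric-thm} from Theorem~\ref{11-edge-thm} together with the two external results cited just above its statement, namely the theorem of Esperet, de Verclos, Le, and Thomass\'{e}~\cite{EJLT2017} (if $G$ is strongly $\Z_5$-connected, then every orientation of $G$ admits an antisymmetric $\Z_5$-flow) and the flow/coloring duality on planar graphs. First I would handle the primal statement: let $G$ be an arbitrary directed graph whose underlying graph is an $11$-edge-connected planar graph. By Theorem~\ref{11-edge-thm}, the underlying graph is strongly $\Z_5$-connected; hence, applying the result of~\cite{EJLT2017} to the given orientation $D(G)$, we obtain an antisymmetric $\Z_5$-flow. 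That is essentially the whole argument for the first sentence — it is a one-line corollary once Theorem~\ref{11-edge-thm} is in hand.

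For the dual statement, I would first recall the standard correspondence (from Ne\v{s}et\v{r}il--Raspaud~\cite{NR1999}): for a planar graph $G$ with a fixed planar embedding and an orientation, an antisymmetric $\Z_5$-flow on $G$ corresponds, via planar duality, to a homomorphism of the oriented dual to some fixed oriented graph on at most $5$ vertices (the ``universal'' target coming from the nonzero antisymmetric pairs in $\Z_5$, which can be taken to be an oriented simple graph on $5$ vertices). So an orientation of a planar graph of girth at least $11$ dualizes to an orientation of an $11$-edge-connected planar (multi)graph, to which the primal statement applies, and translating back yields the desired homomorphism. The key point to be careful about here is that duality turns girth into edge-connectivity: a planar graph of girth $\ge g$ has a planar dual that is $g$-edge-connected (shortest cycles $\leftrightarrow$ minimum edge cuts), and this is exactly why the girth $11$ hypothesis is the right one to match Theorem~\ref{11-edge-thm}.

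The main obstacle — really the only subtle point, since the heavy lifting is in Theorem~\ref{11-edge-thm} — is making the duality bookkeeping precise: one must check that the target oriented graph in~\cite{NR1999} genuinely has at most $5$ vertices, that loops/parallel edges on the dual side do not cause trouble (loops in the dual correspond to bridges, which are excluded once we know $G$ is $11$-edge-connected, and parallel edges are harmless for the flow statement), and that ``every orientation'' on the girth side really does correspond to ``every orientation'' on the edge-connectivity side under the duality. Once these routine compatibility checks are dispatched, the proof is complete. I would write it as: \emph{Apply Theorem~\ref{11-edge-thm} and the result of Esperet et al.~\cite{EJLT2017} for the first sentence; apply planar duality together with~\cite{NR1999} for the second.}
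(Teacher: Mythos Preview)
Your proposal is correct and matches the paper's approach exactly: the paper does not even write out a formal proof, stating the theorem immediately after remarking ``By Theorem~\ref{11-edge-thm}, we improve this girth bound for planar graphs,'' since the first sentence follows from Theorem~\ref{11-edge-thm} plus the result of Esperet et~al.~\cite{EJLT2017}, and the second from the duality of Ne\v{s}et\v{r}il--Raspaud~\cite{NR1999}. Your careful remarks about the duality bookkeeping (girth $\leftrightarrow$ edge-connectivity, the $5$-vertex target, loops/bridges) are appropriate but go beyond what the paper spells out.
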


A graph $G$ has \Emph{odd edge-connectivity} $t$ if the smallest edge cut of
odd size has size $t$.  Our strongest result on modulo $5$-orientations is the
following, which includes Theorem~\ref{5/2-flow-thm} as a special case.

\begin{theorem}
Every odd-$11$-edge-connected planar graph admits a modulo 5-orientation.
In particular, every 10-edge-connected planar graph admits a modulo
5-orientation (and thus a circular 5/2-flow).
\label{10-edge-thm}
\end{theorem}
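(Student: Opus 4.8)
The plan is to derive Theorem~\ref{10-edge-thm} from Theorem~\ref{THM: Main1} by a reduction that resembles the proof of Theorem~\ref{11-edge-thm}, but now working with the odd-edge-connectivity hypothesis rather than ordinary edge-connectivity, and aiming only for a modulo $5$-orientation (boundary $\beta \equiv 0$) rather than full strong $\Z_5$-connectivity. First I would observe that the ``in particular'' clause is immediate once the main statement is proved: if $G$ is $10$-edge-connected, then every odd-size edge cut has size at least $11$ (since $10$ is even, a cut of size exactly $10$ cannot be an odd cut, and anything smaller is excluded by $10$-edge-connectivity), so $G$ is odd-$11$-edge-connected; and a modulo $5$-orientation yields a circular $5/2$-flow by Lemma~\ref{prop1}. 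So the whole content is: an odd-$11$-edge-connected planar graph $G$ has $w(G) \ge 0$ and no troublesome partition, hence by Theorem~\ref{THM: Main1} (applied with $\beta \equiv 0$) it has a modulo $5$-orientation.

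The key step is the weight estimate. Let $\P = \{P_1, \dots, P_t\}$ be any partition of $V(G)$. I want to show $\sum_i d(P_i) - 11t + 19 \ge 0$. The subtlety is that some $d(P_i)$ could be as small as $10$ (an even cut), so the crude bound $d(P_i) \ge 11$ used in Theorem~\ref{11-edge-thm} is no longer available; I can only say $d(P_i) \ge 10$ in general and $d(P_i) \ge 11$ when $d(P_i)$ is odd. The parity trick is that $\sum_i d(P_i)$ counts each edge in $[P_i, P_j]$ twice, hence $\sum_i d(P_i)$ is even; therefore the number of indices $i$ with $d(P_i)$ odd is even, so at most all but an even count are the ``bad'' value $10$ — more usefully, if every $d(P_i) \in \{10, 11\}$ then an even number of them equal $11$. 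For $t = 1$ the partition is trivial ($d(P_1) = 0$) and $w_G(\P) = 0 - 11 + 19 = 8 \ge 0$. For $t \ge 2$: if all $d(P_i) = 10$ is impossible since then $\sum d(P_i) = 10t$ could be fine parity-wise, so I actually need a different observation — here is the cleaner route. Since $G$ is connected and $t \ge 2$, $\sum_i d(P_i) \ge 2 \cdot (\text{min cut}) \ge \dots$; more carefully, I will use that $\sum_{i} d(P_i) \ge 11t - c$ where $c$ counts parts with even degree-cut, each contributing a deficiency of at most $1$ (since even cuts have size $\ge 10$, odd cuts have size $\ge 11$, so $d(P_i) \ge 11 - [d(P_i)\text{ even}]$). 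Thus $\sum_i d(P_i) \ge 11t - (\#\{i : d(P_i) \text{ even}\})$. Combined with the parity fact that $\sum_i d(P_i)$ is even, and a contraction argument (if many parts have small even cuts, contract to reach a small graph and check directly), I obtain $w_G(\P) \ge 19 - (\text{number of even-cut parts})$, which is nonnegative unless there are at least $19$ even-cut parts; that large-$t$ regime is where one must work, and I expect it to be handled by showing $\sum_i d(P_i) \ge 11t - 19$ still holds because the ``$10$-cuts'' cannot all be disjoint in a planar connected graph without forcing $\sum d(P_i)$ up — essentially each deficiency of $1$ at one part is compensated elsewhere.

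The main obstacle, then, is precisely this weight inequality $w(G) \ge 0$ for odd-$11$-edge-connected planar graphs, i.e.\ controlling partitions with many parts of even cut-size exactly $10$. I anticipate the argument runs as follows: among $P_1, \dots, P_t$, say $s$ of them have $d(P_i) = 10$ (the only way to be deficient) and the remaining $t - s$ have $d(P_i) \ge 11$; then $\sum_i d(P_i) \ge 10s + 11(t - s) = 11t - s$, so $w_G(\P) \ge 19 - s$, and I must rule out $s \ge 19$. For this I would use that the parts with $d(P_i) = 10$, together with the edges leaving them, form a structure that after contracting all other parts to single vertices leaves a planar multigraph $G'$ in which $s$ vertices have degree $10$; a counting/Euler-formula argument in the planar quotient (each such degree-$10$ vertex contributes $10$ to $2\|G'\|$, and planarity bounds $\|G'\|$ in terms of $|G'|$) forces a contradiction with $s$ large, or else reveals that $G/\P$ is one of a few small graphs — and we separately check none of those small graphs is troublesome, or that if $G/\P$ were troublesome it would have $w \le 3 < 19 \le w(G)$, contradiction. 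Finally, having established $w(G) \ge 0$, I note any troublesome partition $\P$ has $G/\P \in \{2K_2, 3K_2, T_{2,2,3}, T_{1,3,3}\}$, each of which has maximum degree at most $3$; but in $G/\P$ every vertex $v_i$ has degree $d(P_i) \ge 10$ (even) or $\ge 11$ (odd), so $G/\P$ cannot be in $\T$, ruling out troublesome partitions. Then Theorem~\ref{THM: Main1} with $\beta \equiv 0$ delivers the modulo $5$-orientation, and Lemma~\ref{prop1} converts it to a circular $5/2$-flow, completing the proof.
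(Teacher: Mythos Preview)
Your approach has a fundamental gap: the claim that every odd-$11$-edge-connected planar graph $G$ satisfies $w(G)\ge 0$ is simply false, and no Euler-formula patch will rescue it. Take $G=5C_n$, the $n$-cycle with each edge replaced by five parallel edges. Going around the cycle, any set $S$ meets its complement across an even number of consecutive pairs, each contributing $5$ edges, so every edge cut of $5C_n$ has size a multiple of $10$; thus $G$ has no odd edge cuts at all and is odd-$k$-edge-connected for every $k$ (and is in fact $10$-edge-connected, so even the ``in particular'' case is covered). It is obviously planar. Yet the trivial partition gives $w_G(\P)=10n-11n+19=19-n$, which is negative once $n\ge 20$. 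Your proposed planarity argument---that a planar quotient cannot have many degree-$10$ vertices---is just wrong for multigraphs: $5C_n$ itself is a planar multigraph with $n$ vertices all of degree $10$. A secondary error: you repeatedly assert $d(P_i)\ge 10$ for even-size cuts, but odd-$11$-edge-connectivity says nothing whatsoever about even cuts; an Eulerian graph with a $2$-edge cut is odd-$k$-edge-connected for every $k$.

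The paper proceeds entirely differently. It takes a counterexample $G$ minimizing $\|G\|$ and applies Zhang's Splitting Lemma for odd edge-connectivity: at any vertex $v$ with $d(v)\ne 11$ one can lift a consecutive pair of edges while preserving planarity and odd-$11$-edge-connectivity, and a modulo $5$-orientation of the smaller graph lifts back to one of $G$. Minimality therefore forces $\delta(G)\ge 11$. If $G$ is now $11$-edge-connected, Theorem~\ref{11-edge-thm} finishes. Otherwise choose a \emph{smallest} set $W\subset V(G)$ with $d(W)<11$; minimality of $W$ gives $d_G(W')\ge 11$ for every proper $W'\subsetneq W$, and a two-line computation then shows $w_{G[W]}(\Q)\ge 9$ for every partition $\Q$ of $W$, so $H=G[W]$ is strongly $\Z_5$-connected by Theorem~\ref{THM: Main1}. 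By minimality $G/H$ has a modulo $5$-orientation, which extends to $G$ via Lemma~\ref{reduc-lem}. The point you missed is that Theorem~\ref{THM: Main1} is never applied to $G$ globally---only to the local piece $H$, where the minimality of $W$ (not any global hypothesis on $G$) supplies the weight bound.
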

\begin{proof}
The second statement follows from the first, by Lemma~\ref{prop1}.
To prove the first, suppose the theorem is false, and let $G$ be a
counterexample minimizing $\|G\|$.  By Zhang's Splitting Lemma\footnote{This says
that if $G$ has a vertex $v$ with $d(v)\notin\{2,11\}$, then
we can lift a pair of edges incident to $v$ that are successive in the circular
order around $v$, and the resulting graph is still planar and
odd-11-edge-connected.  For example, if $d(v)=10$, then all edges
incident to $v$ will be lifted in pairs, so the boundary value at $v$ in the
resulting orientation will be 0.  This is why the proof yields a modulo
5-orientation, but does not show that $G$ is strongly $\Z_5$-connected.}
for odd edge-connectivity~\cite{CQsplitting02}, we know $\delta(G)\ge
11$.  If $G$ is $11$-edge-connected,
then we are done by Theorem~\ref{11-edge-thm}; so assume it is not. Choose a
smallest set $W\subset V(G)$ such that $d(W)<11$.  Note that $|W|\ge 2$, and
every proper subset $W'\subsetneq W$ satisfies $d(W')\ge 11$. Let $H=G[W]$.
For any partition $\P=\{P_1, P_2,\dots, P_t\}$ of $H$ with $t\ge 2$, we know
that $d_G(P_i)\ge 11$ by the minimality of $W$, since $P_i\subsetneq W$.  This implies
  \begin{eqnarray*}
    w_H({\mathcal P})&=&\sum_{i=1}^td_H(P_i)-11t+19\\
    &=&\sum_{i=1}^td_G(P_i)-d_G(W^c)-11t+19\\
    &>& 11t-11-11t+19\ge 8.
  \end{eqnarray*}
Thus $w(H)\ge 9$, which implies $H$ is strongly $\Z_5$-connected by Theorem~\ref{THM:
Main1}. By the minimality of $G$, the graph $G/H$ has a modulo 5-orientation.
By Lemma~\ref{reduc-lem}, this extends to a modulo 5-orientation of $G$, which
completes the proof.
\end{proof}

\subsection{Reducible Configurations and Partitions}
\label{sec:Z5-prelims}

To prove Theorem~\ref{THM: Main1}, we assume the result is false and study a minimal
counterexample.  In the next subsection we prove many
structural results about the minimal counterexample, which ultimately imply it cannot exist.
In this subsection we prove that a few small graphs cannot appear as subgraphs of
the minimal counterexample.  We call such a forbidden subgraph
\Emph{reducible}.  By Lemma~\ref{reduc-lem}, to show that $H$ is reducible it
suffices to show $H$ is strongly $\Z_5$-connected.

%

Let $G$ be a graph. We often lift a pair of edges $w_1v$, $vw_2$ incident to a
vertex $v$ in $G$ to form a new graph $G'$. That is, we delete $w_1v$
and $vw_2$ and create a new edge $w_1w_2$.  If $G'$ is strongly
$\Z_k$-connected, then so is $G$, since from any $\beta$-orientation of $G'$ we
delete the edge $w_1w_2$   and add the directed edges $w_1v$ and $vw_2$ to
obtain a $\beta$-orientation of $G$.  To prove $G$ is strongly
$\Z_k$-connected, we use lifting in two similar ways.

First, we lift some edge pairs to create a $G'$ that contains a strongly
$\Z_k$-connected subgraph $H$.  If $G'/H$ is strongly $\Z_k$-connected, then so
is $G'$ by Lemma~\ref{reduc-lem}.  As discussed in the previous paragraph, so is $G$.
Second, given a $\Z_k$-boundary $\beta$, we orient some edges incident to a
vertex $v$ to achieve $\beta(v)$.  For each edge $vw$ that we orient, we
increase or decrease by 1 the value of $\beta(w)$.  Now we delete $v$ and all
oriented edges, and lift the remaining edges incident to $v$ (in pairs).  Call
the resulting graph and boundary $G'$ and $\beta'$.  If $G'$ has a
$\beta'$-orientation, then $G$ has a $\beta$-orientation.  We call
these \EmphE{lifting reductions of the first and second type}{-10mm}, respectively.  In this
paper whenever we lift an edge pair $vw$, $wx$ we require that edge $vx$ already
exists. Thus,
our lifting reductions always preserve planarity.


\begin{lemma}
\label{LEM: 4K2J2K4inSZ5}
Each of the graphs $4K_2, T_{2,3,3}$, $2K_4$, and $3C_4$, shown in
Figure~\ref{FIG: 4K2J2K4C4}, is strongly $\Z_5$-connected.
\end{lemma}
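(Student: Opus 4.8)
The goal is to show that each of the four graphs $4K_2$, $T_{2,3,3}$, $2K_4$, and $3C_4$ is strongly $\Z_5$-connected, i.e.\ that for \emph{every} $\Z_5$-boundary $\beta$ the graph admits a $\beta$-orientation. The overall strategy is to reduce each graph to one we already understand, using the two tools already available: Lemma~\ref{reduc-lem} (contracting a strongly $\Z_5$-connected subgraph) and the observation that lifting a pair $w_1v, vw_2$ back from a strongly $\Z_5$-connected $G'$ recovers strong $\Z_5$-connectivity of $G$. The base facts I would assume are that $K_1$ is trivially strongly $\Z_5$-connected and that, once we reach a single vertex (or a loopless graph with no edges), the only boundary is $0$ and it is achievable vacuously; and more usefully, that a single vertex joined to itself by nothing is the terminal object, so any graph that contracts down to $K_1$ via strongly $\Z_5$-connected pieces is done.

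For $4K_2$ (two vertices joined by four parallel edges): given a boundary $\beta$ with $\beta(v_1)+\beta(v_2)\equiv 0$, I would directly count — with four parallel edges the attainable values of $d^+(v_1)-d^-(v_1)$ are $\{4,2,0,-2,-4\}\equiv\{4,2,0,3,1\}\pmod 5$, which is all of $\Z_5$, so every boundary is achievable. (Equivalently, $4K_2$ is obtained from $3K_2\in\T$, which fails only at $\beta\equiv 0$, by adding one edge; adding that edge lets us fix the parity.) For $2K_4$: I would contract a $4K_2$ living inside it (two of the four vertices are joined by $2\cdot$(two parallel edges)? — no; instead contract one edge-doubled triangle or, more cleanly, observe $2K_4$ contains $K_4$ with all edges doubled, and $2K_4/e$ for a suitable doubled edge $e$ is again a graph on $3$ vertices with large multiplicities, which contains $4K_2$). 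The clean route: pick an edge $e$ of the underlying $K_4$; the two parallel copies of $e$ form a $2K_2$, which is \emph{not} strongly $\Z_5$-connected, so instead pick a triangle $T$ of $K_4$, so $2T=T_{2,2,2}$, show $T_{2,2,2}$ is strongly $\Z_5$-connected (again by a short direct count, or by noting $T_{2,2,2}$ contains a spanning $2K_2$-plus-edge and lifting), then $2K_4/2T$ is a single vertex and we are done by Lemma~\ref{reduc-lem}(ii). Similarly $3C_4$ (the $4$-cycle with every edge tripled) contains a $3K_2$ across one edge; but $3K_2$ is not strongly $\Z_5$-connected. Instead I would contract a path: contract one tripled edge $uv$ to merge $u,v$; the resulting graph on $3$ vertices is $T_{3,3,6}$ (or $T_{3,6,3}$), which contains $4K_2$ and hence is strongly $\Z_5$-connected by the first bullet plus a lifting reduction, so by induction $3C_4$ is too. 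And $T_{2,3,3}$: here $d(v_1)=5$, and one checks $T_{2,3,3}$ is obtained from $T_{1,3,3}\in\T$ by adding one edge in the ``$1$''-position; since $T_{1,3,3}$ fails only for the single boundary $(\beta(v_1),\beta(v_2),\beta(v_3))=(1,1,3)$ (and its $\Z_5$-translates/reflections), adding one parallel edge between $v_1$ and $v_2$ gives us the freedom to shift $\beta(v_1),\beta(v_2)$ by $\pm 1$ and escape every bad boundary — I would verify this by listing, for the generic boundary, how orienting that one extra edge two ways reduces to an achievable $T_{1,3,3}$-boundary.

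In all four cases the argument has the same shape: either do a one-page brute-force check that the set of realizable degree-discrepancy vectors is all of $\Z_5^{V}\cap\{\text{sum }0\}$, or peel off a strongly $\Z_5$-connected subgraph (usually a small doubled/tripled subgraph verified by hand) and contract via Lemma~\ref{reduc-lem}(ii), reducing to $K_1$. The \textbf{main obstacle} is that the obvious subgraphs to contract — $2K_2$, $3K_2$ — are precisely the members of $\T$ that fail, so one cannot naively contract an edge-bundle between two vertices; one must instead contract along a spanning structure (a triangle or a path) whose image is a multigraph already known to contain $4K_2$ as a subgraph, and the bookkeeping of which multiplicities survive a contraction in a multigraph (parallel edges merging, loops vanishing) is where a careless argument would slip. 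A secondary nuisance is that ``strongly $\Z_5$-connected'' must be checked for \emph{all} boundaries, so any case analysis has to cover the handful of boundaries (up to the evident $\Z_5$-symmetry) that are ``extremal'' — those where the naive orientation overshoots by exactly the multiplicity available — and show the extra edge or the contracted piece absorbs the discrepancy.
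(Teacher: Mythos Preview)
Your $4K_2$ argument is fine and matches the paper. The proposals for $2K_4$ and $3C_4$, however, both fail. For $2K_4$ you want to contract $2T=T_{2,2,2}$, but $T_{2,2,2}$ is \emph{not} strongly $\Z_5$-connected: with $\beta=(1,1,3)$, each vertex has degree $4$, and the only element of $\{-4,-2,0,2,4\}$ congruent to $1\pmod 5$ is $-4$, so all four edges must point into $v_1$ and all four into $v_2$; the two edges joining $v_1$ and $v_2$ cannot do both. (Indeed even the strictly larger $T_{2,2,3}$ lies in $\T$ and fails at this same boundary.) For $3C_4$ you want to contract a tripled edge, i.e.\ a copy of $3K_2$; but $3K_2\in\T$ is not strongly $\Z_5$-connected, so Lemma~\ref{reduc-lem} does not license the contraction---and the quotient would be $T_{3,3,3}$, not $T_{3,3,6}$, since $C_4$ has no diagonal. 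You flag exactly this trap in your ``main obstacle'' paragraph and then walk into it anyway.

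The paper avoids contracting subgraphs here and instead uses what it calls a \emph{lifting reduction of the second type}: at a chosen vertex $v_i$, orient a few incident edges to achieve $\beta(v_i)$, lift the remaining incident nonparallel pairs, and delete $v_i$. For $T_{2,3,3}$ (with $d(v_1)=5$): if $\beta(v_1)\neq 0$, orient three edges at $v_1$ and lift the remaining pair, reducing to $4K_2$; the residual case $\beta(v_1)=\beta(v_2)=0$ (hence $\beta(v_3)=0$) is handled by one explicit orientation. For $2K_4$ and $3C_4$ (each vertex of degree $6$): if $\beta(v_1)\in\{0,2,3\}$, orient two nonparallel edges at $v_1$ and lift two pairs, landing in $T_{2,3,3}$; the leftover case $\beta(v_i)\in\{1,4\}$ for all $i$ is tiny (since $\sum_i\beta(v_i)\equiv 0$) and is finished with explicit orientations. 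The key idea you are missing is that one does not contract a subgraph at all; one removes a vertex after fixing its boundary by hand, and the lifted pairs reassemble into the previously handled graph on the remaining vertices.
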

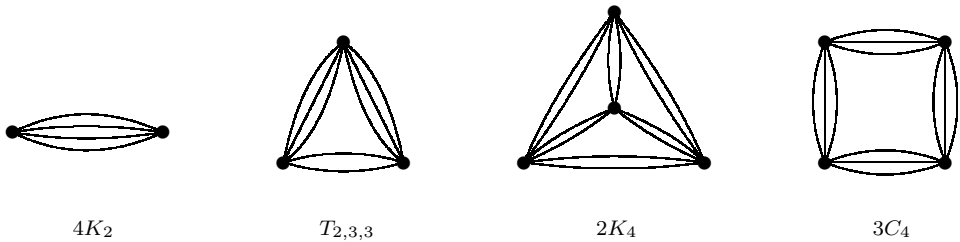
\begin{figure}[ht]

\setlength{\unitlength}{0.08cm}

\begin{center}

\begin{picture}(165,30)
\put(0,10){\circle*{2}}\put(25,10){\circle*{2}}
\qbezier(0, 10)(12, 12)(25, 10)\qbezier(0, 10)(12, 16)(25, 10)\qbezier(0, 10)(12, 8)(25, 10)\qbezier(0, 10)(12, 4)(25, 10)
\put(10,-7){\footnotesize{$4K_2$}}

\put(45,5){\circle*{2}}\put(65,5){\circle*{2}}\put(55,25){\circle*{2}}
\qbezier(45, 5)(55, 8)(65, 5)\qbezier(45, 5)(55, 2)(65, 5)
\qbezier(45, 5)(52, 10)(55, 25)\qbezier(45, 5)(48, 20)(55, 25)\qbezier(45, 5)(45, 5)(55, 25)
\qbezier(65, 5)(65, 5)(55, 25)\qbezier(65, 5)(58, 10)(55, 25)\qbezier(65, 5)(62, 20)(55, 25)
\put(51,-7){\footnotesize{$T_{2,3,3}$}}

\put(85,5){\circle*{2}}\put(115,5){\circle*{2}}\put(100,30){\circle*{2}}\put(100,14){\circle*{2}}
\qbezier(85, 5)(100, 7)(115, 5)\qbezier(85, 5)(100, 3)(115, 5)
\qbezier(85, 5)(90, 17)(100, 30)\qbezier(85, 5)(94, 17)(100, 30)
\qbezier(115, 5)(110, 17)(100, 30)\qbezier(115, 5)(106, 17)(100, 30)
\qbezier(85, 5)(90, 10)(100, 14)\qbezier(85, 5)(94, 9)(100, 14)
\qbezier(115, 5)(110, 10)(100, 14)\qbezier(115, 5)(106, 9)(100, 14)
\qbezier(100, 30)(98, 22)(100, 14)\qbezier(100,30)(102, 22)(100, 14)

\put(97,-7){\footnotesize{$2K_4$}}

\put(143,-7){\footnotesize{$3C_4$}}
\put(135,5){\circle*{2}}\put(155,5){\circle*{2}}\put(135,25){\circle*{2}}\put(155,25){\circle*{2}}
\qbezier(135,5)(135,5)(155,5)\qbezier(135,5)(145,1)(155,5)\qbezier(135,5)(145,9)(155,5)
\qbezier(135,5)(135,15)(135,25)\qbezier(135,5)(131,15)(135,25)\qbezier(135,5)(139,15)(135,25)
\qbezier(135,25)(135,25)(155,25)\qbezier(135,25)(145,21)(155,25)\qbezier(135,25)(145,29)(155,25)
\qbezier(155,5)(155,15)(155,25)\qbezier(155,5)(151,15)(155,25)\qbezier(155,5)(159,15)(155,25)

\end{picture}
\end{center}
\vspace{0.4cm}
\caption{The graphs $4K_2, T_{2,3,3}, 2K_4, 3C_4$.}
\label{FIG: 4K2J2K4C4}
\end{figure}
\begin{proof}
Proving the lemma amounts to checking a finite list of cases.  So our goal
is to make this as painless as possible.
%
Throughout we fix a $\Z_5$-boundary
$\beta$ and construct an orientation that achieves $\beta$.

Let $G=4K_2$ and $V(G)=\{v_1,v_2\}$.  To achieve $\beta(v_1)\in\{0,1,2,3,4\}$
the number of edges we orient out of $v_1$ is (respectively) 2, 0, 3, 1, 4.

Let $G=T_{2,3,3}$ and $V(G)=\{v_1,v_2,v_3\}$, with $d(v_1)=d(v_2)=5$ and
$d(v_3)=6$.  If $\beta(v_1)\ne 0$, then we achieve $\beta$ by orienting 3 edges
incident to $v_1$, and lifting a pair of unused, nonparallel, edges incident to
$v_1$ to create a fourth edge $v_2v_3$.  Since $4K_2$ is strongly
$\Z_5$-connected, we can use the resulting 4 edges to achieve $\beta(v_2)$ and
$\beta(v_3)$.  (This is a lifting reduction of the second type.  In what follows,
we are less explicit about such descriptions.)  So we assume that
$\beta(v_1)=0$ and, by symmetry, $\beta(v_2)=0$.  This implies $\beta(v_3)=0$.
Now we orient all edges from $v_1$ to $v_3$, from $v_1$ to $v_2$ and from $v_3$ to $v_2$.

Let $G=2K_4$ and $V(G)=\{v_1,v_2,v_3,v_4\}$.  If $\beta(v_1)\in\{0,2,3\}$, then
we achieve $\beta(v_1)$ by orienting two nonparallel edges incident to $v_1$.
Now we lift two pairs of unused edges incident to $v_1$ to get a $T_{2,3,3}$.
Since $T_{2,3,3}$ is strongly $\Z_5$-connected, we are done by
Lemma~\ref{reduc-lem}.  So assume $\beta(v_1)\notin\{0,2,3\}$.  By symmetry, we
assume $\beta(v_i)\in\{1,4\}$ for all $i$.  Since $\beta$ is a $\Z_5$-boundary,
we further assume $\beta(v_i)=1$ when $i\in\{1,2\}$ and $\beta(v_j)=4$ when
$j\in\{3,4\}$.  Let $V_1=\{v_1,v_2\}$ and $V_2=\{v_3,v_4\}$.  Orient all edges
from $V_2$ to $V_1$.  For each pair of parallel edges within $V_1$ or $V_2$,
orient one edge in each direction.  This achieves $\beta$.

Let $G=3C_4$ and $V(G)=\{v_1,v_2,v_3,v_4\}$ with $v_1,v_3\in N(v_2)\cap N(v_4)$.
If $\beta(v_1)\in\{0,2,3\}$, then we achieve $\beta(v_1)$ by orienting two
nonparallel edges incident to $v_1$ and lifting two pairs of edges incident
to $v_1$.  The resulting unoriented graph is $T_{2,3,3}$, so we are done by
Lemma~\ref{reduc-lem}.  Assume instead, by symmetry, that
$\beta(v_i)\in\{1,4\}$ for all $i$.
Since $\beta$ is a $\Z_5$-boundary, two vertices $v_i$ have $\beta(v_i)=1$ and
two vertices $v_j$ have $\beta(v_j)=4$.  By symmetry, assume $\beta(v_1)=1$.
If $\beta(v_3)=1$, then orient all edges out from $v_1$ and $v_3$.  Assume
instead, by symmetry, that $\beta(v_2)=1$; now reverse one edge $v_3v_2$ from
the previous orientation.
\end{proof}

\begin{defn}
For partitions ${\P}=\{P_1, P_2,\dots, P_t\}$ and ${\P}'=\{P_1', P_2',\dots,
P_s'\}$, we say that $\P'$ is a \EmphE{refinement}{0mm} of $\P$, denoted by
${\P}'\preceq{\P}$, if ${\P}'$ is obtained from ${\P}$ by further partitioning
$P_i$ into smaller sets for some $P_i$'s in ${\P}$.  More formally, we require
that for every $P_j'\in {\P}'$, there exists $P_i\in {\P}$ such that $P_j'\subseteq
P_i$.  

Since partitions are central to our theorems and proofs, we name a few common
types of them. A partition ${\P}=\{P_1, P_2,\dots, P_t\}$ is \EmphE{trivial}{-3mm} if each
part $P_i$ is a singleton, i.e., $V(G)$ is partitioned into $|G|$
parts; otherwise $\P$ is \EmphE{nontrivial}{-3mm}.  A trivial partition is minimal under the
relation $\prec$.  A partition ${\P}=\{P_1, P_2,\dots, P_t\}$ is \EmphE{almost
trivial}{-3mm} if $t=|G|-1$ and there is a unique part $P_i$ with $|P_i|=2$.
 A partition ${\P}$ is called \EmphE{normal}{4mm} if it is neither trivial
nor almost trivial and ${\P}\neq \{V(G)\}$.
\end{defn}

Given a partition $\P$ of $V(G)$ and a partition $\Q$ of $G[P_1]$, the
following lemma relates the weights of $\P$, $\Q$, and the refinement
$\Q\cup(\P\setminus\{P_1\})$.

\begin{lemma}
  Let ${\P}=\{P_1, P_2,\dots, P_t\}$ be a partition of $V(G)$ with $|P_1|>1$.
Let $H=G[P_1]$ and let ${\Q}=\{Q_1, Q_2,\dots, Q_s\}$ be a partition of
$V(H)$.  Now ${\Q}\cup ({\P}\setminus\{P_1\})$ is a refinement of $\P$
satisfying
\begin{eqnarray}
\label{EQ: wH}
  w_G({\Q}\cup ({\P}\setminus\{P_1\}))= w_H({\Q})+ w_G({\P})-8.
\end{eqnarray}
\label{mod5-key-lem}
\end{lemma}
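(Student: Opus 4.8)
The plan is to compute $w_G(\Q \cup (\P \setminus \{P_1\}))$ directly from Definition~\ref{DEF: partition} and compare it term-by-term with the right-hand side. Write $\Q \cup (\P \setminus \{P_1\}) = \{Q_1, \dots, Q_s, P_2, \dots, P_t\}$, a partition of $V(G)$ with $s + t - 1$ parts. So by definition
\[
w_G(\Q \cup (\P \setminus \{P_1\})) = \sum_{j=1}^{s} d_G(Q_j) + \sum_{i=2}^{t} d_G(P_i) - 11(s+t-1) + 19.
\]
The only subtle point is relating $d_G(Q_j)$, the degree of $Q_j$ as a vertex subset of $G$, to $d_H(Q_j)$, its degree in $H = G[P_1]$. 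Since each $Q_j \subseteq P_1$, the edges of $G$ leaving $Q_j$ split into those staying inside $P_1$ (i.e., going to $P_1 \setminus Q_j$) and those leaving $P_1$ entirely. The first kind are exactly the edges counted by $d_H(Q_j)$. The second kind, summed over all $j$, count each edge of $[P_1, P_1^c]_G$ exactly once (its endpoint in $P_1$ lies in exactly one $Q_j$). Hence
\[
\sum_{j=1}^{s} d_G(Q_j) = \sum_{j=1}^{s} d_H(Q_j) + d_G(P_1).
\]

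Substituting this identity into the expression above and regrouping gives
\[
w_G(\Q \cup (\P \setminus \{P_1\})) = \Bigl( \sum_{j=1}^{s} d_H(Q_j) - 11s + 19 \Bigr) + \Bigl( d_G(P_1) + \sum_{i=2}^{t} d_G(P_i) - 11t + 19 \Bigr) - 19 + 11 - 8 + 8,
\]
and one checks the arithmetic of the additive constants: the first bracket is $w_H(\Q)$, the second bracket is $w_G(\P)$, and the leftover constant is $-19 + 11 = -8$, matching Equation~\eqref{EQ: wH}. (Concretely: the original formula contributes $+19$ once, but after splitting we have written $+19$ twice, so we must subtract $19$; and $-11(s+t-1) = -11s - 11t + 11$, whereas $w_H(\Q)$ and $w_G(\P)$ together supply $-11s - 11t$, so we have an extra $+11$; net correction $-19 + 11 = -8$.) That $\Q \cup (\P \setminus \{P_1\})$ is a refinement of $\P$ is immediate from the definition of refinement, since every part is either some $P_i$ with $i \ge 2$ or a subset of $P_1$.

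This is an entirely routine bookkeeping argument; there is no real obstacle. The one place to be careful is the degree-splitting identity $\sum_j d_G(Q_j) = \sum_j d_H(Q_j) + d_G(P_1)$: one must verify that no edge of $G$ with both endpoints in $P_1$ but in different $Q_j$'s gets double-counted on the left in a way that breaks the identity — it does not, because such an edge contributes $1$ to exactly two of the $d_G(Q_j)$ terms and also $1$ to exactly two of the $d_H(Q_j)$ terms, so it cancels correctly; and edges leaving $P_1$ contribute $1$ to exactly one $d_G(Q_j)$ and $0$ to every $d_H(Q_j)$, accounting for the $d_G(P_1)$ term. Everything else is just tracking the constants $11$ and $19$ through the arithmetic.
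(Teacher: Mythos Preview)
Your proof is correct and follows essentially the same route as the paper's: expand $w_G$ on the refined partition, use the identity $\sum_j d_G(Q_j)=\sum_j d_H(Q_j)+d_G(P_1)$, and regroup into $w_H(\Q)+w_G(\P)-8$. The paper's write-up is terser (it absorbs the degree identity directly into a one-line chain of equalities), and your displayed line with the trailing ``$-19+11-8+8$'' is a bit muddled, but the substance and the key step are identical.
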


\begin{proof} Clearly, ${\Q}\cup ({\P}\setminus\{P_1\})$ is a refinement
of $\P$, and it follows from Definition \ref{DEF: partition} that
\begin{align*}
w_G({\Q}\cup ({\P}\setminus\{P_1\}))
&=\sum_{i=1}^sd_G(Q_i) + \sum_{j=2}^td_G(P_j) -11(s+t-1)+19\\
&=[\sum_{i=1}^sd_G(Q_i)-d_G(P_1)-11s+19] + [\sum_{j=1}^td_G(P_j) -11(t-1)]\\
&=[\sum_{i=1}^sd_H(Q_i)-11s+19] + [\sum_{j=1}^td_G(P_j) -11t+19]-(19-11)\\
&= w_H({\Q})+ w_G({\P})-(19-11).
\end{align*}
\aftermath
\end{proof}

\subsection{Properties of a Minimal Counterexample to Theorem~\ref{THM: Main1}}

{\bf Let $G$ be a counterexample to Theorem \ref{THM: Main1}  that minimizes
$|G|+\|G\|$.} Thus Theorem~\ref{THM: Main1} holds for all graphs smaller
than $G$. This implies the following lemma, which we will use frequently.

\begin{lemma}\label{LEM: smallH}
If $H$ is a planar graph with $w(H)\ge 0$ and $|H|+\|H\|<|G|+\|G\|$, then each of the following holds.
\begin{itemize}
\item[(a)] If $w_H({\P})\ge 4$ for every nontrivial partition ${\P}$, then $H$ is
strongly $\Z_5$-connected unless $H\in\{2K_2, 3K_2$, $T_{1,3,3}, T_{2,2,3}\}$.
\item[(b)] If $w(H)\ge 1$ and $H$ is $4$-edge-connected, then $H$ is strongly $\Z_5$-connected.
\item[(c)] If $w(H)\ge 4$, then $H$ is strongly $\Z_5$-connected.
\end{itemize}
\end{lemma}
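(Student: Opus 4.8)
The plan is to deduce all three parts directly from Theorem~\ref{THM: Main1}, applied to $H$ itself. Since $H$ is planar, $w(H)\ge 0$, and $|H|+\|H\|<|G|+\|G\|$, minimality of $G$ means Theorem~\ref{THM: Main1} holds for $H$: for every $\Z_5$-boundary $\beta$, the graph $H$ has a $(\Z_5,\beta)$-orientation unless $H$ has a troublesome partition. Equivalently, if $H$ has no troublesome partition then $H$ is strongly $\Z_5$-connected. So in each of (a), (b), (c) it suffices to rule out troublesome partitions of $H$, the single exception being that in part (a) the graph $H$ may itself lie in $\T=\{2K_2,3K_2,T_{1,3,3},T_{2,2,3}\}$.

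The one computation I would record first is the value of $w_H(\P)$ for a troublesome partition $\P=\{P_1,\dots,P_t\}$ (so $H/\P\in\T$). Since $d_H(P_i)$ equals the degree in $H/\P$ of the vertex formed by contracting $P_i$, summing over $i$ gives $\sum_{i=1}^{t} d_H(P_i)=2\|H/\P\|$, and hence
\[
  w_H(\P)=2\|H/\P\|-11t+19 ,
\]
which is precisely the weight of the trivial partition of $H/\P$. Reading this off for the four members of $\T$ (equivalently, using that each attains its $w$-minimum only at its trivial partition), we obtain $w_H(\P)=0$ when $H/\P\in\{T_{2,2,3},T_{1,3,3}\}$, $w_H(\P)=1$ when $H/\P=2K_2$, and $w_H(\P)=3$ when $H/\P=3K_2$; in particular $w_H(\P)\le 3$ always. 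Moreover, when $H/\P\in\{2K_2,3K_2\}$ we have $t=2$ and $[P_1,P_2]_H$ has $2$ or $3$ edges, so $H$ then has an edge cut of size at most $3$.

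With these facts in hand, the three parts follow quickly. For (c): a troublesome partition $\P$ would give $4\le w(H)\le w_H(\P)\le 3$, a contradiction, so $H$ has none and is strongly $\Z_5$-connected. For (b): a troublesome partition $\P$ with $H/\P\in\{2K_2,3K_2\}$ would yield an edge cut of size at most $3$, contradicting $4$-edge-connectivity, whereas $H/\P\in\{T_{2,2,3},T_{1,3,3}\}$ would give $w_H(\P)=0$, contradicting $1\le w(H)\le w_H(\P)$; so $H$ has no troublesome partition and is strongly $\Z_5$-connected. For (a): any troublesome partition $\P$ satisfies $w_H(\P)\le 3<4$, so by hypothesis $\P$ cannot be nontrivial; hence the only possible troublesome partition of $H$ is the trivial one, which forces $H=H/\P\in\T$. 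Therefore, if $H\notin\{2K_2,3K_2,T_{1,3,3},T_{2,2,3}\}$ then $H$ has no troublesome partition and is strongly $\Z_5$-connected. (This also shows (c) is the special case of (a) in which $w(H)\ge 4$ makes $w_H(\P)\ge 4$ hold for every partition and excludes $H\in\T$, since each graph of $\T$ has $w\le 3$.)

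I do not expect any real obstacle here: once the definition of a troublesome partition is unwound, the whole argument rests on the identity $w_H(\P)=2\|H/\P\|-11\,|V(H/\P)|+19$ and the stated values $w(3K_2)=3$, $w(2K_2)=1$, $w(T_{2,2,3})=w(T_{1,3,3})=0$. The only point needing care is the trivial/nontrivial bookkeeping in part (a): the hypothesis there constrains only nontrivial partitions, so one must note separately that the trivial partition is troublesome precisely when $H$ is one of the four graphs of $\T$, which is exactly why those four appear as exceptions in the statement.
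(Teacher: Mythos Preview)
Your proposal is correct and follows essentially the same approach as the paper's proof: apply Theorem~\ref{THM: Main1} to $H$ by minimality of $G$, then in each case rule out troublesome partitions using the observation that every such $\P$ has $w_H(\P)\le 3$. The only cosmetic difference is that the paper deduces (c) from (b) (noting that $w(H)\ge 4$ forces $4$-edge-connectivity), whereas you prove (c) directly; both routes are equally short.
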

\begin{proof}
To prove each part, we fix a $\Z_5$-boundary $\beta$ and apply Theorem \ref{THM:
Main1} to $H$.  Notice that each troublesome partition $\P$ satisfies $w(G/\P)\le 3$.
So for (a), only the trivial partition can be troublesome.
Thus, $H$ is strongly $\Z_5$-connected unless $H\in\{2K_2, 3K_2, T_{1,3,3},
T_{2,2,3}\}$. For (b), $G$ has no partition $\P$ with $G/\P\in\{2K_2,3K_2\}$
since $G$ is $4$-edge-connected.
And $G$ has no partition $\P$ with $G/\P\in\{T_{1,3,3},T_{2,2,3}\}$
since $w(H)\ge 1$.  So $H$ is again strongly $\Z_5$-connected, by
Theorem~\ref{THM: Main1}.  Finally,
(c) follows from (b), since if $H$ has an edge cut $[X,X^c]$ of size at most 3,
then $w_H(\{X,X^c\})\le 2(3)-11(2)+19=3$, which contradicts our assumption that
$w(H)\ge 4$.  
\end{proof}

The main idea of our proof is to show that the value of the weight function
$w_G(\P)$ is relatively large for each nontrivial partition $\P$.  This
enables us to slightly modify certain proper subgraphs and still apply
Lemma~\ref{LEM: smallH} to the resulting graph $H$.  This added flexibility
(to slightly modify the subgraph) helps us to prove that more subgraphs are
reducible.  In the next section, these forbidden subgraphs facilitate a
discharging proof that shows that our minimal counterexample $G$ cannot exist.

\begin{claim}
\label{CL: nostrongZ5}
$G$ has no strongly $\Z_5$-connected subgraph $H$ with $|H|>1$. In particular,
\begin{itemize}
\item[(a)] $G$ has no copy of $4K_2$, $T_{2,3,3}$, $2K_4$, or $3C_4$ (by
Lemma~\ref{LEM: 4K2J2K4inSZ5}), and
\item[(b)] $|G|\ge 4$.
\end{itemize}
\end{claim}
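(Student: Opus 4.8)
The plan is to prove the displayed first sentence — that $G$ contains no strongly $\Z_5$-connected subgraph $H$ with $|H|>1$ — by a contraction-and-minimality argument, and then read off (a) and (b). So suppose such an $H$ exists. Since a disconnected graph is never strongly $\Z_5$-connected, $H$ is connected, hence $\|H\|\ge 1$. Set $G'=G/H$, with new vertex $w$, and let $\beta'$ be the induced $\Z_5$-boundary described just before Lemma~\ref{reduc-lem}. Then $G'$ is planar (contracting a connected subgraph preserves planarity), and $|G'|+\|G'\|<|G|+\|G\|$ since contracting $H$ removes at least one vertex and at least one edge. To invoke Theorem~\ref{THM: Main1} on $G'$, I must check that its two hypotheses survive the contraction. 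The key point is that partitions of $G'$ correspond bijectively to partitions $\P$ of $G$ in which $V(H)$ lies inside a single part, with the same number of parts; and contracting $H$ inside one part of such a $\P$ changes none of the cut sizes $d(\cdot)$, so the value of $w$ is unchanged. Hence $w(G')\ge w(G)\ge 0$. Moreover, if $\P'$ were a troublesome partition of $G'$, then for the corresponding partition $\P$ of $G$ we would have $G/\P\cong G'/\P'\in\T$, so $\P$ would be troublesome for $G$, contrary to hypothesis; thus $G'$ has no troublesome partition.

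Now minimality of $G$ means Theorem~\ref{THM: Main1} holds for $G'$, so $G'$ admits a $(\Z_5,\beta')$-orientation. As $H$ is strongly $\Z_5$-connected, Lemma~\ref{reduc-lem} extends this to a $(\Z_5,\beta)$-orientation of $G$, contradicting that $G$ is a counterexample. This proves the main statement, and (a) follows immediately: each of $4K_2,T_{2,3,3},2K_4,3C_4$ is strongly $\Z_5$-connected by Lemma~\ref{LEM: 4K2J2K4inSZ5} and has more than one vertex, so none is a subgraph of $G$.

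For (b), suppose $|G|\le 3$. Since $G$ is a counterexample, it is nonempty and not strongly $\Z_5$-connected. If $|G|=1$, the only $\Z_5$-boundary is $0$, realized by the empty orientation — impossible. If $|G|=2$, write $G=mK_2$: for $m\le 1$ the trivial partition gives $w(G)<0$; for $m\in\{2,3\}$ we have $G\in\T$, so the trivial partition is troublesome; and $m\ge 4$ forces a $4K_2$ subgraph, contradicting (a). If $|G|=3$, write $G=T_{a,b,c}$ with $a\le b\le c$: part (a) gives $c\le 3$ and forbids $a\ge 2$ together with $b=c=3$, while $w(G)\ge 0$ applied to the trivial partition gives $2(a+b+c)-14\ge 0$, i.e. $a+b+c\ge 7$; a quick check leaves only $T_{2,2,3}$ and $T_{1,3,3}$, both in $\T$, so the trivial partition is troublesome. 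Every case contradicts the choice of $G$, so $|G|\ge 4$.

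I do not expect a serious obstacle here — this is a standard contractible-configuration reduction. The only points needing care are verifying that the weight bound and the absence of a troublesome partition are inherited by $G/H$ (which rests on the elementary fact that contracting a subgraph sitting inside one part of a partition leaves every cut size unchanged), and dispatching the small finite cases $|G|\le 3$ in part (b).
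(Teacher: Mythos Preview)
Your proof is correct and follows essentially the same contraction-and-minimality argument as the paper, with the added virtue that you explicitly verify the two hypotheses of Theorem~\ref{THM: Main1} (namely $w(G')\ge 0$ and the absence of a troublesome partition) pass to $G'=G/H$, which the paper leaves implicit. Your case analysis for (b) is also organized slightly differently but reaches the same conclusion; note that your bound $a+b+c\ge 7$ from the trivial partition is in fact sharper than the paper's stated $\|G\|\ge 8$.
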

\begin{proof}
Suppose to the contrary that $H$ is a strongly $\Z_5$-connected subgraph of $G$
with $|H|>1$, and let $G'=G/H$.  Since $G$ is a minimal counterexample, $G'$
is strongly $\Z_5$-connected, by Theorem~\ref{THM: Main1}.  So
Lemma~\ref{reduc-lem} implies $G$ is strongly $\Z_5$-connected, which is a
contradiction.  This proves both the first statement and (a).  For (b), clearly
$|G|\ge 3$, since $w(G)\ge 0$ and $G\notin\{2K_2,3K_2\}$ and $G$ contains no
$4K_2$.  So assume $|G|=3$.  Since $w(G/\P)\ge 0$ for the trivial partition $\P$,
we know that $\|G\|\ge 8$.  Since $G\notin\{T_{1,3,3},T_{2,2,3}\}$, either $G$
contains $4K_2$ or $G$ contains $T_{2,3,3}$.  Each case contradicts (a).
\end{proof}

\begin{claim}
\label{CL: nontrivialge4}
Let ${\P}=\{P_1, P_2,\dots, P_t\}$ be a nontrivial partition of $V(G)$.  Now
\begin{itemize}
\item[(a)] $w_G({\P})\ge 5$, and
\item[(b)] $w_G({\P})\ge 8$ if ${\P}$ is normal.
\end{itemize}
\end{claim}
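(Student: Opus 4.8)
The plan is to prove (a) first and then bootstrap to (b), letting the weight-transfer identity of Lemma~\ref{mod5-key-lem} do the real work; the underlying principle is that refining one part $P_1$ of a partition costs $8$ units of weight but refunds $w_H$ of whatever subpartition we insert inside $H=G[P_1]$. Throughout I use that $G$, being a counterexample to Theorem~\ref{THM: Main1}, is planar with $w(G)\ge 0$, so $w_G(\Q)\ge 0$ for \emph{every} partition $\Q$ of $V(G)$.

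For (a) I would first dispose of the one-part case: if $\P=\{V(G)\}$ then $w_G(\P)=0-11+19=8\ge 5$ directly. Otherwise $\P$ has at least two parts, so (as $\P$ is nontrivial) some part $P_1$ has $|P_1|\ge 2$ and $P_1\subsetneq V(G)$; set $H:=G[P_1]$, a planar graph with $|H|+\|H\|<|G|+\|G\|$. I would argue $w(H)\le 3$: otherwise $w(H)\ge 4$, and Lemma~\ref{LEM: smallH}(c) would make $H$ strongly $\Z_5$-connected, contradicting Claim~\ref{CL: nostrongZ5} since $|H|=|P_1|>1$. Then, choosing a partition $\Q$ of $V(H)$ with $w_H(\Q)=w(H)$ and applying Lemma~\ref{mod5-key-lem} to the refinement $\P'=\Q\cup(\P\setminus\{P_1\})$, I get $0\le w_G(\P')=w_H(\Q)+w_G(\P)-8$, whence $w_G(\P)\ge 8-w(H)\ge 5$.

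For (b), let $\P$ be normal, so again some part $P_1$ has $|P_1|\ge 2$, but now $\P$ is also not almost trivial. Set $H:=G[P_1]$; from the (a)-argument, $w(H)\le 3$ and $w_G(\P)\ge 8-w(H)$. Assuming for contradiction $w_G(\P)\le 7$, I would deduce $w(H)\ge 1$, so Lemma~\ref{LEM: smallH}(b) together with Claim~\ref{CL: nostrongZ5} forces $H$ to be not $4$-edge-connected; fix $X$ with $\emptyset\ne X\subsetneq V(H)$ and $d_H(X)\le 3$, so that $\Q=\{X,\,V(H)\setminus X\}$ has $w_H(\Q)=2d_H(X)-3\le 3$. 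Lemma~\ref{mod5-key-lem} then gives $w_G(\P_1)=w_H(\Q)+w_G(\P)-8\le 3+7-8=2$ for the refinement $\P_1:=\Q\cup(\P\setminus\{P_1\})$. The crux is to observe that $\P_1$ is nontrivial: were it trivial, then $X$, $P_1\setminus X$, and every $P_i$ with $i\ge 2$ would be singletons, forcing $|P_1|=2$ and making $\P$ almost trivial, excluded by normality. So part (a) applies to $\P_1$, giving $w_G(\P_1)\ge 5$, which contradicts $w_G(\P_1)\le 2$; hence $w_G(\P)\ge 8$.

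I do not expect a deep obstacle; the real care is bookkeeping. The main thing to get right is, after each refinement, verifying that the new partition ($\P'$ in (a), $\P_1$ in (b)) is genuinely \emph{nontrivial}, since that (together with $w(G)\ge 0$ or part (a)) is exactly what lets us bound its weight from below; closely related, I must match the degenerate partitions $\{V(G)\}$ and the almost trivial ones against their definitions and peel them off first, since those are precisely where the refinement argument degenerates. A minor secondary point is to check, before each use of Lemma~\ref{LEM: smallH}(b),(c), its standing hypotheses: that the relevant induced subgraph is planar (inherited from $G$), has nonnegative weight, and is strictly smaller than $G$ in $|\cdot|+\|\cdot\|$.
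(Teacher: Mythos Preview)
Your proof is correct. Part~(a) is essentially the contrapositive of the paper's argument, with the pleasant side effect that you don't need to peel off the almost trivial case by hand: your bound $w_G(\P)\ge 8-w(H)$ with $w(H)\le 3$ handles it automatically.

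Part~(b) is a genuinely different route. The paper splits into two cases---whether $\P$ has one or more nontrivial parts---and in the harder case (unique nontrivial part $P_1$ with $|P_1|\ge 3$) invokes Lemma~\ref{LEM: smallH}(a), which requires separately checking that $H\notin\{2K_2,3K_2,T_{1,3,3},T_{2,2,3}\}$. You instead use Lemma~\ref{LEM: smallH}(b): from $w(H)\ge 1$ you force $H$ to have an edge cut of size at most $3$, refine $\P$ across that cut to produce a nontrivial partition of weight at most $2$, and contradict~(a). This avoids both the case split and the explicit exclusion of the four troublesome graphs, at the cost of one extra refinement step; the paper's route, by contrast, yields slightly stronger intermediate bounds on $w(H)$ that it doesn't actually need here. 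Your check that the refined partition $\P_1$ is nontrivial (using that normality of $\P$ rules out $|P_1|=2$ with all other parts singletons) is exactly the right hinge, and your remarks about verifying the standing hypotheses of Lemma~\ref{LEM: smallH} (planarity, $w\ge 0$, strictly smaller) are on point.
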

\begin{proof}
Our proof is by contradiction.  For an almost trivial partition ${\P}$, we have
$w_G({\P})\ge w_G(V(G))-2(3)+11\ge 5$, since $G$ does not contain $4K_2$ by
Claim~\ref{CL: nostrongZ5}(a).  If $\P=\{V(G)\}$, then $w_G(\P)=0-11+19=8$.
Since $|G|\ge 4$ by Claim~\ref{CL: nostrongZ5}(b),
all other nontrivial partitions are normal.

Let ${\P}=\{P_1, P_2,\dots, P_t\}$ be a normal partition of $V(G)$.  By
symmetry we assume $|P_1|> 1$ and let $H=G[P_1]$.  For any partition
${\Q}=\{Q_1, Q_2,\dots, Q_s\}$ of $V(H)$, by Eq.~(\ref{EQ: wH}) the refinement
${\Q}\cup ({\P}\setminus\{P_1\})$ of $\P$ satisfies
\begin{eqnarray}\label{EQ: wPwQ}
 w_H({\Q})= w_G({\Q}\cup ({\P}\setminus\{P_1\})) - w_G({\P})+8.
\end{eqnarray}

(a) We first show that $w_G(\P)\ge 5$.  If $w_G(\P)\le 4$, then
Eq.~(\ref{EQ: wPwQ}) implies $w_H(\Q)\ge 4$ for any partition $\Q$ of $H$,
since $w_G({\Q}\cup (\P\setminus\{P_1\}))\ge 0$. Hence $w(H)\ge 4$ and $H$ is
strongly $\Z_5$-connected by Lemma~\ref{LEM: smallH}(c), which contradicts
Claim~\ref{CL: nostrongZ5}.  This proves (a).

(b) 
We now show that $w_G({\P})\ge 8$. Suppose to the
contrary that $w_G({\P})\le 7$. If ${\P}$ contains at least
two nontrivial parts, say $|P_2|>1$, then (a) implies $w_G({\Q}\cup
({\P}\setminus\{P_1\}))\ge 5$ for any partition ${\Q}$
of $H$. Hence $w(H)\ge 6$ by Eq.~(\ref{EQ: wPwQ}), and so $H$ is strongly
$\Z_5$-connected by Lemma~\ref{LEM: smallH}(c), which contradicts Claim~\ref{CL: nostrongZ5}.
So assume instead that ${\P}$ contains a unique nontrivial part $P_1$ and $|P_1|\ge 3$.
For any nontrivial partition ${\Q}$ of $H$, the refinement ${\Q}\cup
({\P}\setminus\{P_1\})$ of $\P$ is a nontrivial partition of $G$, and so
$w_G({\Q}\cup ({\P}\setminus\{P_1\}))\ge 5$ by (a).   Thus $w_H({\Q})\ge 6$ for
any nontrivial partition ${\Q}$ of $H$ by Eq.~(\ref{EQ: wPwQ}). For the trivial
partition ${\Q}^*$ of $H$, since $w_G({\P})\le 7$, Eq.~(\ref{EQ: wPwQ}) implies
$w_H({\Q}^*)\ge 1$.  Since $|H|=|P_1|\ge 3$, we know $H\notin \{2K_2,3K_2\}$.
Since $w(H)\ge 1$, we know $H\not\cong T_{a,b,c}$ with $a+b+c\le 7$.  So Lemma
\ref{LEM: smallH}(a) implies that $H$ is strongly $\Z_5$-connected, which
contradicts Claim~\ref{CL: nostrongZ5}.
\end{proof}

The next two claims are consequences of Claim \ref{CL: nontrivialge4};
they give lower bounds on the edge-connectivity of $G$.

\begin{claim}\label{CL: 2nontrivialge6}
For a partition ${\P}=\{P_1, P_2,\dots, P_t\}$,
\begin{itemize}
\item[(a)] if $|P_1|\ge 2$ and $|P_2|\ge 2$, then  $w({\P})\ge 10$; and
\item[(b)] if $|P_1|\ge 2$ and $|P_2|\ge 3$, then  $w({\P})\ge 13$.
\end{itemize}
\end{claim}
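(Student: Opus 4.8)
The plan is to argue by contradiction on the value $w_G(\P)$, exactly as in the proofs of Claims~\ref{CL: nostrongZ5} and~\ref{CL: nontrivialge4}: if $w_G(\P)$ were too small, I would refine one of the non-singleton parts $P_i$ to produce the induced subgraph $H=G[P_i]$, show that $w(H)$ is forced to be large, and then invoke Lemma~\ref{LEM: smallH} to conclude $H$ is strongly $\Z_5$-connected, contradicting Claim~\ref{CL: nostrongZ5}. The engine throughout is the exchange identity of Lemma~\ref{mod5-key-lem}, namely Eq.~(\ref{EQ: wH}) applied to the part being refined, rewritten as $w_H(\Q)=w_G(\Q\cup(\P\setminus\{P_i\}))-w_G(\P)+8$: every lower bound on $w_G$ of a refinement of $\P$ coming from Claim~\ref{CL: nontrivialge4} translates directly into a lower bound on $w_H(\Q)$. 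In both parts $H=G[P_i]$ is a proper planar subgraph of $G$ (the other large part is nonempty), so $|H|+\|H\|<|G|+\|G\|$ and Lemma~\ref{LEM: smallH} applies.

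For part (a), I would suppose $w_G(\P)\le 9$ and set $H=G[P_1]$. For every partition $\Q$ of $V(H)$, the refinement $\Q\cup(\P\setminus\{P_1\})$ still contains $P_2$ with $|P_2|\ge 2$, so it is nontrivial and Claim~\ref{CL: nontrivialge4}(a) gives $w_G(\Q\cup(\P\setminus\{P_1\}))\ge 5$. Plugging into the exchange identity yields $w_H(\Q)\ge 5-9+8=4$ for every $\Q$, i.e.\ $w(H)\ge 4$. Then Lemma~\ref{LEM: smallH}(c) forces $H$ to be strongly $\Z_5$-connected, and since $|H|=|P_1|\ge 2$ this contradicts Claim~\ref{CL: nostrongZ5}. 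Hence $w_G(\P)\ge 10$.

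For part (b), I would suppose $w_G(\P)\le 12$ and refine the larger part, $H=G[P_2]$ with $|P_2|\ge 3$. If $\Q$ is a \emph{nontrivial} partition of $V(H)$, then $\Q\cup(\P\setminus\{P_2\})$ has two non-singleton parts, namely a part of $\Q$ and the part $P_1$, so part (a) gives $w_G(\Q\cup(\P\setminus\{P_2\}))\ge 10$, whence $w_H(\Q)\ge 10-12+8=6$. For the trivial partition $\Q^*$ of $V(H)$, the refinement $\Q^*\cup(\P\setminus\{P_2\})$ is nontrivial (it still contains $P_1$) and is not $\{V(G)\}$, so Claim~\ref{CL: nontrivialge4}(a) gives it weight at least $5$, and hence $w_H(\Q^*)\ge 5-12+8=1$. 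Thus $w(H)\ge 1$ and $w_H(\Q)\ge 4$ for every nontrivial $\Q$. Since $|P_2|\ge 3$ we have $H\notin\{2K_2,3K_2\}$, and since $w(H)\ge 1>0=w(T_{1,3,3})=w(T_{2,2,3})$ we have $H\notin\{T_{1,3,3},T_{2,2,3}\}$; so Lemma~\ref{LEM: smallH}(a) forces $H$ to be strongly $\Z_5$-connected, again contradicting Claim~\ref{CL: nostrongZ5}. Hence $w_G(\P)\ge 13$.

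The main obstacle is part (b): unlike in (a), refining $P_2$ does not directly force $w(H)\ge 4$, because the trivial refinement of $P_2$ can combine with the remaining singletons of $\P$ to form a merely almost-trivial partition of $G$, for which Claim~\ref{CL: nontrivialge4} guarantees only weight $5$ (not $8$), leaving $w_H(\Q^*)\ge 1$ rather than $\ge 4$. This is exactly why the argument must route through Lemma~\ref{LEM: smallH}(a)---which requires only $w(H)\ge 0$ together with $w_H\ge 4$ on \emph{nontrivial} partitions---rather than through Lemma~\ref{LEM: smallH}(c), and why the four small exceptional graphs must be excluded by hand using the crude bound $w(H)\ge 1$. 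The only routine points to verify are that the relevant $G[P_i]$ is a proper subgraph (so minimality of $G$ is available) and that combining a nontrivial $\Q$ with $P_1$ genuinely produces two distinct non-singleton parts.
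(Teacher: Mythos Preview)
Your proof is correct. Part~(a) is identical to the paper's argument. For part~(b), however, you take a different and somewhat longer route than the paper.

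The paper refines $P_1$ (the part with $|P_1|\ge 2$), not $P_2$. The point is that any refinement $\Q\cup(\P\setminus\{P_1\})$ still contains the part $P_2$ with $|P_2|\ge 3$, so the refinement is automatically \emph{normal} (not merely nontrivial). Then Claim~\ref{CL: nontrivialge4}(b) gives $w_G(\Q\cup(\P\setminus\{P_1\}))\ge 8$ for \emph{every} partition $\Q$ of $H=G[P_1]$, so the exchange identity yields $w_H(\Q)\ge 8-12+8=4$ uniformly, and Lemma~\ref{LEM: smallH}(c) finishes immediately---no case split, no appeal to Lemma~\ref{LEM: smallH}(a), and no need to exclude $\{2K_2,3K_2,T_{1,3,3},T_{2,2,3}\}$ by hand. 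What you identified as the ``main obstacle'' (the trivial refinement of $P_2$ yielding only an almost-trivial partition of $G$) simply does not arise when one refines the smaller part instead. Your approach works too---it bootstraps off the freshly proved part~(a) to get $w_H(\Q)\ge 6$ on nontrivial partitions and then routes through Lemma~\ref{LEM: smallH}(a)---but the paper's choice of which part to refine makes the argument a one-liner.
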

\begin{proof}
Let $H=G[P_1]$ and ${\Q}=\{Q_1, Q_2,\dots, Q_s\}$ be a partition of $H$. Let
$\P'= {\Q}\cup ({\P}\setminus\{P_1\})$.  Note that if $|P_2|\ge 2$, then the
refinement $\P'$ is nontrivial, and if $|P_2|\ge 3$, then $\P'$ is normal.
By Eq.~(\ref{EQ: wH}),
\begin{align*}
w_G(\P') &= w_H({\Q})+ w_G({\P})-8.
\end{align*}

(a) If $w_G(\P)\le 9$, then $w_H(\Q)\ge 4$ for any  partition $\Q$ of $H$ since
$w_G(\P')\ge 5$ by Claim~\ref{CL: nontrivialge4}(a).
So $H$ is strongly $\Z_5$-connected by Lemma \ref{LEM: smallH}(c), which
contradicts Claim \ref{CL: nostrongZ5}.

(b) Similar to (a), if $w_G({\P})\le 12$, then $w_H({\Q})\ge 4$ for any
partition ${\Q}$ of $H$ since $w_G(\P')\ge 8$ by Claim~\ref{CL: nontrivialge4}(b).
Again $H$ is strongly $\Z_5$-connected by Lemma \ref{LEM: smallH}(c), which
contradicts Claim \ref{CL: nostrongZ5}.
\end{proof}

\begin{claim}\label{CL: ess7}
Let $[X, X^c]$ be an edge cut of $G$.
\begin{itemize}
\item[(a)] Now $|[X,X^c]|\ge 6$. That is, $G$ is $6$-edge-connected.
\item[(b)] If $|X|\ge 2$ and $|X^c|\ge 3$, then $|[X,X^c]|\ge 8$.
\end{itemize}
\end{claim}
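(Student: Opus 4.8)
Both parts will follow immediately by feeding the two‑part partition $\P=\{X,X^c\}$ into the weight inequalities already proved in Claims~\ref{CL: nontrivialge4} and~\ref{CL: 2nontrivialge6}. The one point that needs a moment's care is confirming that $\{X,X^c\}$ is a \emph{normal} partition, so that the stronger bound of Claim~\ref{CL: nontrivialge4}(b) is available: since $|G|\ge 4$ by Claim~\ref{CL: nostrongZ5}(b), a partition into $t=2$ parts is not trivial (that needs $t=|G|\ge 4$), not almost trivial (that needs $t=|G|-1\ge 3$), and is not $\{V(G)\}$, so it is normal. Also, because $X$ and $X^c$ are the two sides of one edge cut, $d(X)=d(X^c)=|[X,X^c]|$, and hence $w_G(\{X,X^c\})=2|[X,X^c]|-11\cdot 2+19=2|[X,X^c]|-3$.

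For part (a): I would apply Claim~\ref{CL: nontrivialge4}(b) to the normal partition $\{X,X^c\}$ to get $w_G(\{X,X^c\})\ge 8$, i.e. $2|[X,X^c]|-3\ge 8$. Since $|[X,X^c]|$ is a nonnegative integer, $2|[X,X^c]|\ge 11$ forces $|[X,X^c]|\ge 6$, which is exactly the assertion that $G$ is $6$-edge-connected. (As a byproduct, taking $X$ to be a single vertex records $\delta(G)\ge 6$; the normality argument only used $|G|\ge 4$, so nothing breaks when $|X|=1$.)

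For part (b): here $|X|\ge 2$ and $|X^c|\ge 3$, so setting $P_1=X$ and $P_2=X^c$ matches the hypothesis of Claim~\ref{CL: 2nontrivialge6}(b), which gives $w_G(\{X,X^c\})\ge 13$; then $2|[X,X^c]|-3\ge 13$ yields $|[X,X^c]|\ge 8$. I do not expect any real obstacle: the whole argument is just a substitution into earlier inequalities, and the only thing to watch is that the size hypotheses on $X,X^c$ line up with the exact forms required by Claims~\ref{CL: nontrivialge4}(b) and~\ref{CL: 2nontrivialge6}(b).
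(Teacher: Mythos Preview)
Your proof is correct and is essentially identical to the paper's own argument: both feed the two-part partition $\{X,X^c\}$ into Claim~\ref{CL: nontrivialge4}(b) for part~(a) and into Claim~\ref{CL: 2nontrivialge6}(b) for part~(b), after noting that $|G|\ge 4$ makes $\{X,X^c\}$ normal. Your extra line justifying normality in detail and your remark that integrality turns $2|[X,X^c]|\ge 11$ into $|[X,X^c]|\ge 6$ are fine elaborations but do not change the route.
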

\begin{proof}
If $[X, X^c]$ is an edge cut of $G$, then $\P=\{X, X^c\}$ is a
partition of $V(G)$.
(a) Clearly ${\cal P}$ is normal, since
$|G|\ge 4$ by Claim~\ref{CL: nostrongZ5}(b). Now Claim \ref{CL:
nontrivialge4}(b) implies $8\le w_G(\P)=2|[X,X^c]|-22+19$, which yields
$|[X,X^c]|\ge 6$.
 (b) If $|X|\ge 2$ and $|X^c|\ge 3$, then $w(\P)\ge 13$ by Claim \ref{CL:
2nontrivialge6}(b).
So $13\le w_G(\P)=2|[X,X^c]|-22+19$, which implies $|[X,X^c]|\ge 8$.
\end{proof}

Next we show that $G$ contains no copy of any graph in Figure~\ref{FIG: W123}
below.  We write $H\dit$\aside{$H\dit$, $H\diit$} to denote the graph formed
from $H$ by subdividing one copy of an edge of maximum multiplicity.  So, for
example, $4K_2\dit=T_{1,1,3}$.  We write $H\diit$ to denote $(H\dit)\dit$.
(The reader may think of the $\circ$ as representing the new 2-vertex.)

\begin{claim}\label{CL: noW1}
$G$ has no copy of $T_{1,1,3}$.
\end{claim}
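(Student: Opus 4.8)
The plan is a lift-and-contract argument producing a strictly smaller instance, contradicting the minimality of $G$. Suppose $G$ contains a copy of $T_{1,1,3}$ on vertices $v_1,v_2,v_3$ with $\mu(v_1v_2),\mu(v_1v_3)\ge 1$ and $\mu(v_2v_3)\ge 3$, so that $v_1$ is the degree-$2$ vertex of the copy. First I would form $G'$ from $G$ by lifting one edge $v_1v_2$ together with one edge $v_1v_3$; since $v_2v_3$ is already an edge, this preserves planarity, and now $\mu_{G'}(v_2v_3)\ge 4$, so $G'$ contains a copy $H$ of $4K_2$ on $\{v_2,v_3\}$. Set $G''=G'/H$. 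Then $G''$ is planar and $|G''|+\|G''\|<|G|+\|G\|$, so Theorem~\ref{THM: Main1} and Lemma~\ref{LEM: smallH} are available for $G''$. The goal is to show $G''$ is strongly $\Z_5$-connected; then, since $4K_2$ is strongly $\Z_5$-connected by Lemma~\ref{LEM: 4K2J2K4inSZ5}, Lemma~\ref{reduc-lem}(ii) makes $G'$ strongly $\Z_5$-connected, and undoing the lift (replace the lifted edge $v_2v_3$ by the directed path $v_2\to v_1\to v_3$, which preserves every boundary value) makes $G$ strongly $\Z_5$-connected. This contradicts the fact that, as a counterexample to Theorem~\ref{THM: Main1}, $G$ admits no $(\Z_5,\beta)$-orientation for some $\Z_5$-boundary $\beta$.

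To apply Lemma~\ref{LEM: smallH}(b) to $G''$ I need two estimates. \textbf{Edge-connectivity.} Every edge cut of $G''$ pulls back to an edge cut of $G'$ of the same size with $v_2,v_3$ on one side; the only edges the lift removes from $G$ are $v_1v_2$ and $v_1v_3$, so this cut of $G'$ has size at least (the size of the corresponding cut of $G$) minus $2$, which is at least $6-2=4$ since $G$ is $6$-edge-connected by Claim~\ref{CL: ess7}(a). Hence every edge cut of $G''$ has size at least $4$. \textbf{Weight.} A partition $\P''$ of $V(G'')$ corresponds to a partition $\widetilde{\P}$ of $V(G)$ in which $v_2$ and $v_3$ lie in a common part $B$, and (by a computation in the spirit of Lemma~\ref{mod5-key-lem}) $w_{G''}(\P'')=w_{G'}(\widetilde{\P})$, which equals $w_G(\widetilde{\P})$ if $v_1\in B$ and $w_G(\widetilde{\P})-4$ if $v_1\notin B$. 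If $v_1\in B$ then $|B|\ge 3$, so $\widetilde{\P}$ is normal or equals $\{V(G)\}$ and $w_G(\widetilde{\P})\ge 8$ by Claim~\ref{CL: nontrivialge4}(b). If $v_1\notin B$ then $\widetilde{\P}$ is nontrivial: either it is normal, giving $w_G(\widetilde{\P})\ge 8$ and $w_{G''}(\P'')\ge 4$, or it is almost trivial with $B=\{v_2,v_3\}$, giving $w_G(\widetilde{\P})\ge 5$ by Claim~\ref{CL: nontrivialge4}(a) and $w_{G''}(\P'')\ge 1$. So $w(G'')\ge 1$, and now Lemma~\ref{LEM: smallH}(b) shows $G''$ is strongly $\Z_5$-connected, completing the argument.

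I expect the only delicate point to be the middle paragraph: correctly bookkeeping how edge cuts and the weight function $w$ transform under a single lifting followed by contraction of a $4K_2$, and invoking exactly the right lower bounds from Claim~\ref{CL: nontrivialge4}. In particular one must verify that the ``$-4$'' in the weight transfer occurs precisely when $v_1$ is separated from $\{v_2,v_3\}$, and note that in the worst case $w(G'')$ may be only $1$ — which is why the proof routes through the $4$-edge-connectivity of $G''$ together with Lemma~\ref{LEM: smallH}(b), rather than through Lemma~\ref{LEM: smallH}(c) or Theorem~\ref{THM: Main1} directly (the latter would force a separate check of the degenerate case $|G|=4$).
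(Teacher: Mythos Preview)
Your proposal is correct and follows essentially the same approach as the paper: lift the edge pair at the degree-$2$ vertex of the $T_{1,1,3}$, contract the resulting $4K_2$, verify the quotient is $4$-edge-connected (via Claim~\ref{CL: ess7}) with $w\ge 1$ (via Claim~\ref{CL: nontrivialge4}), and conclude with Lemma~\ref{LEM: smallH}(b). The only cosmetic difference is that the paper computes the weight of the trivial partition of the quotient directly from the trivial partition of $G$ (getting $w(G)-2(5)+11\ge 1$), whereas you route through the almost-trivial partition of $G$ and Claim~\ref{CL: nontrivialge4}(a); both yield the same bound.
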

\begin{proof}
Suppose $G$ contains a copy of $T_{1,1,3}$, with vertices $x,y,z$ and
$\mu(xy)=3$. We lift $xz, zy$ to become a new edge $xy$ and then contract the
corresponding $4K_2$ (contract $xy$). Let $G'$ denote the resulting graph.
The trivial partition ${\cal Q}^*$ of $G'$ satisfies $w_{G'}({\cal Q}^*)\ge
w(G)-2(5)+11\ge 1$. Every nontrivial partition ${\Q'}$ of $G'$
corresponds to a normal partition $\Q$ of $G$ in which the contracted vertex is
replaced by $\{x, y\}$. Since $xz, zy$ are the only two edges possibly counted
in $w_{G}({\Q})$ but not in $w_{G'}({\Q'})$, we have
$w_{G'}({\Q'})\ge w_{G}({\Q})- 4\ge 4$, by Claim~\ref{CL:
nontrivialge4}(b). Thus $w(G')\ge 1$.  By Claim \ref{CL: ess7}, $G$ is
$6$-edge-connected, so $G'$ is $4$-edge-connected.
Thus $G'$ is strongly $\Z_5$-connected, by Lemma \ref{LEM: smallH}(b).
This is a lifting reduction of the first type, so $G$ is strongly
$\Z_5$-connected, which is a contradiction.
\end{proof}

\begin{figure}[t]

\setlength{\unitlength}{0.08cm}

\begin{center}

\begin{picture}(130,40)
\put(0,0){\circle*{2}}\put(30,0){\circle*{2}}\put(15,20){\circle*{2}}
\qbezier(0, 0)(15, 6)(30, 0)\qbezier(0, 0)(15, -6)(30, 0)\qbezier(0, 0)(15, 0)(30, 0)\qbezier(0, 0)(0, 0)(15, 20)\qbezier(30, 0)(15, 20)(15, 20)

\put(50,0){\circle*{2}}\put(70,0){\circle*{2}}\put(50,20){\circle*{2}} \put(70,20){\circle*{2}} \put(60,32){\circle*{2}}

\qbezier(50, 0)(70, 0)(70, 0)\qbezier(50, 0)(60, -4)(70, 0)\qbezier(50, 0)(60, 4)(70, 0)\qbezier(50, 0)(50, 10)(50, 20)\qbezier(50, 0)(46, 10)(50, 20)\qbezier(50, 0)(54, 10)(50, 20)\qbezier(70, 20)(60, 17)(50, 20)\qbezier(70, 20)(60, 23)(50, 20)\qbezier(70, 20)(66, 10)(70, 0)\qbezier(70, 20)(70, 10)(70, 0)\qbezier(70, 20)(74, 10)(70, 0)\qbezier(70, 20)(70, 20)(60, 32)\qbezier(50, 20)(50, 20)(60, 32)

\put(100,0){\circle*{2}}\put(130,0){\circle*{2}}\put(115,20){\circle*{2}} \put(133,20){\circle*{2}}\put(97,20){\circle*{2}}
\qbezier(100, 0)(115, 4)(130, 0)\qbezier(100, 0)(115, -4)(130, 0)\qbezier(100, 0)(105, 10)(115, 20)\qbezier(100, 0)(110, 10)(115, 20)\qbezier(130, 0)(120, 10)(115, 20)\qbezier(130, 0)(125, 10)(115, 20)
\qbezier(100, 0)(98.5, 10)(97, 20)\qbezier(115, 20)(106, 20)(97, 20)\qbezier(133, 20)(124, 20)(115, 20)\qbezier(130, 0)(131.5, 10)(133, 20)

\put(11,-10){\footnotesize{$T_{1,1,3}$}}\put(57,-10){\footnotesize{$3C_4\dit$}}\put(111,-10){\footnotesize{$T_{2,3,3}\diit$}}
\end{picture}
\end{center}
\vspace{0.4cm}
\caption{The graphs $T_{1,1,3}, 3C_4\dit, T_{2,3,3}\diit$.}
\label{FIG: W123}
\end{figure}

\begin{claim}\label{CL: noW2}
$G$ has no copy of $3C_4\dit$.
\end{claim}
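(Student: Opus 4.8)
The plan is to imitate the proof of Claim~\ref{CL: noW1}: perform a lifting reduction of the first type that ``un-subdivides'' the $2$-vertex of the $3C_4\dit$, thereby exposing a copy of $3C_4$, and then contract that copy, which is strongly $\Z_5$-connected by Lemma~\ref{LEM: 4K2J2K4inSZ5}. So suppose $G$ contains a copy of $3C_4\dit$ with vertices $v_1,v_2,v_3,v_4,x$, where $x$ is the subdivision vertex adjacent to $v_1$ and $v_2$, and where $\mu(v_1v_2)\ge 2$ and $\mu(v_2v_3),\mu(v_3v_4),\mu(v_4v_1)\ge 3$ in $G$. First I would lift the pair $v_1x,xv_2$ to a new edge $v_1v_2$; this is permitted because $v_1v_2\in E(G)$, and it preserves planarity. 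In the resulting graph $G_1$ we have $\mu(v_1v_2)\ge 3$, so $G_1$ contains a copy $H$ of $3C_4$ on $\{v_1,v_2,v_3,v_4\}$. Set $G'=G_1/H$ and let $w$ be the vertex created by contracting $H$. Since a valid lift and a contraction each only decrease the edge count, while $|G'|=|G|-3$, we get $|G'|+\|G'\|<|G|+\|G\|$, and $G'$ is planar.

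The crux is to show $w(G')\ge 4$, since then Lemma~\ref{LEM: smallH}(c) yields that $G'$ is strongly $\Z_5$-connected. For this I would take any partition $\Q'$ of $V(G')$ with $\Q'\neq\{V(G')\}$ and pull it back to the partition $\Q$ of $V(G)$ obtained by replacing $w$ with the four vertices $v_1,v_2,v_3,v_4$. Then $\Q$ has a part of size at least $4$ and at least two parts in total, so $\Q$ is a normal partition of $G$ (it is neither trivial nor almost trivial, and it is not $\{V(G)\}$); hence $w_G(\Q)\ge 8$ by Claim~\ref{CL: nontrivialge4}(b). The only edges of $G$ absent from $G'$ are $v_1x$, $xv_2$, and the edges of $G$ lying inside $\{v_1,v_2,v_3,v_4\}$; the latter all lie in the single part of $\Q$ containing $w$, so they contribute $0$ to $\sum_i d_G(Q_i)$, and $v_1x$ together with $xv_2$ contribute at most $4$, while every other edge contributes equally to $\sum_i d_G(Q_i)$ and $\sum_i d_{G'}(Q_i')$. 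Since $\Q$ and $\Q'$ have the same number of parts, this gives $w_{G'}(\Q')\ge w_G(\Q)-4\ge 4$. As $w_{G'}(\{V(G')\})=0-11+19=8$ as well, we conclude $w(G')\ge 4$. (Note that, in contrast to Claim~\ref{CL: noW1}, here even the trivial partition of $G'$ pulls back to a \emph{normal} partition of $G$ rather than to an almost trivial one, so Lemma~\ref{LEM: smallH}(c) applies directly and I do not need to bound the edge-connectivity of $G'$.)

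To finish, reverse the reductions: since $H=3C_4$ is strongly $\Z_5$-connected and $G'=G_1/H$ is strongly $\Z_5$-connected, Lemma~\ref{reduc-lem}(ii) gives that $G_1$ is strongly $\Z_5$-connected, and undoing the lift---in any $\beta$-orientation of $G_1$, replace the directed edge $v_1v_2$ by the directed path through $x$---shows $G$ is strongly $\Z_5$-connected. Then $G$ admits a $\beta$-orientation for every $\Z_5$-boundary $\beta$, contradicting the choice of $G$ as a counterexample to Theorem~\ref{THM: Main1}. I expect the main obstacle to be the weight bookkeeping in the second paragraph: contracting $H$ also deletes the ``diagonal'' edges $v_1v_3$, $v_2v_4$ and any surplus $v_1v_2$ edge, so the edge count of $G'$ is smaller than one might first guess; what rescues the argument is that all of these edges lie inside the part of $\Q$ containing $w$, so they are never counted in $w_G(\Q)$ and therefore do not weaken the inequality $w_{G'}(\Q')\ge w_G(\Q)-4$.
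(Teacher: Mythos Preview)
Your proof is correct and follows essentially the same lifting-and-contract strategy as the paper's own argument. The one organizational difference is how the trivial partition of $G'$ is handled: the paper treats it separately via the computation $w_{G'}(\Q^*)\ge w(G)-2(13)+3(11)\ge 7$ (comparing to the trivial partition of $G$), whereas you observe that even the trivial partition of $G'$ pulls back to a \emph{normal} partition of $G$ (since the part containing $w$ has size at least~4), so Claim~\ref{CL: nontrivialge4}(b) applies uniformly to every $\Q'\neq\{V(G')\}$. This is a small but genuine simplification---you avoid the ad hoc arithmetic and, as you note, you never need to check the edge-connectivity of $G'$ to invoke Lemma~\ref{LEM: smallH}(c). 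Your concern about diagonal edges $v_1v_3,v_2v_4$ is in fact moot here (any such edge together with the multiplicity-$3$ edges would yield a forbidden $T_{1,1,3}$), but your handling of them is correct regardless.
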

\begin{proof}
Suppose $G$ contains a copy of $3C_4\dit$, with vertices $v_1, v_2, v_3, v_4,
z$, where $z$ is a 2-vertex with $N(z)=\{v_1,v_2\}$.
We lift $v_1z, zv_2$ to become a new edge
$v_1v_2$ and then contract the corresponding $3C_4$ to obtain the graph $G'$.
For the trivial partition ${\Q}^*$ of $G'$, we have $w_{G'}({\Q}^*)\ge
w(G)-2(13)+3(11)\ge 7$. For every nontrivial partition ${\Q'}$ of $G'$,  we have
$w_{G'}({\Q'})\ge w_{G}({\Q})- 4\ge 4$ for the same reason as in the previous claim.
Thus $w(G')\ge 4$, so $G'$ is strongly 
$\Z_5$-connected by Lemma~\ref{LEM: smallH}(c).
This is a lifting reduction of the first type.
Hence $G$ is strongly $\Z_5$-connected, which contradicts Claim~\ref{CL:
nostrongZ5}.
\end{proof}

Now we can slightly strengthen Claim~\ref{CL: nontrivialge4}(b).

\begin{claim}
\label{CL: noalmostge9}
Every normal partition ${\P}=\{P_1, P_2,\dots, P_t\}$ satisfies
  $$w({\P})\ge 9.$$
\end{claim}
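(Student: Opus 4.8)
The plan is to upgrade Claim~\ref{CL: nontrivialge4}(b): that claim already gives $w_G(\P)\ge 8$ for every normal partition $\P$, so it suffices to show that $w_G(\P)=8$ is impossible. Assume for contradiction that $\P=\{P_1,\dots,P_t\}$ is a normal partition with $w_G(\P)=8$.

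First I would split off the easy case. If $\P$ has two parts of size at least $2$, then $w_G(\P)\ge 10$ by Claim~\ref{CL: 2nontrivialge6}(a), a contradiction. Since $\P$ is normal it is neither trivial nor almost trivial, so the only remaining possibility is that $\P$ has a single nontrivial part, say $P_1$, with $|P_1|\ge 3$. Put $H=G[P_1]$; then $H$ is planar and $|H|+\|H\|<|G|+\|G\|$, so the reducibility machinery (Lemma~\ref{LEM: smallH}) applies to $H$.

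The heart of the argument is to push the weight hypothesis from $G$ down to $H$. Since $P_2,\dots,P_t$ are all singletons, for every partition $\Q$ of $V(H)$ the partition $\Q\cup(\P\setminus\{P_1\})$ of $V(G)$ refines $\P$, and substituting $w_G(\P)=8$ into Eq.~(\ref{EQ: wH}) yields the clean identity $w_H(\Q)=w_G(\Q\cup(\P\setminus\{P_1\}))$. If $\Q$ is nontrivial, then $\Q\cup(\P\setminus\{P_1\})$ is a nontrivial partition of $G$, so Claim~\ref{CL: nontrivialge4}(a) gives $w_H(\Q)\ge 5$; and for the trivial partition $\Q^*$ of $H$, the partition $\Q^*\cup(\P\setminus\{P_1\})$ is exactly the trivial partition of $G$, so $w_H(\Q^*)\ge w(G)\ge 0$. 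Hence $w(H)\ge 0$ and every nontrivial partition of $H$ has weight at least $4$; moreover $|H|=|P_1|\ge 3$ rules out $H\in\{2K_2,3K_2\}$, so Lemma~\ref{LEM: smallH}(a) forces $H$ to be either strongly $\Z_5$-connected or isomorphic to $T_{1,3,3}$ or $T_{2,2,3}$.

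To finish, I would observe that a strongly $\Z_5$-connected subgraph $H$ with $|H|\ge 2$ is forbidden by Claim~\ref{CL: nostrongZ5}, while each of $T_{1,3,3}$ and $T_{2,2,3}$ contains a copy of $T_{1,1,3}$ (discard parallel edges to bring the three edge multiplicities down to $1,1,3$), which would force a copy of $T_{1,1,3}$ in $G$, contradicting Claim~\ref{CL: noW1}. Either branch gives a contradiction, so $w_G(\P)\ge 9$. I expect the only point requiring care to be checking that $H$ genuinely inherits the hypotheses of Lemma~\ref{LEM: smallH}(a) and then correctly eliminating the two exceptional triangles via $T_{1,1,3}$-freeness; the weight bookkeeping itself collapses to the displayed identity once $w_G(\P)$ is pinned to $8$, so no serious obstacle remains.
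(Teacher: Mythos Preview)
Your proposal is correct and follows essentially the same route as the paper's proof: assume $w_G(\P)=8$, use Claim~\ref{CL: 2nontrivialge6}(a) to force a unique nontrivial part $P_1$ with $|P_1|\ge 3$, apply Eq.~(\ref{EQ: wH}) to transfer weight bounds to $H=G[P_1]$, invoke Lemma~\ref{LEM: smallH}(a), and eliminate the exceptional triangles $T_{1,3,3},T_{2,2,3}$ via the $T_{1,1,3}$-freeness of Claim~\ref{CL: noW1}. The paper's argument is the same, only phrased slightly more tersely.
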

\begin{proof}
  Let ${\P}=\{P_1, P_2,\dots, P_t\}$ be a normal partition of $G$ with
$|P_1|>1$. Suppose to the contrary that $w({\P})=8$, by Claim~\ref{CL:
nontrivialge4}(b). Now $|P_1|\ge 3$ and $|P_2|=\ldots=|P_t|=1$, by
Claim~\ref{CL: 2nontrivialge6}(a). As in Claim~\ref{CL: nontrivialge4},
let $H=G[P_1]$, let ${\Q}=\{Q_1, Q_2,\dots, Q_s\}$ be a partition of $H$, and
let $\P'={\Q}\cup ({\P}\setminus\{P_1\})$ be a refinement of $\P$.
Eq.~(\ref{EQ: wH}) implies
\begin{eqnarray*}
 w_H({\Q})= w_G(\P') - w_G({\P})+8=w_G(\P').
\end{eqnarray*}
If ${\Q}$ is a nontrivial partition of $H$, then $\P'$ is nontrivial in $G$, so
$w_H({\Q})= w_G(\P')\ge 5$, by Claim~\ref{CL: nontrivialge4}(a). If $\Q$ is
the trivial partition of $H$, then $w_H(\Q)= w_G(\P')\ge 0$.  Since
$|H|=|P_1|\ge 3$, we know $H\notin\{2K_2,3K_2\}$.  And since $G$ has no copy of
$T_{1,1,3}$, by Claim \ref{CL: noW1}, we know $H\notin\{T_{1,3,3}, T_{2,2,3}\}$.
Now Lemma \ref{LEM: smallH}(a) implies that $H$ is strongly $\Z_5$-connected,
which contradicts Claim~\ref{CL: nostrongZ5}.
\end{proof}

Claim~\ref{CL: noalmostge9} allows us to also prove that the third graph
in Figure~\ref{FIG: W123} is reducible.

\begin{claim}\label{CL: noW3}
$G$ has no copy of $T_{2,3,3}\diit$.
\end{claim}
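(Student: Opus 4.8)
The plan is to adapt the arguments of Claims~\ref{CL: noW1} and~\ref{CL: noW2}, now using the stronger bound of Claim~\ref{CL: noalmostge9}. Suppose $G$ contains a copy of $T_{2,3,3}\diit$. Write $v_1,v_2,v_3$ for its three ``corner'' vertices, so that in the copy $\mu(v_1v_2)=\mu(v_1v_3)=\mu(v_2v_3)=2$, and let $z_1,z_2$ be the two subdivision vertices, with $N(z_1)=\{v_1,v_3\}$ and $N(z_2)=\{v_2,v_3\}$ in the copy, each joined to its two neighbours by a single edge. (Since $\delta(G)\ge 6$ by Claim~\ref{CL: ess7}(a), the vertices $z_1,z_2$ need not be $2$-vertices of $G$.) First I would perform two lifting reductions of the first type: lift the pair $v_1z_1,z_1v_3$ to a new edge $v_1v_3$ and lift the pair $v_2z_2,z_2v_3$ to a new edge $v_2v_3$; both liftings are legal and preserve planarity because $v_1v_3$ and $v_2v_3$ already exist. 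In the resulting graph $G_1$, the set $\{v_1,v_2,v_3\}$ spans a copy of $T_{2,3,3}$, which is strongly $\Z_5$-connected by Lemma~\ref{LEM: 4K2J2K4inSZ5}; contracting it gives a graph $G'$ with new vertex $w$ (and with $z_1,z_2$, still of degree at least $4$, surviving). Note $G'$ is planar and $|G'|+\|G'\|<|G|+\|G\|$.

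It then suffices to show $G'$ is strongly $\Z_5$-connected: then Lemma~\ref{reduc-lem}(ii), applied to $G_1$ and its subgraph $T_{2,3,3}$, makes $G_1$ strongly $\Z_5$-connected, and undoing the two liftings makes $G$ strongly $\Z_5$-connected, contradicting Claim~\ref{CL: nostrongZ5}. I would deduce this from Lemma~\ref{LEM: smallH}(b), by checking that $w(G')\ge 1$ and that $G'$ is $4$-edge-connected.

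For the weight: a degree count shows that for every partition $\Q'$ of $G'$, writing $\Q$ for the partition of $G$ obtained by replacing $w$ by $\{v_1,v_2,v_3\}$ and keeping $z_1,z_2$ where they lie, one has $w_{G'}(\Q')\ge w_G(\Q)-8$, the only edges of $G$ that can be counted in $w_G(\Q)$ but not in $w_{G'}(\Q')$ being $v_1z_1,z_1v_3,v_2z_2,z_2v_3$. Since $\Q$ always has a part of size at least $3$, it is either $\{V(G)\}$ (and then $w_{G'}(\Q')=8$) or normal, in which case $w_G(\Q)\ge 9$ by Claim~\ref{CL: noalmostge9}, so $w_{G'}(\Q')\ge 1$. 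Either way $w_{G'}(\Q')\ge 1$, hence $w(G')\ge 1$. This is exactly where the value $9$ of Claim~\ref{CL: noalmostge9} is needed; the value $8$ of Claim~\ref{CL: nontrivialge4}(b) would not suffice.

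For the $4$-edge-connectivity: each lifting lowers the size of any prescribed edge cut by at most $2$, so a cut of $G'$ of size less than $4$ would have to arise from a cut $[A,B]$ of $G$ that is lowered by $2$ at \emph{both} liftings; this forces $\{v_1,v_2,v_3\}$ into one side and $\{z_1,z_2\}$ into the other, so $|A|\ge 2$ and $|B|\ge 3$, and then $|[A,B]_G|\ge 8$ by Claim~\ref{CL: ess7}(b), making the cut of $G'$ have size at least $8-4=4$ after all --- a contradiction. (Cuts of $G'$ reduced at most once have size at least $6-2=4$.) So $G'$ is $4$-edge-connected, Lemma~\ref{LEM: smallH}(b) applies, and the proof is complete. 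I expect this last step --- restoring $4$-edge-connectivity after performing two liftings --- to be the main obstacle: unlike Claims~\ref{CL: noW1} and~\ref{CL: noW2}, which lift only one subdivision vertex, here two liftings are needed, and the observation that the worst-affected cuts of $G$ are precisely the large ones (via Claim~\ref{CL: ess7}(b)) is the crucial new point; the weight bookkeeping is otherwise routine.
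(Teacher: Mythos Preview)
Your proof is correct and follows essentially the same approach as the paper: lift the two subdivision paths, contract the resulting $T_{2,3,3}$, and verify that the quotient $G'$ is $4$-edge-connected with $w(G')\ge 1$ so that Lemma~\ref{LEM: smallH}(b) applies. The only cosmetic difference is that the paper bounds the trivial partition of $G'$ directly as $w_{G'}(\Q^*)\ge w(G)-2(10)+11(2)\ge 2$, whereas you route it through the normal-partition bound of Claim~\ref{CL: noalmostge9} to get $\ge 1$; both suffice.
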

\begin{proof}
Suppose $G$ contains a copy of $T_{2,3,3}\diit$ with vertices $w,x,y,z_1,z_2$, where
$z_1$ and $z_2$ are 2-vertices with $N(z_1)=\{w,x\}$ and $N(z_2)=\{x,y\}$.
We lift $wz_1, z_1x$ to become a new edge $wx$,
and lift $xz_2, z_2y$ to become a new edge $xy$.  Now $\{w,x,y\}$ induces a copy
of $T_{2,3,3}$, so we contract $\{w,x,y\}$ to form a graph $G'$. Since
$\delta(G)\ge 6$ by Claim~\ref{CL: ess7}(a), we have $\delta(G')\ge 4$.  The size
of each edge cut decreases at most $4$ from $G$ to $G'$, and it decreases at
least $3$ only if that edge cut has at least two vertices on each side.
In that case Claim~\ref{CL: ess7}(b) shows the original edge cut in $G$ has
size at least $8$.  Since $G$ is $6$-edge-connected by Claim~\ref{CL: ess7},
each edge cut in $G'$ has size at least $4$, so $G'$ is $4$-edge-connected.

The trivial partition ${\Q}^*$ of $G'$ satisfies $w_{G'}({\Q}^*)\ge
w(G)-2(10)+11(2)\ge 2$. Every nontrivial partition $\Q'$ of $G'$
corresponds to a normal partition $\Q$ of $G$ in which the
contracted vertex is replaced by $\{w, x, y\}$.  So $w_{G'}({\Q'})\ge
w_{G}({\Q})-2(4)\ge 1$, by Claim \ref{CL: noalmostge9}. Thus, $G'$ is
$4$-edge-connected and $w(G')\ge 1$. By Lemma \ref{LEM: smallH}(b), $G'$ is
strongly $\Z_5$-connected. This is a lifting reduction of the first type.
Since $T_{2,3,3}$ is strongly $\Z_5$-connected by Lemma~\ref{LEM:
4K2J2K4inSZ5}, graph $G$ is strongly $\Z_5$-connected, which
contradicts Claim~\ref{CL: nostrongZ5}.
\end{proof}

\subsection{The final step: Discharging}
Now we use discharging to show that some subgraph in Figure~\ref{FIG:
4K2J2K4C4} or \ref{FIG: W123} must appear in $G$.  This contradicts one of
the claims in the previous section, and thus finishes the proof.

Fix a plane embedding of $G$.  (We assume that all parallel edges between two
vertices $v$ and $w$ are embedded consecutively, in the cyclic orders, around
both $v$ and $w$.)
Let $F(G)$ denote the set of all faces of $G$. For each face $f\in F(G)$, we
write $\ell(f)$ for its length.  A face $f$ is  a \emph{$k$-face},
\emph{$k^+$-face}, or \emph{$k^-$-face}\aaside{$k/k^+/k^-$-face}{-4mm} if
(respectively) $\ell(f)=k$, $\ell(f)\ge k$, or $\ell(f)\le k$.
A sequence of faces $f_1f_2\ldots f_s$ is called a \Emph{face chain} if, for
each $i\in\{1,\ldots,s-1\}$, faces $f_i$ and $f_{i+1}$ are adjacent, i.e., their
boundaries share a common edge. The \emph{length} of this chain is $s+1$.
Two faces $f$ and $f'$ are \Emph{weakly adjacent} if there is a face chain
$ff_1\ldots f_s f'$ such that that $f_i$ is a $2$-face for each
$i\in\{1,\ldots,s\}$. We allow $s$ to be $0$, meaning
$f$ and $f'$ are adjacent.
A \EmphE{string}{4mm} is a maximal face chain such that each of its faces is a
$2$-face. The boundary of a string consists of two edges, each of which is
incident to a $3^+$-face.
A $k$-face is called a $(t_1, t_2, \ldots, t_k)$-face if its boundary edges are
contained in strings with lengths $t_1, t_2, \ldots, t_k$. Here $t_i$
is allowed to be $1$, meaning the corresponding edge is not contained in a string.

Since $w(G)\ge 0$, we have $2\|G\|-11|G|+19\ge 0.$
By Euler's Formula, $|G|+|F(G)|-\|G\|=2$.  We solve for $|G|$ in the
equation and substitute into the inequality, which gives

\begin{align}
\sum_{f\in F(G)}\ell(f)=2\|G\|\le \frac{22}{9}|F(G)|-\frac{2}{3}.
\label{EQ: totalcharge}
\end{align}

We assign to each face $f$ initial charge $\ell(f)$. So the total charge is
strictly less than $22|F(G)|/9$.
To redistribute charge, we use the following three discharging rules.
\begin{itemize}
\item[(R1)] Each $2$-face receives charge $\frac{2}{9}$ from each weakly
adjacent $3^+$-face.
\item[(R2)] Each $(2,2,2)$-face receives charge $\frac{1}{9}$ from each weakly
adjacent $4^+$-face and $(2,1,1)$-face.
\item[(R3)] Each $(2,2,2)$-face receives charge $\frac{1}{18}$ from each weakly adjacent $(2,2,1)$-face.
\end{itemize}

If two faces are weakly adjacent through multiple edges or strings,
then the discharging rules apply for each edge and string.
After applying these rules, we claim that every face has charge at least
$\frac{22}{9}$, which contradicts Eq.~\eqref{EQ: totalcharge}.

Each $2$-face ends with $2+2(\frac29)=\frac{22}9$.
Since $G$ contains no $4K_2$ and no $T_{1,1,3}$, the charge each face sends
across each boundary edge is at most $2(\frac29)$.  Thus, when $k\ge 5$ each
$k$-face ends with at least $k-k(2(\frac{2}{9}))=\frac{5k}9\ge \frac{25}{9}$.
Since $G$ contains no $3C_4$ and no $3C_4\dit$, each $4$-face ends with at least
$4-7(\frac{2}{9})=\frac{22}{9}$. It is straightforward to check that each
$(1,1,1)$-face ends with $3$, each $(2,1,1)$-face ends with at least
$3-\frac{2}{9}-\frac{1}{9}=\frac{24}{9}$, and each $(2,2,1)$-face ends with
at least $3-2(\frac{2}{9})-2(\frac1{18})=\frac{22}{9}$.
It remains to check $(2,2,2)$-faces.

Suppose to the contrary that a $(2,2,2)$-face $xyz$ ends with less than
$\frac{22}{9}$. After (R1), face $xyz$ has $3-3(\frac29)=\frac{21}9$.  Since
$xyz$ ends with less than $\frac{22}9$, it receives at most $\frac1{18}$ by
(R2) and (R3). So $xyz$ must be adjacent to
three $3$-faces, and at most one of these is a $(2,2,1)$-face, while the
others are $(2,2,2)$-faces. By Claim~\ref{CL: noW3}, $G$ contains no
$T_{2,3,3}\diit$, so the three $3$-faces adjacent to $xyz$ must share a new
common vertex, say $w$. If one of $wx, wy, wz$ is not contained in a string,
then $xyz$ is adjacent to two $(2,2,1)$-faces, and so receives at least
$2(\frac1{18})$ by (R3), contradicting our assumption above.
Thus we assume $\mu(wx)=\mu(wy)=\mu(wz)=2$.  So $G[\{x,y,z,w\}]$ contains a
$2K_4$, contradicting Claim~\ref{CL: nostrongZ5}(a).
This shows that each $(2,2,2)$-face ends with at least $\frac{22}{9}$, which
completes the proof.

\section{Circular $7/3$-flows: Proof of Theorem~\ref{7/3-flow-thm}}
\label{Z7-sec}
In this section we prove Theorem~\ref{7/3-flow-thm}.
As in the previous section, this theorem is implied by the more technical
result, Theorem~\ref{THM: Main2}.  The proof of Theorem~\ref{THM: Main2}
is similar to that of Theorem~\ref{THM: Main1}, but with more reducible
configurations and more details.

\subsection{Preliminaries on Modulo $7$-orientations}

We define a weight function $\rho$ as follows (which is similar to $w$ in
Definition~\ref{DEF: partition}).
\begin{definition}\label{DEF: rho-partition}
Let ${\P}=\{P_1, P_2,\dots, P_t\}$ be a partition of $V(G)$.
Let $$\rho_G({\P})=\sum_{i=1}^{t}d(P_i)-17t+31$$
and $\rho(G)=\min\{\rho_G({\P}): {\P}\ is\ a\ partition\ of\ V(G)\}.$
\end{definition}

Analogous to Lemma~\ref{mod5-key-lem}, we have the following.

\begin{lemma}
  Let ${\P}=\{P_1, P_2,\dots, P_t\}$ be a partition of $V(G)$ with $|P_1|>1$.
Let $H=G[P_1]$ and let ${\Q}=\{Q_1, Q_2,\dots, Q_s\}$ be a partition of
$V(H)$.  Now ${\Q}\cup ({\P}\setminus\{P_1\})$ is a refinement of $\P$
satisfying
\begin{align}\label{EQ: wH2}
  \rho_G({\Q}\cup ({\P}\setminus\{P_1\}))= \rho_H({\Q})+ \rho_G({\P})-(31-17).
\end{align}
\label{mod7-key-lem}
\end{lemma}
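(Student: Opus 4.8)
The plan is to mimic, essentially verbatim, the computation in the proof of Lemma~\ref{mod5-key-lem}, replacing the constants $11$ and $19$ by $17$ and $31$ everywhere. First I would note the two easy structural facts: $\Q\cup(\P\setminus\{P_1\})$ is a refinement of $\P$ (immediate from the definition, since each $Q_i\subseteq P_1$ and the remaining parts $P_2,\dots,P_t$ are untouched), and it has exactly $s+t-1$ parts, so that the term $-17t'+31$ in Definition~\ref{DEF: rho-partition} becomes $-17(s+t-1)+31$ for this refinement.

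The only substantive point is the edge-counting identity $\sum_{i=1}^s d_G(Q_i)-d_G(P_1)=\sum_{i=1}^s d_H(Q_i)$, where $H=G[P_1]$. This holds because an edge of $G$ joining two distinct parts $Q_i,Q_j$ with $i\ne j$ (necessarily an edge of $H$) contributes $2$ to each side; an edge from $P_1$ to $P_1^c$ contributes $1$ to $\sum_{i=1}^s d_G(Q_i)$ and $1$ to $d_G(P_1)$, hence cancels on the left and is absent on the right; and an edge lying inside a single $Q_i$ contributes to neither side. This is exactly the bookkeeping fact used implicitly in Lemma~\ref{mod5-key-lem}.

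Granting this, I would expand $\rho_G(\Q\cup(\P\setminus\{P_1\}))=\sum_{i=1}^s d_G(Q_i)+\sum_{j=2}^t d_G(P_j)-17(s+t-1)+31$, regroup it as $\bigl[\sum_{i=1}^s d_G(Q_i)-d_G(P_1)-17s+31\bigr]+\bigl[\sum_{j=1}^t d_G(P_j)-17(t-1)\bigr]$, substitute the identity into the first bracket to obtain $\rho_H(\Q)$, and rewrite the second bracket as $\bigl[\sum_{j=1}^t d_G(P_j)-17t+31\bigr]-(31-17)=\rho_G(\P)-(31-17)$. Summing the two pieces gives Eq.~(\ref{EQ: wH2}).

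I do not anticipate any real obstacle: the statement is a purely arithmetic identity, and the one place to be slightly careful is the cancellation of the edges leaving $P_1$, which is precisely the same step that underlies Lemma~\ref{mod5-key-lem}. Indeed the whole proof is the $11\mapsto 17$, $19\mapsto 31$ analogue of that earlier one, and I would present it in the same four-line aligned display.
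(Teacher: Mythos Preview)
Your proposal is correct and is exactly the approach the paper takes: the paper's proof simply states that it is identical to that of Lemma~\ref{mod5-key-lem} with $17$ in place of $11$ and $31$ in place of $19$. You have written out that computation in full, including the edge-counting identity that underlies it, so there is nothing to add.
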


\begin{proof}
The proof is identical to that of Lemma~\ref{mod5-key-lem}, with 17 in place
of 11 and with 31 in place of 19.
\end{proof}

We typically assume that each edge has multiplicity at most 5 (since
$6K_2$ is strongly $\Z_7$-connected, and so cannot appear in a minimal
counterexample to Theorem~\ref{THM: Main2}, as we prove in Claim~\ref{CL:
nostrongZ7}, below).  Now $\rho(aK_2)=2a-3$,
$\rho(T_{a,b,c})=2a+2b+2c-20$,  and $\rho(3K_4)=-1$; see Figure~\ref{FIG:
K234}.  In each case, the minimum in the definition of $\rho$ is achieved
uniquely by the partition with each vertex in its own part.

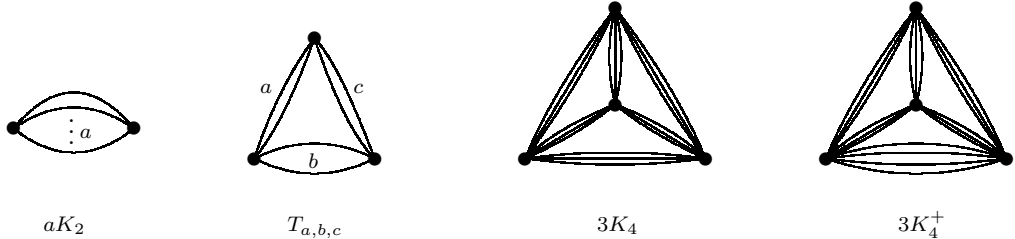
\begin{figure}[ht]

\setlength{\unitlength}{0.08cm}

\begin{center}

\begin{picture}(170,35)
\put(0,10){\circle*{2}}\put(20,10){\circle*{2}}
\qbezier(0, 10)(10, 22)(20, 10)
\qbezier(0, 10)(10, 17)(20, 10)\qbezier(0, 10)(10, 2)(20, 10)
\put(9,7.5){$\vdots$}\put(11,8.5){\footnotesize{$a$}}\put(5,-7){\footnotesize{$aK_2$}}


\put(40,5){\circle*{2}}\put(60,5){\circle*{2}}\put(50,25){\circle*{2}}
\qbezier(40, 5)(50, 10)(60, 5)
\qbezier(40, 5)(50, 0)(60, 5)
\qbezier(40, 5)(45, 11)(50, 25)\qbezier(40, 5)(44, 17)(50, 25)
\qbezier(60, 5)(55, 10)(50, 25)\qbezier(60, 5)(54, 21)(50, 25)
\put(45.5,-7){\footnotesize{$T_{a,b,c}$}}
\put(49,3.5){\footnotesize{$b$}}\put(41,16){\footnotesize{$a$}}\put(56.5,16){\footnotesize{$c$}}

\put(85,5){\circle*{2}}\put(115,5){\circle*{2}}\put(100,30){\circle*{2}}\put(100,14){\circle*{2}}
\qbezier(85, 5)(100, 7)(115, 5)\qbezier(85, 5)(100, 3)(115, 5)\qbezier(85, 5)(85, 5)(115, 5)
\qbezier(85, 5)(90, 17)(100, 30)\qbezier(85, 5)(94, 17)(100, 30)\qbezier(85, 5)(85, 5)(100, 30)
\qbezier(115, 5)(110, 17)(100, 30)\qbezier(115, 5)(106, 17)(100, 30)\qbezier(115, 5)(115, 5)(100, 30)
\qbezier(85, 5)(90, 10)(100, 14)\qbezier(85, 5)(94, 9)(100, 14)\qbezier(85, 5)(85, 5)(100, 14)
\qbezier(115, 5)(110, 10)(100, 14)\qbezier(115, 5)(106, 9)(100, 14)\qbezier(115, 5)(115, 5)(100, 14)
\qbezier(100, 30)(98, 22)(100, 14)\qbezier(100,30)(102, 22)(100, 14)\qbezier(100,30)(100, 30)(100, 14)

\put(97,-7){\footnotesize{$3K_4$}}

\put(135,5){\circle*{2}}\put(165,5){\circle*{2}}\put(150,30){\circle*{2}}\put(150,14){\circle*{2}}
\qbezier(135, 5)(150, 10)(165, 5)\qbezier(135, 5)(150, 7)(165, 5)\qbezier(135, 5)(150, 3)(165, 5)
\qbezier(135, 5)(150, 0)(165, 5)

\qbezier(135, 5)(140, 17)(150, 30)\qbezier(135, 5)(144, 17)(150, 30)\qbezier(135, 5)(135, 5)(150, 30)
\qbezier(165, 5)(160, 17)(150, 30)\qbezier(165, 5)(156, 17)(150, 30)\qbezier(165, 5)(165, 5)(150, 30)
\qbezier(135, 5)(140, 10)(150, 14)\qbezier(135, 5)(144, 9)(150, 14)\qbezier(135, 5)(135, 5)(150, 14)
\qbezier(165, 5)(160, 10)(150, 14)\qbezier(165, 5)(156, 9)(150, 14)\qbezier(165, 5)(165, 5)(150, 14)
\qbezier(150, 30)(148, 22)(150, 14)\qbezier(150,30)(152, 22)(150, 14)\qbezier(150,30)(150, 30)(150, 14)

\put(147,-7){\footnotesize{$3K_4^+$}}

\end{picture}
\end{center}
\vspace{0.4cm}
\caption{\small\it The graphs $aK_2, T_{a,b,c}, 3K_4, 3K_4^+$.}
\label{FIG: K234}
\end{figure}

Let
${\F}=\{aK_2: 2\le a\le 5\}\cup\{T_{a,b,c}: 10\le a+b+c\le 11
~\mbox{and $T_{a,b,c}$ is 6-edge-connected}.\}$  
It is straightforward\footnote{When $a\le 5$, the graph $aK_2$ has seven $\Z_7$-boundaries and
at most 6 orientations, so at least one boundary is not achievable.  The
graph $3K_4$ cannot achieve the boundary $\beta(v)=0$ for all $v$.  In such an
orientation $D$ each vertex $v$ must have $|d^+_D(v)-d^-_D(v)|=7$.  But now
some two adjacent vertices must either both have indegree 8 or both have
outdegree 8, and we cannot orient the three edges between them to achieve this.
For $T_{a,b,c}$, it suffices to consider the case $a+b+c=11$.  Let
$V(G)=\{v_1,v_2,v_3\}$.  By symmetry, we assume $d(v_1)\le d(v_2)\le d(v_3)$.
For $T_{1,5,5}$, we cannot achieve $\beta(v_1)=\beta(v_2)=1$ and $\beta(v_3)=5$,
since $v_1$ and $v_2$ must each have all incident edges oriented in.
For $T_{2,4,5}$, we cannot achieve $\beta(v_1)=1$, $\beta(v_2)=2$, and $\beta(v_3)=4$,
since $v_1$ must have all incident edges oriented in, and $v_2$ must have all
but one edges oriented in.
For $T_{3,3,5}$, we cannot achieve $\beta(v_1)=1$ and $\beta(v_2)=\beta(v_3)=3$,
since $v_1$ must have all incident edges oriented in.
For $T_{3,4,4}$, we cannot achieve $\beta(v_1)=\beta(v_2)=2$ and $\beta(v_3)=3$,
since $v_1$ and $v_2$ must each have all but one incident edge oriented in.
}
to check that no graph in $\F$ is strongly $\Z_7$-connected.
Further, if $T_{a,b,c}$ is 8-edge-connected, then $\|G\|\ge 3\delta(G)/2\ge 12$.
Thus, no graph in $\F$ is 8-edge-connected.
The following theorem is the main result of Section~\ref{Z7-sec}.   We call a
partition $\P$ \EmphE{problematic}{-4mm} if $G/\P\in \F$.

\begin{theorem}
\label{THM: Main2}
Let $G$ be a planar graph and $\beta$ be a $\Z_7$-boundary of $G$. If $\rho(G)\ge
0$, then $G$ admits a $(\Z_7,\beta)$-orientation, unless $G$ has a problematic partition.
\end{theorem}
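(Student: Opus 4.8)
\textbf{Proof proposal for Theorem~\ref{THM: Main2}.} The plan is to mirror the proof of Theorem~\ref{THM: Main1}, replacing the modulus $5$ by $7$, the weight function $w$ by $\rho$, and the list $\T$ by $\F$. First I would assume the theorem is false and fix a counterexample $G$ minimizing $|G|+\|G\|$; then Theorem~\ref{THM: Main2} holds for every smaller graph, and via Lemma~\ref{reduc-lem} this gives the analogue of Lemma~\ref{LEM: smallH}: a small planar graph $H$ with $\rho(H)\ge 0$ is strongly $\Z_7$-connected provided no partition of $H$ contracts it into $\F$, and in particular this holds outright when $\rho(H)$ exceeds the largest $\rho$-value among graphs in $\F$ (which is $\rho(5K_2)=7$), or when $H$ is sufficiently edge-connected that no $aK_2$ or low-connectivity $T_{a,b,c}$ can be a contraction. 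The first real work is to establish a small library of reducible configurations: graphs that are strongly $\Z_7$-connected (the analogue of Lemma~\ref{LEM: 4K2J2K4inSZ5}), proved by the same bounded case analysis on $\Z_7$-boundaries, using lifting reductions of the first and second type to reduce larger configurations to smaller ones already shown to be strongly $\Z_7$-connected. I expect the relevant base graphs to include $6K_2$, certain $T_{a,b,c}$ with $a+b+c=12$, and some $\Z_7$-analogues of $2K_4$ and $3C_4$ (likely $3K_4$ with an extra edge, i.e. $3K_4^+$, and similarly thickened $4$-cycles), since those are exactly the graphs drawn in Figure~\ref{FIG: K234}.

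Next I would run the structural phase: using Lemma~\ref{mod7-key-lem} in place of Lemma~\ref{mod5-key-lem}, prove a chain of claims bounding $\rho_G(\P)$ from below for each nontrivial partition $\P$ of $G$ — an analogue of Claim~\ref{CL: nontrivialge4} (giving $\rho_G(\P)\ge$ some constant, and a larger constant for normal $\P$), then Claim~\ref{CL: 2nontrivialge6} (bigger bounds when two parts are nontrivial), then Claim~\ref{CL: ess7} (translating partition bounds into edge-connectivity bounds on $G$, showing $\delta(G)$ and the edge-connectivity are reasonably large). The arithmetic here is entirely mechanical: each claim is proved by contradiction, refining a nontrivial part $P_1$ into $H=G[P_1]$, using Eq.~\eqref{EQ: wH2} to convert a hypothetical small $\rho_G(\P)$ into a large $\rho(H)$, and then invoking the small-graph lemma to conclude $H$ is strongly $\Z_7$-connected, contradicting minimality. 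With these bounds in hand I would forbid a handful of explicit subgraphs of $G$ — the $\Z_7$-analogues of $T_{1,1,3}$, $3C_4\dit$, $T_{2,3,3}\diit$ — each proved reducible by lifting a short path through a low-degree vertex to recreate one of the base configurations, then contracting it; the partition bounds guarantee the contracted graph $G'$ still has $\rho(G')$ large enough (or $G'$ edge-connected enough) to be strongly $\Z_7$-connected.

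The final phase is discharging. Since $\rho(G)\ge 0$ gives $2\|G\|-17|G|+31\ge 0$, combining with Euler's formula yields an inequality of the form $\sum_{f}\ell(f)=2\|G\|\le c|F(G)|-\varepsilon$ for an explicit constant $c$ (roughly $34/15$, analogous to $22/9$); assign each face charge $\ell(f)$, design rules (R1)--(R?) sending charge from large faces to $2$-faces and to ``light'' short faces through strings of $2$-faces, and show every face finishes with charge at least $c$, contradicting the global bound. The forbidden subgraphs from the structural phase are exactly what rules out the bad local configurations — a $2$-face can't be flanked by too many parallel edges (no thick $K_2$), a $3$-face or $4$-face can't be surrounded by too many $2$-faces (no subdivided thick triangles or $4$-cycles), etc.

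The main obstacle I anticipate is twofold. First, for modulus $7$ the edge multiplicities run up to $5$ and the relevant $T_{a,b,c}$ have $a+b+c$ up to $12$, so both the list of base reducible configurations and the case analysis inside Lemma~\ref{LEM: 4K2J2K4inSZ5}'s analogue are substantially larger and more delicate — one must be careful that each lifting reduction preserves planarity (always lifting a pair $vw,wx$ with $vx$ already present) and lands in a configuration already handled. Second, and more seriously, the discharging scheme must be recalibrated from scratch: the target charge, the amounts transferred in each rule, and the number of rules all change, and verifying that $(2,2,2)$-faces (and possibly new light face types peculiar to the $7$-case) end with enough charge will require precisely the right set of forbidden subgraphs — so the structural phase and the discharging phase have to be tuned together, and getting that matching right is where the real effort lies. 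The authors themselves flag that ``the latter is harder,'' which I read as a warning that the bookkeeping, not the ideas, is the bottleneck.
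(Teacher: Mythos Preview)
Your proposal is correct and follows essentially the same approach as the paper: minimal counterexample, a chain of $\rho_G(\P)$ lower bounds proved via Lemma~\ref{mod7-key-lem}, a library of forbidden subgraphs (the paper uses $T_{1,1,5}$, $T_{1,1,5}\dit$, $T^{\,\bullet}_{1,1,5}$, $T_{2,2,4}$, $(5C_4^=)\diit$, and $T_{4,4,4}\diiit$, with base configurations $6K_2$, $3K_4^+$, the $6$-edge-connected $T_{a,b,c}$ with $a+b+c=12$, and $5C_4^=$), and discharging with target $34/15$. You are also right that the bookkeeping is the bottleneck; in particular the paper needs two extra intermediate steps absent from the $\Z_5$ proof---a separate claim that $|G|\ge 5$ (requiring the $4$-vertex Lemma~\ref{K4weightsZ7-lem}) and a claim that $\delta(G)\ge 10$ via a second-type lifting reduction---and these feed into a longer list of reducible configurations than the three you expect by direct analogy.
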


As easy corollaries of Theorem~\ref{THM: Main2} we get the following two
results.

\begin{theorem}
Every $17$-edge-connected planar graph is strongly $\Z_7$-connected.
\label{17-edge-thm}
\end{theorem}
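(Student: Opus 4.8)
The plan is to derive Theorem~\ref{17-edge-thm} from Theorem~\ref{THM: Main2} in exactly the way Theorem~\ref{11-edge-thm} is derived from Theorem~\ref{THM: Main1}. So let $G$ be a $17$-edge-connected planar graph, fix an arbitrary $\Z_7$-boundary $\beta$, and fix an arbitrary partition $\P=\{P_1,\dots,P_t\}$ of $V(G)$. Since $G$ is $17$-edge-connected, every part $P_i$ (being a proper nonempty subset when $t\ge 2$, or the whole vertex set when $t=1$, in which case $d(P_1)=0$) satisfies $d(P_i)\ge 17$ when $t\ge 2$; hence for $t\ge 2$ we get $\rho_G(\P)=\sum_{i=1}^t d(P_i)-17t+31\ge 17t-17t+31=31$, and for $t=1$ we get $\rho_G(\P)=0-17+31=14$. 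Either way $\rho_G(\P)\ge 14$, so $\rho(G)\ge 14\ge 0$, and the hypothesis $\rho(G)\ge 0$ of Theorem~\ref{THM: Main2} is met.

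Next I would rule out problematic partitions. A partition $\P$ is problematic exactly when $G/\P\in\F=\{aK_2:2\le a\le 5\}\cup\{T_{a,b,c}:10\le a+b+c\le 11,\ T_{a,b,c}\ \text{is }6\text{-edge-connected}\}$. For each $G\in\F$ one checks directly that $\rho(G)\le 2$: indeed $\rho(aK_2)=2a-3\le 7$ for $a\le 5$, and $\rho(T_{a,b,c})=2a+2b+2c-20\le 2$ when $a+b+c\le 11$; more to the point, every such graph has a vertex of degree at most, say, $11<17$ (for $aK_2$ a vertex has degree $a\le 5$; for $T_{a,b,c}$ with $a+b+c\le 11$ the smallest vertex has degree $a+b\le 8$ wait —actually the smallest vertex degree in $T_{a,b,c}$ is the sum of the two smallest labels, which is at most $\lfloor 2(11)/3\rfloor=7$). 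In any case, if $G/\P$ were in $\F$, then the vertex $v_i$ of $G/\P$ of smallest degree would correspond to a part $P_i$ with $d_G(P_i)=d_{G/\P}(v_i)\le 8<17$, contradicting $17$-edge-connectivity (here I use $|\F|$ graphs all have $|V|\le 3$, so $t\le 3$ and some part has degree below $17$; even the whole-graph partition $\{V(G)\}$ gives the $1$-vertex graph, which is not in $\F$). Hence $G$ has no problematic partition.

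Now Theorem~\ref{THM: Main2} applies: $G$ admits a $(\Z_7,\beta)$-orientation. Since $\beta$ was an arbitrary $\Z_7$-boundary, $G$ is strongly $\Z_7$-connected, completing the proof. I expect the only point requiring any care is the bookkeeping that shows no problematic partition can occur, i.e. that every member of $\F$ contains a vertex whose degree is strictly less than $17$ (equivalently, that $\rho$ is bounded above on $\F$ by something smaller than the $\rho$-value forced by $17$-edge-connectivity); this is a finite check over the short list defining $\F$, and is the analogue of the sentence "each troublesome partition $\P$ has $w(G/\P)\le 3$" in the proof of Theorem~\ref{11-edge-thm}. The main obstacle is essentially nil — this is a routine corollary — but the one substantive input is simply invoking Theorem~\ref{THM: Main2} correctly and checking the connectivity arithmetic against the definition of $\rho$ and of $\F$.
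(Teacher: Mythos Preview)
Your proof is correct and follows exactly the template the paper indicates: it is the proof of Theorem~\ref{11-edge-thm} with $17$ in place of $11$ and $31$ in place of $19$, applying Theorem~\ref{THM: Main2} after checking $\rho(G)\ge 0$ and ruling out problematic partitions via the edge-connectivity bound. The exposition in the middle (the ``wait---actually'' aside about vertex degrees in $T_{a,b,c}$) should be cleaned up, but the argument is sound; the cleanest phrasing is simply that every graph in $\F$ has a vertex of degree at most $7$, so a problematic partition would yield a part $P_i$ with $d_G(P_i)\le 7<17$, contradicting $17$-edge-connectivity.
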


\begin{theorem}
Every odd-$17$-edge-connected planar graph admits a modulo $7$-orientation.  In
particular, every 16-edge-connected planar graph admits a modulo
$7$-orientation (and thus a circular 7/3-flow).
\label{16-edge-thm}
\end{theorem}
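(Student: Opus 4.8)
The plan is to deduce Theorem~\ref{16-edge-thm} from Theorem~\ref{17-edge-thm} exactly as Theorem~\ref{10-edge-thm} was deduced from Theorem~\ref{11-edge-thm}, changing only the numerical parameters (replacing $11$ by $17$ and $19$ by $31$ throughout, and $\Z_5$ by $\Z_7$). The second statement follows from the first by Lemma~\ref{prop1}, since a $16$-edge-connected planar graph is in particular odd-$17$-edge-connected (any odd edge cut has size at least $17$, as all edge cuts have size at least $16$ and an odd cut cannot have size $16$). So the work is entirely in proving the first statement.

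For the first statement I would argue by contradiction: let $G$ be a counterexample minimizing $\|G\|$. First apply Zhang's Splitting Lemma for odd edge-connectivity~\cite{CQsplitting02}: if $G$ has a vertex $v$ with $d(v)\notin\{2,17\}$, we may lift a pair of edges incident to $v$ that are consecutive in the cyclic order around $v$, keeping the graph planar and odd-$17$-edge-connected, contradicting minimality (lifting cannot destroy a modulo $7$-orientation when we reinsert the two half-edges through $v$, by the same argument as for $\Z_5$; and for $d(v)=16$ all edges at $v$ get paired, so the resulting orientation has boundary $0$ at $v$ and hence lifts back to a modulo $7$-orientation of $G$). Thus $\delta(G)\ge 17$. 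If $G$ is $17$-edge-connected we are done by Theorem~\ref{17-edge-thm}, so assume not, and pick a smallest $W\subset V(G)$ with $d(W)<17$; then $|W|\ge 2$ and every proper nonempty $W'\subsetneq W$ has $d(W')\ge 17$ by minimality of $W$.

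Set $H=G[W]$. For any partition ${\P}=\{P_1,\dots,P_t\}$ of $V(H)$ with $t\ge 2$, each $P_i\subsetneq W$ so $d_G(P_i)\ge 17$, and since $d_H(P_i)=d_G(P_i)-|[P_i,W^c]|$ with $\sum_i |[P_i,W^c]|=d_G(W)<17$, we get
\begin{align*}
\rho_H({\P})&=\sum_{i=1}^t d_H(P_i)-17t+31\\
&=\sum_{i=1}^t d_G(P_i)-d_G(W^c)-17t+31\\
&> 17t-17-17t+31=14.
\end{align*}
Hence $\rho(H)\ge 15$ (the trivial partition of $W$ also satisfies this bound since $\delta(G)\ge 17$), and in particular $H$ has no problematic partition, since each problematic partition ${\P}$ has $\rho(G/{\P})\le 2a-3\le 7$ or $\le 2(a{+}b{+}c){-}20\le 2$, well below $15$. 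So Theorem~\ref{THM: Main2} applies to $H$ and shows $H$ is strongly $\Z_7$-connected. By the minimality of $\|G\|$, the smaller graph $G/H$ admits a modulo $7$-orientation; by Lemma~\ref{reduc-lem}(i) this extends to a modulo $7$-orientation of $G$, contradicting the choice of $G$. The main obstacle, as in Section~\ref{Z5-sec}, is not this deduction but the proof of Theorem~\ref{THM: Main2} itself (and hence Theorem~\ref{17-edge-thm}); once those are in hand, the present argument is a routine transcription of the $k=4$ case with the updated constants $17$ and $31$, and the only point requiring a moment's care is verifying that every problematic partition has small $\rho(G/{\P})$ so that the edge-connectivity hypothesis genuinely rules it out.
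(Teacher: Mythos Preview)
Your proposal is correct and follows exactly the route the paper takes: the authors state explicitly that the proofs of Theorems~\ref{17-edge-thm} and~\ref{16-edge-thm} are identical to those of Theorems~\ref{11-edge-thm} and~\ref{10-edge-thm} with $17$ in place of $11$ and $31$ in place of $19$, and you have carried out precisely that transcription. One cosmetic point: the single-part partition $\{V(H)\}$ gives $\rho_H=0-17+31=14$, so strictly $\rho(H)\ge 14$ rather than $15$; but this is the same harmless imprecision present in the paper's proof of Theorem~\ref{10-edge-thm}, and your explicit check that every problematic partition has $\rho\le 7$ makes the conclusion go through regardless.
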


The proofs of Theorems~\ref{17-edge-thm} and~\ref{16-edge-thm} are identical to
those of Theorems~\ref{11-edge-thm} and~\ref{10-edge-thm}, but with 17 in place
of 11 and with 31 in place of 19.  Note that Theorem~\ref{16-edge-thm} includes
Theorem~\ref{7/3-flow-thm} as a special case.

For the proof of Theorem~\ref{THM: Main2}, we need the following two lemmas.
Their proofs are more tedious than enlightening, so we postpone them to the appendix.
When a graph $H$ is edge-transitive, we write $H^+$ or $H^-$\aside{$H^+/H^-$}
to denote the graph formed by adding or removing a single copy of one edge.

\begin{lemma}
\label{Z7-contract-configs}
Each of the following graphs is strongly $\Z_7$-connected: $6K_2$,
$3K_4^{+}$, and every 6-edge-connected graph $T_{a,b,c}$ where $a+b+c=12$.
\end{lemma}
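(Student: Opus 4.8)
\textbf{Proof plan for Lemma~\ref{Z7-contract-configs}.}
The plan is to verify strong $\Z_7$-connectedness for each of the three families by the same case-reduction strategy already used for Lemma~\ref{LEM: 4K2J2K4inSZ5}: fix an arbitrary $\Z_7$-boundary $\beta$, and either directly produce a $\beta$-orientation by counting how many edges to orient out of a low-degree vertex, or reduce to a strictly smaller already-known strongly $\Z_7$-connected graph via a lifting reduction of the second type (orient some edges at a vertex $v$ to hit $\beta(v)$, delete $v$, lift the remaining edges at $v$ in pairs). The ordering matters: first handle $6K_2$, then use it (together with the smaller $aK_2$ from $\F$ being \emph{almost} usable) to handle $T_{a,b,c}$ with $a+b+c=12$, and finally use those triangles to handle $3K_4^+$.

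First I would dispatch $6K_2$. With $V=\{v_1,v_2\}$ and $6$ parallel edges, orienting $k$ edges out of $v_1$ gives boundary $2k-6 \pmod 7$ at $v_1$; as $k$ ranges over $\{0,\dots,6\}$ the value $2k-6$ hits all of $\Z_7$, and the complementary orientation automatically achieves $\beta(v_2)=-\beta(v_1)$. Next, for $T_{a,b,c}$ with $a+b+c=12$ and $6$-edge-connected (so each $d(v_i)\ge 6$, forcing $(a,b,c)$ to be one of $(2,5,5),(3,4,5),(4,4,4)$ up to reordering, and also $(1,5,6),(2,4,6),(3,3,6)$—but $6$-edge-connectivity with the convention $\mu\le 5$ rules out a multiplicity-$6$ pair, so really $(2,5,5),(3,4,5),(4,4,4)$): let $v_1$ be a minimum-degree vertex, with $d(v_1)=a+\min\{\text{other two}\}\ge 7$ in each case. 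I orient enough edges at $v_1$ to achieve $\beta(v_1)$ using between $0$ and $d(v_1)$ oriented edges, choosing the parity of the number of leftover edges at $v_1$ to be even, and—when at least $6$ edges remain unused at $v_1$—lift those leftover edges in pairs, choosing the pairs so as to create enough new $v_2v_3$ edges to bring the remaining triangle up to one of $6K_2$ (if $v_1$ is fully consumed), or to a $T_{a',b',c'}$ that is strongly $\Z_7$-connected by induction on $a+b+c$ within the lemma's scope, or down to a smaller $T$ on $\{v_2,v_3\}$ plus the $v_2v_3$-multiedge; the crucial arithmetic point is that lifting conserves $d(v_2)+d(v_3)$ while increasing $\mu(v_2v_3)$, so we always land on $6K_2$ or an already-settled configuration. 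The residual small cases (where $\beta(v_1)$ forces consuming almost all edges at $v_1$, so fewer than $6$ remain to lift) are a short explicit check, symmetric to the $2K_4$/$3C_4$ endgame in Lemma~\ref{LEM: 4K2J2K4inSZ5}: after reducing $\beta$ to lie in a small residue class by symmetry, split $V$ into two blocks and orient all crossing edges one way, balancing parallel pairs inside each block.

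Finally, for $3K_4^+$ (that is, $3K_4$ with one extra parallel edge, so five vertex-pairs have multiplicity $3$ and one pair has multiplicity $4$): pick a vertex $v_1$ not incident to the heavy edge, so $d(v_1)=9$. For most residues of $\beta(v_1)$, orient two or three nonparallel edges at $v_1$ to hit $\beta(v_1)$, leaving $\ge 6$ edges at $v_1$, and lift them in three pairs so that $\{v_2,v_3,v_4\}$ induces a $T_{a,b,c}$ with $a+b+c=12$ and minimum degree $\ge 6$; then apply the triangle case just proved and Lemma~\ref{reduc-lem}. For the few remaining residue classes of $\beta(v_1)$ (forced into $\{1,6\}$-type values, as in the $2K_4$ argument), use the global constraint $\sum_i\beta(v_i)\equiv 0$ to pin down the $\beta(v_i)$ up to symmetry, partition $V$ into two pairs, orient all crossing edges consistently, and balance each within-pair multiedge, nudging one crossing edge if the parities don't match. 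The main obstacle I anticipate is bookkeeping in the $T_{a,b,c}$ step: ensuring that after orienting at $v_1$ to hit $\beta(v_1)$, the number and placement of unused edges at $v_1$ always permits a lift-pairing that produces a graph in our solved list—this requires checking each of the three triangle types against each residue of $\beta(v_1)$, and it is exactly this finite but fiddly verification that we defer to the appendix.
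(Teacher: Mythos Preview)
Your plan is essentially the paper's: handle $6K_2$ by direct count, reduce $T_{a,b,c}$ to $6K_2$ by orienting edges at one vertex and lifting the rest, then reduce $3K_4^+$ to the triangle case the same way, with a small residual boundary handled explicitly. Two details, however, need fixing.

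First, your dismissal of $(1,5,6),(2,4,6),(3,3,6)$ by ``the convention $\mu\le 5$'' is not valid: that convention applies only inside the minimal counterexample to Theorem~\ref{THM: Main2}, not to this standalone lemma, and each of those triangles \emph{is} $6$-edge-connected with $a+b+c=12$. The paper covers them in one line: if some vertex has degree exactly $6$, then the opposite pair has multiplicity $6$, so the graph contains $6K_2$ and is strongly $\Z_7$-connected by Lemma~\ref{reduc-lem}(ii). You should add this step rather than exclude those cases.

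Second, your arithmetic in the $3K_4^+$ reduction is off. At a $9$-vertex $v_1$, orienting three edges and lifting three pairs leaves $19-9+3=13$ edges on $\{v_2,v_3,v_4\}$, not $12$; and orienting only two leaves seven edges at $v_1$, which cannot all be lifted in pairs. The paper instead orients \emph{five} edges at a $9$-vertex (this hits every nonzero residue, since $\{2j-5:0\le j\le 5\}=\Z_7\setminus\{0\}$) and lifts two pairs, landing exactly on a $12$-edge triangle; at a $10$-vertex it orients four edges (hitting every residue outside $\{1,6\}$) and lifts three pairs. The same counts govern the $T_{a,b,c}$ step: at a $7$-vertex orient five and lift one pair, at an $8$-vertex orient four and lift two pairs, each time landing on $6K_2$. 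With those corrections your plan matches the appendix proof.
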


Let \Emph{$5C_4^=$} denote the graph formed from $5C_4$ by deleting a perfect matching.
\begin{lem}
The graph $5C_4^=$ is strongly $\Z_7$-connected.  Further, if $G$ is a graph
with $|G|=4$, $\|G\|=19$, $\mu(G)\le 5$, and $\delta(G)\ge 8$, then $G$ is
strongly $\Z_7$-connected.
\label{K4weightsZ7-lem}
\end{lem}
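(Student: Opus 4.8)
The plan is to prove Lemma~\ref{K4weightsZ7-lem} by the same ``fix a $\Z_7$-boundary $\beta$ and construct an orientation achieving it'' strategy used for Lemma~\ref{LEM: 4K2J2K4inSZ5}, leaning heavily on Lemma~\ref{Z7-contract-configs} as a supply of contractible configurations. For $5C_4^=$, the graph has four vertices, each of degree $6$ (each vertex lies in two of the five ``$C_4$''s, wait---more carefully, $5C_4$ has each vertex of degree $10$, and deleting a perfect matching drops two of the four vertices' degrees; to keep it vertex-transitive one deletes a $2$-factor). In any case it is $4$-regular-ish with $\|5C_4^=\|$ moderate, so I would pick a vertex $v_1$, orient a small number of nonparallel edges at $v_1$ to achieve $\beta(v_1)$ (the achievable residues covering most of $\Z_7$ since $d(v_1)$ is reasonably large), then lift the remaining edges at $v_1$ in nonparallel pairs so that the resulting three-vertex graph on $\{v_2,v_3,v_4\}$ is one of the strongly $\Z_7$-connected $T_{a,b,c}$'s with $a+b+c=12$ from Lemma~\ref{Z7-contract-configs}; then Lemma~\ref{reduc-lem} finishes. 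The only residues $\beta(v_1)$ not immediately reachable this way are the ``extreme'' ones forcing nearly all edges at $v_1$ to point one direction, and those symmetric cases I would handle by an explicit global orientation exploiting the $\Z_7$-boundary constraint $\sum_i\beta(v_i)\equiv 0$.

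For the second statement, let $G$ have $|G|=4$, $\|G\|=19$, $\mu(G)\le 5$, $\delta(G)\ge 8$. First I would record the degree sequence: $\sum d(v_i)=38$ with each $d(v_i)\ge 8$, so the degrees are among $\{8,8,8,14\},\{8,8,9,13\},\dots$ up to symmetry---a short finite list. I would then argue, for each such $G$, by the same vertex-elimination scheme: choose the vertex $v_1$ of minimum degree, achieve $\beta(v_1)$ by orienting at most (say) three nonparallel edges at $v_1$ and lifting the rest in nonparallel pairs; since $\mu(G)\le 5$ and $d(v_1)\ge 8$, there are enough nonparallel edges at $v_1$ to carry out the lifting, and the lifted three-vertex graph $G'$ on $\{v_2,v_3,v_4\}$ has $\|G'\|=19-3=16$ (or $19-2$, etc.) edges. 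I need $G'$ to be a $6$-edge-connected $T_{a,b,c}$ with $a+b+c\ge 12$, or to contain such as a contractible subgraph, or to itself be handled; the edge count $16\ge 12$ is comfortably large, and $6$-edge-connectivity of $G'$ follows from $\delta(G)\ge 8$ plus the fact that lifting at $v_1$ decreases each cut in the remaining graph by at most the number of lifted pairs. So the bulk of the work is: (a) verifying that enough boundary residues at $v_1$ are achievable with few oriented edges, and (b) checking the leftover symmetric/extreme cases by hand.

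The main obstacle I anticipate is the bookkeeping in case (a)--(b): after removing the achievable residues, one is left with $\beta(v_1)$ forcing $v_1$ to be a source or a near-source, and then one must also control $\beta(v_2),\beta(v_3)$ simultaneously, so these residual cases genuinely require an explicit orientation of all $19$ edges rather than a reduction. Because $\mu(G)\le 5$ there is real freedom---within each parallel class of size $m$ one can realize any net flow in $\{-m,-m+2,\dots,m\}$---and the $\Z_7$-boundary condition removes one degree of freedom, so a direct counting/feasibility argument (an Euler-orientation / $T$-join style argument on the four-vertex multigraph, or simply: orient all edges toward the ``$+$'' vertices and correct parities within parallel classes) should close these cases. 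I would present the argument uniformly: reduce to three vertices via one lifting reduction of the second type whenever possible, invoke Lemma~\ref{Z7-contract-configs}, and dispatch the bounded number of remaining $(\beta,G)$ pairs by exhibiting orientations explicitly, exactly mirroring the style of the proof of Lemma~\ref{LEM: 4K2J2K4inSZ5}. Since the statement is flagged as ``more tedious than enlightening'' and deferred to the appendix, the expected shape is a long but entirely routine case analysis with no conceptual surprises.
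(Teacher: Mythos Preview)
Your high-level strategy is the paper's: eliminate one vertex by a lifting reduction of the second type, land on a $3$-vertex $T_{a,b,c}$, invoke Lemma~\ref{Z7-contract-configs}, and finish residual boundaries by explicit orientations. But three concrete details are off in ways that matter.

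First, the structure of $5C_4^=$: deleting a perfect matching removes one edge at each vertex, so $5C_4^=$ is $9$-regular with $\|5C_4^=\|=18$, not degree~$6$.

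Second, your edge count $\|G'\|=19-3=16$ is wrong. If you orient $k$ edges at $v_1$ and lift the remaining $(d(v_1)-k)/2$ pairs, the $3$-vertex graph has $\|G'\|=(19-d(v_1))+(d(v_1)-k)/2$; for $d(v_1)=8$, $k=2$ this is $14$, not $16$.

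Third, and most importantly, Lemma~\ref{Z7-contract-configs} needs $a+b+c=12$ \emph{exactly}. Your scheme (orient few, lift many) overshoots to $\|G'\|\ge 13$, so you would need an extra spanning-subgraph argument to drop to a $12$-edge $6$-edge-connected triangle---doable, but you do not mention it, and it is not always automatic that such a subgraph stays $6$-edge-connected. The paper reverses the trade-off: orient as many edges as possible and lift as few pairs as possible, targeting $\|G_i\|=12$ on the nose. With $d(v_i)=8$ one orients six edges (which realizes \emph{every} residue in $\Z_7$) and lifts one pair, so $\|G_i\|=19-8+1=12$ and there are no residual cases at all; this immediately reduces to $\delta(G)\ge 9$. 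With $d(v_i)=9$ and $\beta(v_i)\neq 0$, five oriented edges plus two lifted pairs again give $\|G_i\|=12$; only $\beta(v_i)=0$ survives, and similarly $d(v_i)=10$ leaves only $\beta(v_i)\in\{1,6\}$. This compresses the explicit orientations to a handful of highly symmetric boundaries.

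Finally, the paper organizes the second statement by edge multiplicity rather than by degree sequence: after $\delta\ge 9$ it successively disposes of the cases $\mu(v_iv_j)=0$ (reduces to $5C_4^=$ or creates a contractible $6K_2$), $\mu(v_iv_j)=5$ (creates a contractible $6K_2$), and $\mu(v_iv_j)=1$, and only then settles into the two remaining degree sequences $\{9,9,9,11\}$ and $\{9,9,10,10\}$ with $2\le\mu\le 4$. Your degree-sequence enumeration starting from $\{8,8,8,14\}$ would work in principle but produces substantially more graphs to handle.
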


\subsection{Properties of a Minimal Counterexample in Theorem \ref{THM: Main2}}

{\bf Let $G$ be a counterexample to Theorem \ref{THM: Main2}  that minimizes
$|G|+\|G\|$.} Thus Theorem~\ref{THM: Main2} holds for all graphs smaller
than $G$. This implies the following lemma, which we will use frequently.
\begin{lemma}
\label{LEM: smallH2}
If $H$ is a planar graph with $\rho(H)\ge 0$ and $|H|+\|H\|<|G|+\|G\|$, then each of the following holds.
\begin{itemize}
\item[(a)] If $\rho_H({\P})\ge 8$ for every nontrivial partition ${\P}$,
then $H$ is strongly $\Z_7$-connected unless $H\in {\F}$.
\item[(b)] If $\rho(H)\ge 8$, then $H$ is strongly $\Z_7$-connected.
\item[(c)] Assume that $H$ is $6$-edge-connected.
   \begin{itemize}
     \item[(c-i)] If $\rho_H({\P})\ge 3$ for every nontrivial partition
${\P}$, then $H$ is strongly $\Z_7$-connected unless $H\cong T_{a,b,c}$
with $a+b+c\in\{10,11\}$.
     \item[(c-ii)] If $\rho(H)\ge 3$, then $H$ is strongly $\Z_7$-connected.
     \item[(c-iii)] If $H$ is $8$-edge-connected, then  $H$ is strongly $\Z_7$-connected.
   \end{itemize}
\end{itemize}
\end{lemma}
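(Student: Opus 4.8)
The plan is to prove Lemma~\ref{LEM: smallH2} in direct parallel with the proof of Lemma~\ref{LEM: smallH}, using Theorem~\ref{THM: Main2} as the engine and exploiting the fact that every graph $H$ in the statement is strictly smaller than the minimal counterexample $G$, hence Theorem~\ref{THM: Main2} applies to it. The common template for all parts: fix a $\Z_7$-boundary $\beta$ for $H$; since $\rho(H)\ge 0$, Theorem~\ref{THM: Main2} gives either a $(\Z_7,\beta)$-orientation or a problematic partition $\P$ with $H/\P\in\F$; so it suffices in each case to rule out the existence of such a $\P$. The key quantitative input is that every graph in $\F$ has small $\rho$-value — concretely, for $aK_2$ with $2\le a\le 5$ we have $\rho=2a-3\le 7$, and for the admissible $T_{a,b,c}$ with $a+b+c\in\{10,11\}$ we have $\rho=2(a+b+c)-20\le 2$ — and the minimum in $\rho(aK_2)$ and $\rho(T_{a,b,c})$ is attained only at the trivial partition. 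Thus if $\P$ is problematic then $\rho_H(\P)=\rho_{H/\P\text{'s preimage}}\le 7$ when read as a partition weight, but more precisely $\rho_H(\P)\le 7$ only when $\P$ is the trivial partition or, if $\P$ is nontrivial, the contracted weight relation forces a contradiction.

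Part (a): if $\rho_H(\P)\ge 8$ for every nontrivial partition, then no nontrivial $\P$ can be problematic (a problematic partition would have $H/\P\in\F$, and reading the defining partition of $\F$ one checks every such partition has weight $\le 7$; for a nontrivial $\P$ this is the weight of a coarsening and the discrepancy contradicts $\rho_H(\P)\ge 8$ — here I use that $\rho$ only decreases under coarsening by the amount $31-17$ per merge, mirroring Eq.~(\ref{EQ: wH2})), so the only possibly problematic partition is the trivial one, giving $H\in\F$. Part (b): $\rho(H)\ge 8$ implies $\rho_H(\P)\ge 8$ for every partition including the trivial one, so even the trivial partition cannot be problematic, hence $H\notin\F$ is automatic and (a) yields strong $\Z_7$-connectedness. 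Part (c-i): when $H$ is $6$-edge-connected, a problematic partition with $H/\P\cong aK_2$ would force a $2$-element part-pair with an edge cut of size $2a\le 10$; but $6$-edge-connectedness only rules out $a=2$ ($2K_2$ needs a $4$-cut) through... actually $6$-edge-connectedness gives $d(P_i)\ge 6$, so $aK_2$ with $a\le 2$ is excluded since that needs a cut of size $\le 4$; wait $4K_2$ has cuts of size $8\ge 6$, so the $aK_2$ exclusions coming purely from edge-connectivity are only $2K_2,3K_2$ — the remaining $aK_2$ ($a=4,5$) and all the admissible $T_{a,b,c}$ must be excluded by the weight hypothesis $\rho_H(\P)\ge 3$: each has $\rho\le 7$ (for $aK_2$, $a\in\{4,5\}$) or $\rho\le 2$ (for $T_{a,b,c}$, $a+b+c\in\{10,11\}$), and since a nontrivial problematic $\P$ has $\rho_H(\P)$ at least as large as these coarsened weights plus a positive correction, the hypothesis $\ge 3$ kills exactly the $T_{a,b,c}$ cases, leaving only the trivial-partition possibility $H\cong T_{a,b,c}$ with $a+b+c\in\{10,11\}$. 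Part (c-ii) follows from (c-i) exactly as (b) follows from (a): $\rho(H)\ge 3$ makes the trivial partition non-problematic too. Part (c-iii): if $H$ is $8$-edge-connected then every part $P_i$ of any partition has $d(P_i)\ge 8$, so for $\{X,X^c\}$ we get $\rho_H(\{X,X^c\})\ge 16-34+31=13\ge 3$, and more generally one checks $\rho(H)\ge 3$ (this is the analogue of deriving (c) from (b) via edge cuts in Lemma~\ref{LEM: smallH}); alternatively, since no graph in $\F$ is $8$-edge-connected (remarked in the text, via $\|G\|\ge 3\delta(G)/2\ge 12$), no problematic partition can exist at all, because $H/\P$ would inherit a contradiction with $8$-edge-connectedness only if $\P$ is trivial, but then $H\in\F$ contradicts $8$-edge-connectedness of $H$ — and for nontrivial $\P$ one uses (c-ii). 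I would present (c-iii) via the cleaner route: show $\rho(H)\ge 3$ from $8$-edge-connectedness and invoke (c-ii).

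The main obstacle I anticipate is bookkeeping the correspondence between a problematic partition $\P$ of $H$, the weight $\rho_H(\P)$, and the weight $\rho(H/\P)$ of the associated graph in $\F$: one must be careful that "$\rho_H(\P)\le 7$" is really the relevant inequality and that the subadditivity/coarsening behavior of $\rho$ (Lemma~\ref{mod7-key-lem}, with the constant $31-17=14$) correctly forbids nontrivial problematic partitions under each hypothesis. The subtlety is that when $\P$ is nontrivial, $H/\P$ collapses several vertices of $H$ into one, so $\rho_H(\P)$ is not literally $\rho(H/\P)$ — rather, refining $\P$ inside one part changes the weight by the same graph in $\F$'s weight plus a correction, and I need the monotone direction to land the contradiction. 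A clean way to sidestep this is to observe that each graph in $\F$ attains its $\rho$-minimum only at its trivial partition, so if $\P$ is a nontrivial partition of $H$ with $H/\P\in\F$, then refining $\P$ slightly (splitting one multi-vertex part of the preimage) strictly increases... no — it's the other direction: the partition of $H$ whose quotient is the all-singletons partition of $H/\P$ is a refinement $\P^\sharp\preceq\P$ with $\rho_H(\P^\sharp)=\rho(H/\P)+(\text{correction})$, and I can choose the hypothesis thresholds ($8$ in (a), $3$ in (c-i)) precisely so that $\rho_H(\P^\sharp)\ge(\text{threshold})$ contradicts $\rho(H/\P)\le 7$ (resp.\ $\le 2$). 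Once this one calculation is pinned down, the rest is a routine transcription of the $\Z_5$ argument with the substitutions $5\to 7$, $11\to 17$, $19\to 31$, $4K_2\to 6K_2$, and the list $\{2K_2,3K_2,T_{1,3,3},T_{2,2,3}\}$ replaced by $\F$.
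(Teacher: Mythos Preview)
Your overall strategy matches the paper's: apply Theorem~\ref{THM: Main2} to $H$ and rule out problematic partitions. But you are missing the one observation that makes the whole argument a two-line computation rather than a struggle with coarsenings and correction terms: for \emph{any} partition $\P$ of $H$, the quantity $\rho_H(\P)$ is \emph{literally equal} to the trivial-partition weight of $H/\P$. Indeed, $d_{H/\P}(v_i)=d_H(P_i)$ and the number of parts is the number of vertices of $H/\P$, so $\rho_H(\P)=\sum d_H(P_i)-17t+31=\rho_{H/\P}(\Q^*)$. There is no additive correction of the Lemma~\ref{mod7-key-lem} type here; that lemma concerns refining one part inside a partition, which is a different situation. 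Once you see this, (a) is immediate: if $H/\P\in\F$ then $\rho_H(\P)\le\max\{2(5)-3,\,2(11)-20\}=7$, so no nontrivial $\P$ is problematic, and (b) follows since then even the trivial partition has weight $\ge 8>7$.

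Your treatment of (c-i) and (c-iii) has concrete errors. In (c-i), if $H/\P\cong aK_2$ then $\P=\{P_1,P_2\}$ with $|[P_1,P_2]|=a$, so the edge cut has size $a$, not $2a$; hence $6$-edge-connectedness rules out \emph{all} $aK_2$ with $a\le 5$ at once. Your attempted recovery via the weight bound cannot work, since $\rho_H(\P)\in\{5,7\}$ for $a\in\{4,5\}$ and the hypothesis $\rho_H(\P)\ge 3$ gives no contradiction there. For (c-iii), your ``cleaner route'' of deducing $\rho(H)\ge 3$ from $8$-edge-connectedness fails: for a partition with $t$ parts one only gets $\rho_H(\P)\ge 8t-17t+31=31-9t$, which is negative for $t\ge 4$. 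The correct argument (which the paper uses) is that contraction never decreases edge-connectivity, so $H/\P$ is $8$-edge-connected for \emph{every} $\P$, and since no member of $\F$ is $8$-edge-connected, no problematic partition exists. Your sketch of this alternative also stumbles when it defers the nontrivial case to (c-ii), which again requires $\rho(H)\ge 3$.
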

\begin{proof}
We apply Theorem \ref{THM: Main2} to $H$.
(a) For each $J\in \F$, the trivial partition $\Q^*$ satisfies
$\rho_J(\Q^*)\le \max\{2(5)-2(17)+31,2(11)-3(17)+31\}=7$.  Since $\rho_H(\P)\ge 8$
for every nontrivial partition $\P$, we know that $H/\P\notin \F$.
Part (b) follows immediately from (a).  Consider (c).  Since $H$ is
6-edge-connected, there does not exist $\P$ such that $|H/\P|=2$ and
$\|H/\P\|\le 5$.  For (c-i), suppose there is a nontrivial partition $\P$ such
that $H/\P\cong T_{a,b,c}$ with $a+b+c\in\{10,11\}$.  Now
$\rho_H(\P)=2(11)-3(17)+31=2$, which contradicts the hypothesis.  Note that
(c-ii) follows directly from (c-i).
Finally, we prove (c-iii).
Since $G$ is 8-edge-connected, so is $G/\P$, for each partition $\P$.
Recall that each element of $\F$ has edge-connectivity at most 7.
Thus, $G/\P\notin \F$.
\end{proof}

As in Section~\ref{Z5-sec}, the main idea of the proof is to show that
$\rho_G(\P)$ is relatively large for each nontrivial partition $\P$.  This gives
us the ability to apply Lemma~\ref{LEM: smallH2} to subgraphs of $G$ even after
modifying them slightly, which yields more power when proving subgraphs are
reducible.
\begin{claim}
\label{CL: nostrongZ7}
$G$ has no strongly $\Z_7$-connected subgraph $H$ with $|H|>1$. In particular,
\begin{itemize}
\item[(a)] $G$ has no copy of $6K_2$, $3K_4^+$, or
a 6-edge-connected graph $T_{a,b,c}$ with $a+b+c=12$; and
\item[(b)] $|G|\ge 4$.
\end{itemize}
\end{claim}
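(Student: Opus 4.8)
The plan is to mimic, essentially verbatim, the proof of Claim~\ref{CL: nostrongZ5} from Section~\ref{Z5-sec}, since $G$ here plays exactly the role that the minimal counterexample to Theorem~\ref{THM: Main1} played there, with Theorem~\ref{THM: Main2} in place of Theorem~\ref{THM: Main1}, with $\F$ in place of $\T$, and with $\rho$ in place of $w$. First I would prove the main statement: suppose for contradiction that $H$ is a strongly $\Z_7$-connected subgraph of $G$ with $|H|>1$, and set $G'=G/H$. Since $|G'|+\|G'\| < |G|+\|G\|$, the minimality of $G$ means Theorem~\ref{THM: Main2} applies to $G'$; I must check that $G'$ actually satisfies the hypothesis $\rho(G')\ge 0$ and has no problematic partition. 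The first of these holds because contracting a connected subgraph cannot decrease $\rho$ (any partition of $G'$ pulls back to a partition of $G$ with the same or larger weight, using that $H$ is connected); and a problematic partition of $G'$ would pull back to a problematic partition of $G$, contradicting that $G$ itself has none (as $G$ is a counterexample to Theorem~\ref{THM: Main2}, it has no problematic partition — otherwise it would not be a genuine counterexample). Hence $G'$ is strongly $\Z_7$-connected. Then Lemma~\ref{reduc-lem}(ii) (with the subgraph $H$) shows $G$ is strongly $\Z_7$-connected, which contradicts the fact that $G$ admits some $\Z_7$-boundary with no $\beta$-orientation (since $G$ is a counterexample and has no problematic partition). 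This establishes the first sentence of the claim.

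Part (a) is then immediate: $6K_2$, $3K_4^+$, and every $6$-edge-connected $T_{a,b,c}$ with $a+b+c=12$ are strongly $\Z_7$-connected by Lemma~\ref{Z7-contract-configs}, so by the first statement $G$ contains no copy of any of them. For part (b), I follow the Section~\ref{Z5-sec} template. Since $\rho(G)\ge 0$ and $G\notin\F$, we cannot have $|G|\le 2$: indeed if $|G|=2$ then $G=aK_2$ for some $a$; the trivial partition gives $\rho_G = 2a-17\cdot 2+31 = 2a-3\ge 0$ so $a\ge 2$, and $a\le 5$ forces $G\in\F$, while $a\ge 6$ gives a copy of $6K_2$ (excluded by (a)). So $|G|\ge 3$. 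Now suppose $|G|=3$, so $G\cong T_{a,b,c}$; by (a) and Claim~\ref{CL: nostrongZ7}'s main statement we may assume $\mu(G)\le 5$ (a $6K_2$-subgraph is excluded, and more directly $6K_2$ is strongly $\Z_7$-connected). The trivial partition satisfies $\rho_G(\Q^*)=2(a+b+c)-3(17)+31 = 2(a+b+c)-20\ge 0$, so $a+b+c\ge 10$. If $a+b+c\in\{10,11\}$ then either $T_{a,b,c}$ fails to be $6$-edge-connected — but then some edge cut of size $\le 5$ among the three vertices shows $\rho_G(\{X,X^c\})$ is strictly negative, contradicting $\rho(G)\ge 0$ — or $T_{a,b,c}$ is $6$-edge-connected, in which case $G\in\F$, contradicting that $G$ is a counterexample with no problematic partition (the trivial partition would be problematic). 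If $a+b+c\ge 12$, pick a subgraph $T_{a',b',c'}$ with $a'+b'+c'=12$ and $a'\le a$, $b'\le b$, $c'\le c$; if it is $6$-edge-connected this contradicts (a), and if it is not $6$-edge-connected one can instead argue directly that $G$ itself contains $6K_2$ or a $6$-edge-connected $T$-subgraph on $12$ edges, again contradicting (a); alternatively, when $a+b+c\ge 12$ and $\mu(G)\le 5$ one checks the three pairwise-edge totals force a $6$-edge-connected $T_{a',b',c'}$ with total $12$. Either way $|G|=3$ is impossible, so $|G|\ge 4$.

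The only place that needs genuine care — and which I expect to be the main obstacle — is the bookkeeping in part (b) when $|G|=3$ and $a+b+c\ge 12$: one must argue cleanly that $\mu(G)\le 5$ together with the edge-count forces a copy of one of the forbidden configurations from (a), handling the possibility that a natural $12$-edge sub-triangle fails to be $6$-edge-connected (e.g.\ because one of its three sides has only $1$ or $2$ edges). The clean way around this is to observe that if every sub-triangle on $12$ edges with $\mu\le 5$ is ruled out directly: the edge cut isolating a vertex of smallest degree in $G$ has size $\ge 6$ by $\rho(G)\ge 0$ (since $\rho_G(\{X,X^c\})=2\,d(X)-2(17)+31 = 2d(X)-3\ge 0$ forces $d(X)\ge 2$, which is too weak — so instead one uses that $G$ having an edge cut of size $\le 5$ with both sides nonempty already gives $\rho_G<0$, impossible). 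In fact, since $|G|=3$, every vertex has degree equal to the sum of two of $\{a,b,c\}$, and $\rho(G)\ge 0$ with the trivial partition plus the singleton-vs-rest partitions pins down that $G$ is at least $6$-edge-connected and has at least $12$ edges, from which — using $\mu(G)\le 5$ — one extracts a $6$-edge-connected $T_{a',b',c'}$ with $a'+b'+c'=12$, contradicting (a). I would spell this reduction out in a sentence or two rather than the present hand-waving. Everything else is a direct transcription of the $\Z_5$ argument with the indicated constant changes.
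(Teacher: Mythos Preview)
Your argument for the first sentence and for (a) is fine and matches the paper's. The gap is in part (b), specifically the $|G|=3$ case.

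You assert (twice) that an edge cut of size at most $5$ forces $\rho_G(\{X,X^c\})<0$. This is false: by your own computation, $\rho_G(\{X,X^c\}) = 2d(X)-2(17)+31 = 2d(X)-3$, which is $\ge 1$ whenever $d(X)\ge 2$. So $\rho(G)\ge 0$ alone only gives $2$-edge-connectivity, as you correctly note before immediately contradicting yourself. For instance $T_{1,4,5}$ has $\rho=0$ on the trivial partition and $\rho=7$ on the bipartition isolating the degree-$5$ vertex; nothing about $\rho$ rules it out.

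What actually rules out small cuts is the hypothesis you already invoked in the first paragraph but then forgot to use here: $G$ has no problematic partition. If $[X,X^c]$ has size $c$ with $2\le c\le 5$, then $G/\{X,X^c\}\cong cK_2\in\F$, so $\{X,X^c\}$ is problematic---contradiction. (Cuts of size $0$ or $1$ are handled by $\rho\ge 0$.) This is exactly how the paper proceeds: it first observes that ``since $G$ has no problematic partition, $G$ is $6$-edge-connected,'' then notes that a $6$-edge-connected $T_{a,b,c}$ with $a+b+c\in\{10,11\}$ lies in $\F$ (so the trivial partition is problematic), forcing $a+b+c\ge 12$. With $\max\{a,b,c\}\le 5$ and $a+b+c\ge 12$ one then extracts one of $T_{2,5,5}$, $T_{3,4,5}$, $T_{4,4,4}$ as a subgraph, contradicting (a). Once you replace your incorrect $\rho<0$ claim with the problematic-partition argument, the rest of your outline goes through.
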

\begin{proof}
The proof of the first statement is identical to that of
Claim~\ref{CL: nostrongZ5}, with $\Z_7$ in place of $\Z_5$.  Note that (a) follows from the first statement and
Lemma~\ref{Z7-contract-configs}.

Now we prove (b). Clearly $|G|\ge 2$, so
first suppose $|G|=2$. Since $\rho(G)\ge 0$, we know $\|G\|\ge 2$.
Since $G$ has no problematic partition, we know $\|G\|\ge 6$.  But now $G$ contains
$6K_2$, which contradicts (a).  So assume $|G|=3$, that is $G=T_{a,b,c}$.  Since
$\rho(G)\ge 0$, we know $a+b+c\ge 10$.  Since $G$ has no problematic partition,
$G$ is 6-edge-connected.  By the definition of $\F$, this implies that $a+b+c\ge
12$.  Recall that $G$ contains no $6K_2$ by (a); thus $\max\{a,b,c\}\le 5$.  A
short case analysis shows that $G$ contains as a subgraph one of $T_{2,5,5}$,
$T_{3,4,5}$, or $T_{4,4,4}$.  Each of these has 12 edges and is
6-edge-connected, which contradicts (a).
\end{proof}

\begin{claim}
\label{CL: nontrivialge716}
If ${\P}=\{P_1, P_2,\dots, P_t\}$ is a nontrivial partition of $V(G)$, then
\begin{enumerate}
\item[(a)] $\rho_G({\P})\ge 7$; and
\item[(b)] $\rho_G({\P})\ge 12$ if ${\P}$ is normal.
\end{enumerate}
\end{claim}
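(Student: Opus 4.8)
The plan is to mirror the structure of the proof of Claim~\ref{CL: nontrivialge4} in the modulo-$5$ setting, using Lemma~\ref{mod7-key-lem} (Eq.~\eqref{EQ: wH2}) and Lemma~\ref{LEM: smallH2} in place of their $\Z_5$ analogues. First I would dispose of the two easy kinds of nontrivial partitions that are not normal. If $\P$ is almost trivial, with unique doubled part $P_1$, then $\rho_G(\P) = \rho_G(\{V(G)\}) - 2(\|G[P_1]\|) + 17$, and since $G$ contains no $6K_2$ by Claim~\ref{CL: nostrongZ7}(a) we have $\|G[P_1]\|\le 5$, so $\rho_G(\P)\ge \rho_G(\{V(G)\}) + 7$; combined with $\rho_G(\{V(G)\}) = 0 - 17 + 31 = 14$ this gives far more than $7$. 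If $\P = \{V(G)\}$, then $\rho_G(\P) = 14 \ge 7$. Since $|G|\ge 4$ by Claim~\ref{CL: nostrongZ7}(b), every remaining nontrivial partition is normal, so for both (a) and (b) it suffices to handle normal partitions.

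For a normal partition $\P = \{P_1,\dots,P_t\}$, assume by symmetry $|P_1|>1$ and set $H = G[P_1]$. For any partition $\Q$ of $V(H)$, Eq.~\eqref{EQ: wH2} rewrites as $\rho_H(\Q) = \rho_G(\Q\cup(\P\setminus\{P_1\})) - \rho_G(\P) + 14$, exactly as Eq.~\eqref{EQ: wPwQ} did with $8$ replaced by $14$. For part (a): if $\rho_G(\P)\le 6$, then since $\rho_G$ of any partition of $G$ is $\ge 0$, we get $\rho_H(\Q)\ge 8$ for every partition $\Q$ of $H$, so $\rho(H)\ge 8$ and $H$ is strongly $\Z_7$-connected by Lemma~\ref{LEM: smallH2}(b), contradicting Claim~\ref{CL: nostrongZ7}. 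For part (b): suppose $\rho_G(\P)\le 11$. If $\P$ has a second nontrivial part, say $|P_2|>1$, then $\Q\cup(\P\setminus\{P_1\})$ is always nontrivial, so part (a) gives $\rho_G(\Q\cup(\P\setminus\{P_1\}))\ge 7$, hence $\rho_H(\Q)\ge 7 - 11 + 14 = 10\ge 8$ for all $\Q$, so again Lemma~\ref{LEM: smallH2}(b) contradicts Claim~\ref{CL: nostrongZ7}. Otherwise $P_1$ is the unique nontrivial part and $|P_1|\ge 3$; then for nontrivial $\Q$ the refinement is nontrivial in $G$, so $\rho_H(\Q)\ge 7 - 11 + 14 = 10$, while for the trivial $\Q^*$ we get $\rho_H(\Q^*)\ge 0 - 11 + 14 = 3$. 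Thus $\rho_H(\P')\ge 8$ for all nontrivial $\P'$ of $H$ and $\rho(H)\ge 3$, so by Lemma~\ref{LEM: smallH2}(a) the graph $H$ is strongly $\Z_7$-connected unless $H\in\F$; but $|H|=|P_1|\ge 3$ forces $H\cong T_{a,b,c}$, and $H\in\F$ would require $a+b+c\le 11$, i.e.\ $\rho(H)\le 2(11)-3(17)+31 = 2 < 3$, a contradiction. Hence $H$ is strongly $\Z_7$-connected, contradicting Claim~\ref{CL: nostrongZ7}.

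The only delicate point — and the analogue of where the $\Z_5$ proof needed its case split — is the final sub-case of (b), where $H$ has just three vertices: there one must check that the weight bounds $\rho(H)\ge 3$ and $\rho_H(\P')\ge 8$ for all nontrivial $\P'$ genuinely rule out membership in $\F$, which is what Lemma~\ref{LEM: smallH2}(a) requires. I would verify this by the arithmetic above: $T_{a,b,c}\in\F$ needs $a+b+c\in\{10,11\}$ (when it is $6$-edge-connected), and all such triangles have $\rho = 2(a+b+c) - 20 \le 2 < 3$, so the bound $\rho(H)\ge 3$ excludes them. I do not anticipate any obstacle beyond keeping the constants $17$ and $31$ (and the derived threshold $14 = 31-17$) straight throughout; the logical skeleton is identical to Claim~\ref{CL: nontrivialge4}.
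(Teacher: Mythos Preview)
Your plan is essentially identical to the paper's proof, and the treatment of normal partitions in both (a) and (b) matches it line for line. There is one slip, however, in your handling of the almost-trivial case. You write $\rho_G(\P) = \rho_G(\{V(G)\}) - 2\|G[P_1]\| + 17$ and then substitute $\rho_G(\{V(G)\}) = 14$, concluding $\rho_G(\P) \ge 21$. That formula is wrong: the correct comparison is between the almost-trivial partition $\P$ and the \emph{trivial} partition $\P^*$ (each vertex a singleton), not the single-part partition $\{V(G)\}$. Merging two singletons $\{u\},\{w\}$ into one part gives $\rho_G(\P) = \rho_G(\P^*) - 2\mu(uw) + 17$, and since $\rho_G(\P^*) \ge \rho(G) \ge 0$ and $\mu(uw)\le 5$ (no $6K_2$), this yields $\rho_G(\P) \ge 7$ --- exactly the bound you need, but not $21$. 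Indeed, if $\rho_G(\P^*)=0$ and $\mu(uw)=5$ then $\rho_G(\P)=7$ on the nose, so your claimed inequality $\rho_G(\P)\ge 21$ is false in general. With this correction the argument is complete and agrees with the paper.
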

\begin{proof}
We argue by contradiction.  For an almost trivial partition ${\P}$, we have
$\rho_G({\P})\ge \rho_G(V(G))-2(5)+17\ge 7$, since $G$ does not contain $6K_2$ by
Claim~\ref{CL: nostrongZ7}.  If $\P=\{V(G)\}$, then $w_G(\P)=0-17+31=14$.
Since $|G|\ge 4$ by Claim~\ref{CL: nostrongZ7}(b),
we now only need to consider the weight of normal partitions.

Let ${\P}=\{P_1, P_2,\dots, P_t\}$ be a normal partition of $V(G)$.
We may assume $|P_1|> 1$ and let $H=G[P_1]$.
For any partition ${\Q}=\{Q_1, Q_2,\dots, Q_s\}$ of $V(H)$, by Eq.~(\ref{EQ:
wH2}) the refinement ${\Q}\cup ({\P}\setminus\{P_1\})$ of $\P$ satisfies
\begin{eqnarray}\label{EQ: rPrQ}
 \rho_H({\Q})= \rho_G({\Q}\cup ({\P}\setminus\{P_1\})) - \rho_G({\P})+14.
\end{eqnarray}

(a) We first show that $\rho_G({\P})\ge 7$.  If $\rho_G({\P})\le 6$, then
Eq.~(\ref{EQ: rPrQ}) implies that $\rho_H({\Q})\ge 8$ for any partition
${\Q}$ of $H$, since $\rho_G({\Q}\cup ({\P}\setminus\{P_1\}))\ge 0$. Hence
$\rho(H)\ge 8$ and $H$ is strongly $\Z_7$-connected by Lemma~\ref{LEM:
smallH2}(b), which contradicts Claim~\ref{CL: nostrongZ7}. This proves (a).

(b) We now show that $\rho_G({\P})\ge 12$. Suppose, to the
contrary, that $\rho_G({\P})\le 11$. If ${\P}$ contains at least
two nontrivial parts, say $|P_2|>1$, then (a) implies $\rho_G({\Q}\cup
({\P}\setminus\{P_1\}))\ge 7$ for any partition ${\Q}$
of $H$. Hence $\rho(H)\ge 10$ by Eq.~(\ref{EQ: rPrQ}), and so $H$ is strongly
$\Z_7$-connected by Lemma \ref{LEM: smallH2}(b),  which contradicts Claim~\ref{CL: nostrongZ7}.
Assume instead that ${\P}$ contains a unique nontrivial part $P_1$ and $|P_1|\ge 3$.
For any nontrivial partition ${\Q}$ of $H$, the refinement ${\Q}\cup
({\P}\setminus\{P_1\})$ of $\P$ is a nontrivial partition of $G$, and so
$\rho_G({\Q}\cup ({\P}\setminus\{P_1\}))\ge  7$ by (a).   Thus
$\rho_H({\Q})\ge 10$ for any nontrivial partition ${\Q}$ of $H$
by Eq.~(\ref{EQ: rPrQ}). For the trivial partition ${\Q}^*$ of $H$, since
$\rho_G({\P})\le 11$, Eq.~(\ref{EQ: rPrQ}) implies $\rho_H({\Q}^*)\ge 3$.
Since $|H|=|P_1|\ge 3$, we know $H\not\cong aK_2$.  Since $\rho(H)\ge 3$, we
know $H\not\cong T_{a,b,c}$ with $a+b+c\le 11$.  So Lemma \ref{LEM: smallH2}(a)
implies that $H$ is strongly $\Z_7$-connected, which contradicts Claim \ref{CL:
nostrongZ7}.
\end{proof}

The next two claims follow from Claim \ref{CL: nontrivialge716}.
They give lower bounds on the edge-connectivity of $G$.

\begin{claim}
\label{CL: 2nontrivialge14z7}
For a partition ${\P}=\{P_1, P_2,\dots, P_t\}$,
\begin{enumerate}
\item[(a)] if $|P_1|\ge 2$ and $|P_2|\ge 2$, then  $\rho({\P})\ge 14$; and
\item[(b)] if $|P_1|\ge 2$ and $|P_2|\ge 3$, then  $\rho({\P})\ge 19$.
\end{enumerate}
\end{claim}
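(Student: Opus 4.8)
The plan is to copy the proof of Claim~\ref{CL: 2nontrivialge6} almost verbatim, replacing the weight function $w$ by $\rho$ throughout and invoking the $\Z_7$-analogs of the ingredients used there: Lemma~\ref{mod7-key-lem} in place of Lemma~\ref{mod5-key-lem}, Claim~\ref{CL: nontrivialge716} in place of Claim~\ref{CL: nontrivialge4}, and Lemma~\ref{LEM: smallH2}(b) in place of Lemma~\ref{LEM: smallH}(c). Concretely, I would set $H=G[P_1]$; since $|P_1|>1$ and $P_2$ is a nonempty part of $\P$ disjoint from $P_1$, $H$ is a proper induced subgraph of $G$, so it is planar and satisfies $|H|+\|H\|<|G|+\|G\|$, making Lemma~\ref{LEM: smallH2} available. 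For an arbitrary partition ${\Q}=\{Q_1,\dots,Q_s\}$ of $V(H)$, form the refinement $\P'={\Q}\cup({\P}\setminus\{P_1\})$ of $\P$. Lemma~\ref{mod7-key-lem} then gives
\[
\rho_G(\P') = \rho_H({\Q}) + \rho_G({\P}) - 14.
\]
The structural point driving both parts is that $|P_2|\ge 2$ forces $\P'$ to be nontrivial (the part $P_2$ has more than one vertex), and $|P_2|\ge 3$ forces $\P'$ to be normal.

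For part (a), I would argue by contradiction: suppose $\rho_G(\P)\le 13$. Then for every partition ${\Q}$ of $V(H)$, since $\P'$ is nontrivial, Claim~\ref{CL: nontrivialge716}(a) gives $\rho_G(\P')\ge 7$, and therefore $\rho_H({\Q}) = \rho_G(\P') - \rho_G(\P) + 14 \ge 7 - 13 + 14 = 8$. Hence $\rho(H)\ge 8$, so by Lemma~\ref{LEM: smallH2}(b) the graph $H$ is strongly $\Z_7$-connected, contradicting Claim~\ref{CL: nostrongZ7}. This yields $\rho_G(\P)\ge 14$.

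For part (b), the argument is identical, but now I would suppose $\rho_G(\P)\le 18$; since $|P_2|\ge 3$, the refinement $\P'$ is normal, so Claim~\ref{CL: nontrivialge716}(b) gives $\rho_G(\P')\ge 12$, whence $\rho_H({\Q}) \ge 12 - 18 + 14 = 8$ for every partition ${\Q}$ of $V(H)$, again forcing $\rho(H)\ge 8$ and contradicting Claim~\ref{CL: nostrongZ7} via Lemma~\ref{LEM: smallH2}(b). I do not anticipate any real obstacle here: the whole argument is bookkeeping with the constant $31-17=14$ coming from Lemma~\ref{mod7-key-lem}, and the only two things needing a moment's care are (i) confirming that the refinement $\P'$ inherits the correct combinatorial status (nontrivial in part (a), normal in part (b)) from the size hypothesis on $P_2$, and (ii) noting that $H$ is a proper subgraph so that the minimality of $G$, and hence Lemma~\ref{LEM: smallH2}, applies. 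Both are immediate.
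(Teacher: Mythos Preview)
Your proposal is correct and matches the paper's proof essentially verbatim: the paper also sets $H=G[P_1]$, applies Eq.~(\ref{EQ: wH2}) to get $\rho_H(\Q)=\rho_G(\Q\cup(\P\setminus\{P_1\}))-\rho_G(\P)+14$, and then in each part assumes the contrary ($\rho_G(\P)\le 13$ or $\le 18$), invokes Claim~\ref{CL: nontrivialge716}(a) or (b) on the refinement to force $\rho_H(\Q)\ge 8$, and concludes via Lemma~\ref{LEM: smallH2}(b) and Claim~\ref{CL: nostrongZ7}. Your added remarks checking that $\P'$ is nontrivial (resp.\ normal) and that $H$ is a proper subgraph are implicit in the paper but make the argument slightly more self-contained.
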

\begin{proof}
Let $H=G[P_1]$ and ${\Q}=\{Q_1, Q_2,\dots, Q_s\}$ be a partition of $H$. By
Eq.~(\ref{EQ: wH2}),
\begin{eqnarray}\nonumber
 \rho_H({\Q})= \rho_G({\Q}\cup ({\P}\setminus\{P_1\})) - \rho_G({\P})+14.
\end{eqnarray}

(a) If $\rho_G({\P})\le 13$, then $\rho_H({\Q})\ge 8$ for any partition ${\Q}$ of
$H$ since $\rho_G({\Q}\cup ({\P}\setminus\{P_1\}))\ge 7$ by Claim \ref{CL:
nontrivialge716}(a).  So $H$ is strongly $\Z_5$-connected by Lemma~\ref{LEM:
smallH2}(b), which contradicts Claim~\ref{CL: nostrongZ7}.

(b) Similarly, if $\rho_G({\P})\le 18$, then $\rho_H({\Q})\ge 8$ for any
partition ${\Q}$ of $H$ since $\rho_G({\Q}\cup ({\P}\setminus\{P_1\}))\ge 12$ by
Claim \ref{CL: nontrivialge716}(b).  Again $H$ is strongly $\Z_5$-connected by
Lemma~\ref{LEM: smallH2}(b), which contradicts Claim~\ref{CL: nostrongZ7}.
\end{proof}

\begin{claim}\label{CL: ess11}
Let  $[X, X^c]$ be an edge cut of $G$.
\begin{enumerate}
\item[(a)] Now $|[X,X^c]|\ge 8$. That is, $G$ is $8$-edge-connected.
\item[(b)] If $|X|\ge 2$ and $|X^c|\ge 3$, then $|[X,X^c]|\ge 11$.
\end{enumerate}
\end{claim}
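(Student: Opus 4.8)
The plan is to mimic exactly the proof of Claim~\ref{CL: ess7} in the $\Z_5$ section, translating the constants through the $\rho$ weight function. If $[X,X^c]$ is an edge cut of $G$, then $\P=\{X,X^c\}$ is a partition of $V(G)$ with two parts, and since $|G|\ge 4$ by Claim~\ref{CL: nostrongZ7}(b), at least one of $X$ or $X^c$ has size at least~$2$. So $\P$ is nontrivial, and in fact it is normal whenever both sides have size at least~$2$, and even when one side is a singleton it is normal because $\P\ne\{V(G)\}$ and $\P$ is not almost trivial (an almost trivial partition has $|G|-1\ge 3$ parts). Thus $\P$ is always normal, so Claim~\ref{CL: nontrivialge716}(b) gives $\rho_G(\P)\ge 12$. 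Writing $\rho_G(\P)=d(X)+d(X^c)-17(2)+31=2|[X,X^c]|-3$, the inequality $2|[X,X^c]|-3\ge 12$ yields $|[X,X^c]|\ge 8$ (since the count is an integer, $2|[X,X^c]|\ge 15$ forces $|[X,X^c]|\ge 8$). This proves (a).

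For part (b), if additionally $|X|\ge 2$ and $|X^c|\ge 3$, then Claim~\ref{CL: 2nontrivialge14z7}(b) applies (with the roles of $P_1,P_2$ played by $X,X^c$) and gives $\rho_G(\P)\ge 19$. Again using $\rho_G(\P)=2|[X,X^c]|-3$, we get $2|[X,X^c]|-3\ge 19$, i.e.\ $|[X,X^c]|\ge 11$, which is exactly the claimed bound. So both parts reduce to a one-line substitution into the formula for $\rho_G$ of a two-part partition, combined with the relevant weight bound already established.

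There is essentially no obstacle here; the only thing to be careful about is confirming that $\{X,X^c\}$ genuinely counts as \emph{normal} so that Claim~\ref{CL: nontrivialge716}(b) can be invoked in part~(a) — this follows from $|G|\ge 4$, which rules out both the almost-trivial case and the $\{V(G)\}$ case. (Alternatively, for part~(a) one could split into the case where both sides have size $\ge 2$, handled by Claim~\ref{CL: 2nontrivialge14z7}(a) giving the even stronger $\rho_G(\P)\ge 14$, and the case of a singleton side, handled directly.) Everything else is arithmetic with the constants $17$ and $31$ in place of $11$ and $19$, exactly as in the $\Z_5$ analogue.
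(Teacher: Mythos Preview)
Your proof is correct and follows essentially the same approach as the paper: form the two-part partition $\P=\{X,X^c\}$, argue it is normal (using $|G|\ge 4$), then invoke Claim~\ref{CL: nontrivialge716}(b) for part~(a) and Claim~\ref{CL: 2nontrivialge14z7}(b) for part~(b), substituting into $\rho_G(\P)=2|[X,X^c]|-3$. Your explicit justification that a two-part partition is automatically normal when $|G|\ge 4$ is slightly more detailed than the paper's, but the argument is the same.
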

\begin{proof}
(a) Let $\P=\{X, X^c\}$. 
Since $|G|\ge 4$ by Claim \ref{CL: nostrongZ7}(b), the partition $\P$ is normal.
Now Claim~\ref{CL: nontrivialge716}(b) gives
$12\le \rho_G(\P)=2|[X,X^c]|-34+31$, which implies $|[X,X^c]|\ge 8$.

(b) If $|X|\ge 2$ and $|X^c|\ge 3$, then $\rho_G(\P)\ge 19$ by Claim~\ref{CL: 2nontrivialge14z7}(b).
So $19\le \rho_G({\cal P})=2|[X,X^c]|-34+31$, which implies $|[X,X^c]|\ge 11$.
\end{proof}

\begin{figure}[ht]

\setlength{\unitlength}{0.08cm}

\begin{center}

\begin{picture}(190,40)

\put(5,0){\circle*{2}}\put(35,0){\circle*{2}}\put(20,30){\circle*{2}}
\qbezier(5, 0)(20, 4)(35, 0)\qbezier(5, 0)(20, -4)(35, 0)\qbezier(5, 0)(20, 0)(35, 0)
\qbezier(5, 0)(20, 8)(35, 0)\qbezier(5, 0)(20, -8)(35, 0)
\qbezier(5, 0)(5, 0)(20, 30)\qbezier(35, 0)(35, 0)(20, 30)

\put(52,15){\circle*{2}}\put(82,15){\circle*{2}}\put(67,30){\circle*{2}}\put(67,0){\circle*{2}}
\qbezier(67, 0)(67, 0)(52, 15)\qbezier(67, 0)(67, 0)(82, 15)\qbezier(67,
30)(67, 30)(52, 15)\qbezier(67, 30)(67, 30)(82, 15)
\qbezier(52, 15)(67, 21)(82, 15)\qbezier(52, 15)(67, 17)(82, 15)\qbezier(52,
15)(67, 13)(82, 15)\qbezier(52, 15)(67, 9)(82, 15)

\put(100,0){\circle*{2}}\put(130,0){\circle*{2}}\put(102,30){\circle*{2}}\put(128,30){\circle*{2}}
\qbezier(100, 0)(115, 4)(130, 0)\qbezier(100, 0)(115, -4)(130, 0)\qbezier(100, 0)(115, 0)(130, 0)
\qbezier(100, 0)(115, 8)(130, 0)\qbezier(100, 0)(115, -8)(130, 0)
\qbezier(100, 0)(100, 0)(102, 30)\qbezier(130, 0)(130, 0)(128, 30)\qbezier(102, 30)(102, 30)(128, 30)

\put(150,0){\circle*{2}}\put(180,0){\circle*{2}}\put(165,30){\circle*{2}}
\qbezier(150, 0)(165, 2)(180, 0)\qbezier(150, 0)(165, -2)(180, 0)\qbezier(150, 0)(165, 6)(180, 0)
\qbezier(150, 0)(165, -6)(180, 0)
\qbezier(150, 0)(158, 19)(165, 30)\qbezier(180, 0)(172, 19)(165, 30)
\qbezier(150, 0)(158, 10)(165, 30)\qbezier(180, 0)(172, 10)(165, 30)

\put(16,-10){\footnotesize{$T_{1,1,5}$}}\put(111,-10){\footnotesize{$T^{\,\bullet}_{1,1,5}$}}\put(63,-10){\footnotesize{$T_{1,1,5}\dit$}}\put(161,-10){\footnotesize{$T_{2,2,4}$}}
\end{picture}
\end{center}
\vspace{0.6cm}
\caption{\small\it The graphs $T_{1,1,5}, T^{\,\bullet}_{1,1,5},T_{1,1,5}\dit, T_{2,2,4}$.
}
\label{FIG: Y123}
\end{figure}

Let \Emph{$T^{\,\bullet}_{1,1,5}$} denote the graph formed from $T_{1,1,5}$ by
subdividing an edge of multiplicity 1. %
We now show that $G$ contains none of the folllowing (shown in Figure~\ref{FIG:
Y123}) as subgraphs: $T_{1,1,5}$, $T_{1,1,5}\dit$,
$T^{\,\bullet}_{1,1,5}$, and
$T_{2,2,4}$.

\begin{claim}\label{CL: noY1}
 $G$ has no copy of $T_{1,1,5}$.
\end{claim}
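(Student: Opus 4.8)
The plan is to mimic the proof of Claim~\ref{CL: noW1}, using a lifting reduction of the first type. Suppose $G$ contains a copy of $T_{1,1,5}$, with vertices $x,y,z$ where $\mu(xy)=5$ and $\mu(xz)=\mu(yz)=1$. First I would lift the pair $xz,zy$ to create a new edge $xy$; this preserves planarity (the edge $xy$ already exists) and increases $\mu(xy)$ to $6$. The vertices $x,y$ now induce a copy of $6K_2$, which is strongly $\Z_7$-connected by Lemma~\ref{Z7-contract-configs}. So I would contract this $6K_2$ (contract $xy$) to obtain a graph $G'$; by Lemma~\ref{reduc-lem}, it suffices to show $G'$ is strongly $\Z_7$-connected, since then $G$ is too, contradicting Claim~\ref{CL: nostrongZ7}.

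To show $G'$ is strongly $\Z_7$-connected, I would verify the hypotheses of Lemma~\ref{LEM: smallH2}(b) (or possibly (c-ii)), namely $\rho(G')\ge 8$ (respectively $\ge 3$ together with $6$-edge-connectedness). For the trivial partition $\Q^*$ of $G'$: the new contracted vertex $w$ has degree $d_G(z)-2 = 1$ less than nothing directly, but more carefully, $w$ absorbs all edges from $\{x,y\}$ to the rest of $G$ minus the $6$ parallel $xy$-edges and the two edges $xz,zy$; meanwhile $z$ is deleted. Counting, we go from $G$ to $G'$ by deleting vertices $x,y,z$ and all edges among them (that's $5+1+1=7$ edges in $G$, or equivalently the $6$-edge $6K_2$ after lifting) and identifying; so $|G'| = |G|-2$ and $\|G'\| = \|G\| - 7$. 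Then $\rho_{G'}(\Q^*) = 2\|G'\| - 17|G'| + 31 = 2(\|G\|-7) - 17(|G|-2) + 31 = \rho_G(\text{trivial}) - 14 + 34 = \rho_G(\Q^*_G) + 20 \ge 20$, comfortably at least $8$. (I should double-check this arithmetic carefully against the conventions used in Claims~\ref{CL: noW1}--\ref{CL: noW3}, which track the change differently.)

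For a nontrivial partition $\Q'$ of $G'$, it corresponds to a partition $\Q$ of $G$ in which the contracted vertex $w$ is replaced by $\{x,y\}$ and $z$ is reinserted into the part containing $w$ (since $z$ is only adjacent to $x$ and $y$). This $\Q$ is a normal partition of $G$ — it has a part of size $\ge 3$ — so $\rho_G(\Q)\ge 12$ by Claim~\ref{CL: nontrivialge716}(b). The edges of $G$ not counted in $\rho_{G'}(\Q')$ but counted in $\rho_G(\Q)$ are just $xz$ and $zy$, contributing at most $2\cdot 2 = 4$ to the degree sum (each crossing at most... actually each of $xz,zy$ lies within the part, so contributes $0$ to $\rho_G(\Q)$; the real bookkeeping is that $\rho_{G'}(\Q') \ge \rho_G(\Q) - 4$ as in the earlier claims). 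Hence $\rho_{G'}(\Q') \ge 12 - 4 = 8$. Combining both cases, $\rho(G')\ge 8$, so Lemma~\ref{LEM: smallH2}(b) gives that $G'$ is strongly $\Z_7$-connected, completing the contradiction.

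The main obstacle I anticipate is getting the bookkeeping exactly right: the precise relationship between $\rho_{G'}(\Q')$ and $\rho_G(\Q)$ under the lift-then-contract operation, and confirming that every nontrivial partition of $G'$ indeed lifts to a \emph{normal} (not merely nontrivial) partition of $G$ so that the stronger bound $12$ from Claim~\ref{CL: nontrivialge716}(b) applies rather than just the bound $7$ from part (a). If only part (a) applied, I would get $\rho_{G'}(\Q')\ge 7 - 4 = 3$, which would still suffice provided $G'$ is $6$-edge-connected (via Lemma~\ref{LEM: smallH2}(c-ii)); $6$-edge-connectedness of $G'$ follows since $G$ is $8$-edge-connected by Claim~\ref{CL: ess11}(a) and contracting removes a vertex whose neighborhood sees only $x,y$, so edge cuts drop by at most $2$. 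Either route closes the argument, so the claim is robust; I just need to pick whichever bookkeeping is cleanest.
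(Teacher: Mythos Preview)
Your approach is exactly the paper's: lift $xz,zy$, contract the resulting $6K_2$, and apply Lemma~\ref{LEM: smallH2}. But there is a genuine bookkeeping error that derails your primary route.

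The vertex $z$ is \emph{not} deleted when you form $G'$. Lifting removes only the two edges $xz,zy$ (and adds one $xy$); $z$ remains in $G'$ with degree $d_G(z)-2$. Hence $|G'|=|G|-1$ and $\|G'\|=\|G\|-7$, so the trivial partition gives
\[
\rho_{G'}(\Q^*)=\rho_G(\text{trivial})-14+17\ge 3,
\]
not $\ge 20$. This is exactly the paper's bound. Consequently your primary route via Lemma~\ref{LEM: smallH2}(b) (which needs $\rho(G')\ge 8$) fails at the trivial partition.

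Your backup route via Lemma~\ref{LEM: smallH2}(c-ii) is the right one, and it is precisely what the paper does. For nontrivial $\Q'$ of $G'$, the correct correspondence is to replace the contracted vertex $w$ by $\{x,y\}$ only (do not touch $z$; it is already sitting in some part of $\Q'$). The resulting $\Q$ has at most $|G'|-1=|G|-2$ parts, hence is normal (or equals $\{V(G)\}$), so $\rho_G(\Q)\ge 12$ by Claim~\ref{CL: nontrivialge716}(b), and $\rho_{G'}(\Q')\ge \rho_G(\Q)-4\ge 8$. Combined with the trivial bound $\ge 3$, we get $\rho(G')\ge 3$. Since $G$ is $8$-edge-connected (Claim~\ref{CL: ess11}) and the lift removes at most two edges from any cut, $G'$ is $6$-edge-connected, and Lemma~\ref{LEM: smallH2}(c-ii) finishes. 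So your instinct that ``either route closes the argument'' is wrong for route (b) but right for route (c-ii); once you fix the model of $G'$, the proof goes through cleanly.
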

\begin{proof}
Suppose $G$ contains a copy of $T_{1,1,5}$ with vertices $x,y,z$ and
$\mu(xy)=5$. We lift $xz, zy$ to become a new edge $xy$ and contract the
resulting $6K_2$ induced by $\{x,y\}$.  Let $G'$ denote the resulting graph.
The trivial partition ${\Q}^*$ of $G'$ satisfies $\rho_{G'}({\Q}^*)\ge
\rho(G)-2(7)+17\ge 3$. Every nontrivial partition ${\Q'}$ of $G'$
corresponds to a normal partition $\Q$ of $G$ in which the contracted vertex is
replaced by $\{x, y\}$. Since $xz, zy$ are the only two edges possibly
counted in $\rho_{G}({\Q})$ but not in $\rho_{G'}({\Q'})$, we have
$\rho_{G'}({\Q'})\ge \rho_{G}({\Q})- 2(2)\ge 8$, by Claim~\ref{CL:
nontrivialge716}(b). So $\rho(G')\ge 3$. Since $G$ is $8$-edge-connected by
Claim~\ref{CL: ess11}, graph $G'$ is $6$-edge-connected, and so
$G'$ is strongly $\Z_7$-connected by Lemma~\ref{LEM: smallH2}(c-ii).
This is a lifting reduction of the first type.
It shows that $G$ is strongly $\Z_7$-connected, which
contradicts Lemma~\ref{CL: nostrongZ7}.
\end{proof}

\begin{claim}
\label{CL: Vge5}
$|G|\ge 5$.
\end{claim}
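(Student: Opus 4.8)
The plan is to argue by contradiction: suppose $|G|=4$, and use the accumulated structural restrictions to show that $G$ is strongly $\Z_7$-connected, contradicting Claim~\ref{CL: nostrongZ7}. By Claim~\ref{CL: ess11}(a), $G$ is $8$-edge-connected, so $\delta(G)\ge 8$. Also $\mu(G)\le 5$, since a pair of vertices joined by $6$ parallel edges would contain $6K_2$, forbidden by Claim~\ref{CL: nostrongZ7}(a). So on $4$ vertices with every vertex of degree at least $8$ and every edge-multiplicity at most $5$, I first want to pin down $\|G\|$. Since $\rho(G)\ge 0$ and the trivial partition $\Q^*$ on $4$ vertices gives $\rho_G(\Q^*)=2\|G\|-4(17)+31=2\|G\|-37\ge 0$, we get $\|G\|\ge 19$. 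On the other hand, with $\mu(G)\le 5$ and only $\binom{4}{2}=6$ pairs, $\|G\|\le 30$; I expect the reducible configurations already established to push $\|G\|$ down much further, ideally all the way to $\|G\|=19$.

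The key step is therefore to show that $G$ cannot have too many edges. The natural tools are Lemma~\ref{K4weightsZ7-lem}, which says that any graph on $4$ vertices with $19$ edges, $\mu\le 5$, and $\delta\ge 8$ is strongly $\Z_7$-connected, together with the fact that $G$ contains no $T_{1,1,5}$ (Claim~\ref{CL: noY1}) and no $3K_4^+$ (Claim~\ref{CL: nostrongZ7}(a)). First I would handle the case $\|G\|=19$ directly: Lemma~\ref{K4weightsZ7-lem} immediately gives that $G$ is strongly $\Z_7$-connected, contradicting Claim~\ref{CL: nostrongZ7}. For $\|G\|\ge 20$, the idea is that $G$ then contains a spanning subgraph $H$ on the same $4$ vertices with $\|H\|=19$ and $\delta(H)\ge 8$ and $\mu(H)\le 5$: one deletes $\|G\|-19$ edges while being careful to keep every degree at least $8$. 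Since $\delta(G)\ge 8$ and $\|G\|\ge 20$ means the degree sum is at least $40>4\cdot 8$, there is slack to delete edges greedily from high-degree vertices without dropping any degree below $8$, and one can also avoid reducing any multiplicity below what is needed — though I should double-check that the greedy deletion can always keep a valid $19$-edge, $\delta\ge 8$ graph; this is the main technical point. Then $H$ is strongly $\Z_7$-connected by Lemma~\ref{K4weightsZ7-lem}, hence so is its supergraph $G$ (adding edges preserves strong $\Z_{2p+1}$-connectedness, as one can orient the extra edges arbitrarily after fixing an orientation of $H$), again contradicting Claim~\ref{CL: nostrongZ7}.

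An alternative, perhaps cleaner, route avoids the deletion argument: observe that on $4$ vertices a graph with $\delta\ge 8$, $\mu\le 5$ and $\|G\|\ge 19$ must contain one of a short list of $19$-edge subgraphs, and since adding edges preserves strong $\Z_7$-connectedness it suffices to treat those; Lemma~\ref{K4weightsZ7-lem} covers exactly this. Either way, the crux is combining the degree bound $\delta(G)\ge 8$ from Claim~\ref{CL: ess11}(a), the multiplicity bound $\mu(G)\le 5$ from Claim~\ref{CL: nostrongZ7}(a), and the lower bound $\|G\|\ge 19$ from $\rho(G)\ge 0$, to land in the hypothesis of Lemma~\ref{K4weightsZ7-lem}.

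The main obstacle I anticipate is not conceptual but bookkeeping: verifying that when $\|G\|>19$ one really can delete down to a valid $19$-edge graph with $\delta\ge 8$ and $\mu\le 5$ on $4$ vertices, i.e.\ that no degree constraint gets violated along the way. A safe way to sidestep this is to note that the sum of degrees is $2\|G\|\ge 38$ with each degree in $\{8,\dots,15\}$ (since $\mu\le 5$ forces $d(v)\le 15$), pick any vertex $v$ of degree $\ge 9$ and any neighbor $w$ with $d(w)\ge 9$ — such a pair exists whenever $\|G\|\ge 20$ by a counting argument — and delete one copy of $vw$; repeating, $\delta$ never drops below $8$ and $\mu$ never increases, so we reach $\|H\|=19$. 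I would write this out carefully, then finish by invoking Lemma~\ref{K4weightsZ7-lem} and Claim~\ref{CL: nostrongZ7} to derive the contradiction, concluding $|G|\ge 5$.
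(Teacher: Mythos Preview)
Your proof is correct, but it differs from the paper's in how the case $\|G\|>19$ is handled.

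The paper deletes a \emph{single} edge $e$ and applies Lemma~\ref{LEM: smallH2}(c-i) to $G'=G-e$: since $G$ is $8$-edge-connected, $G'$ is $6$-edge-connected; since $|G'|=4$, $G'\notin\F$; and for every nontrivial partition $\P$, $\rho_{G'}(\P)\ge\rho_G(\P)-2\ge 5$ by Claim~\ref{CL: nontrivialge716}(a). Hence $G'$ is strongly $\Z_7$-connected, contradicting Claim~\ref{CL: nostrongZ7}. This uses the minimality of $G$ (through Lemma~\ref{LEM: smallH2}) and is a two-line argument.

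You instead delete repeatedly down to a $19$-edge spanning subgraph and invoke Lemma~\ref{K4weightsZ7-lem} directly for that subgraph. This avoids appealing to minimality in the $\|G\|>19$ case, at the cost of the small counting argument needed to show an edge with both endpoints of degree $\ge 9$ always exists while $\|G\|\ge 20$. (Your sketch of this is fine: if no such edge existed, every edge would meet the set $S$ of degree-$8$ vertices, and a short case analysis on $|S|$ contradicts $\|G\|\ge 20$ together with $\mu\le 5$.) Both approaches handle $\|G\|=19$ identically via Lemma~\ref{K4weightsZ7-lem}. The paper's route is shorter and fits the established pattern of using Lemma~\ref{LEM: smallH2}; yours is more self-contained. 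Note that the tools you listed as ``natural'' --- no $T_{1,1,5}$ and no $3K_4^+$ --- are not actually needed in either argument.
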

\begin{proof}
Suppose the claim is false.  Claim~\ref{CL: nostrongZ7}(b) implies $|G|=4$.
Since $\rho(G)\ge 0$, the trivial partition shows that $\|G\|\ge 19$.  First
suppose $\|G\|>19$, and let $G'=G-e$, for some arbitrary edge $e$.  Since
$\|G'\|<\|G\|$, we will apply
Lemma~\ref{LEM: smallH2}(c-i) to prove $G'$ is strongly $\Z_7$-connected.  Since $|G'|=4$, we know $G'\notin \F$. So it
suffices to show that $G'$
is 6-edge-connected and $\rho_{G'}(\P)\ge 3$ for every nontrivial partition
$\P$.  The first condition holds because $G$ is 8-edge-connected, by
Claim~\ref{CL: ess11}(a).  The second holds because $\rho_{G'}(\P)\ge
\rho_G(\P)-2\ge 5$, by Claim~\ref{CL: nontrivialge716}(a).  So $G'$ is strongly
$\Z_7$-connected by Lemma~\ref{LEM: smallH2}(c-i), which contradicts
Claim~\ref{CL: nostrongZ7}.

Instead assume $\|G\|=19$.  Claim~\ref{CL: ess11}(a) implies $\delta(G)\ge 8$.  Since
$G$ contains no $6K_2$ by Claim~\ref{CL: nostrongZ7}(a), we know $\mu(G)\le 5$.
Now Lemma~\ref{K4weightsZ7-lem} shows that $G$ is strongly $\Z_7$-connected.
Thus, $G$ is not a counterexample, which proves the claim.
\end{proof}

\begin{claim}\label{CL: noY2}
$G$ has no copy of $T_{1,1,5}\dit$.
\end{claim}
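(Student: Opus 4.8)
The plan is to follow the template of Claim~\ref{CL: noY1}, using a lifting reduction of the first type that recreates a copy of $6K_2$. Suppose $G$ contains a copy of $T_{1,1,5}\dit$, with vertices $x,y,z_1,z_2$, where $z_1$ and $z_2$ are the two degree-$2$ vertices of $T_{1,1,5}\dit$, each adjacent to both $x$ and $y$, and where the configuration has $4$ parallel $xy$-edges. Since $z_1$ has an edge to each of $x$ and $y$, if $\mu_G(xy)\ge 5$ then $G$ would contain a copy of $T_{1,1,5}$, contradicting Claim~\ref{CL: noY1}; hence $\mu_G(xy)=4$. First I would lift the pair $xz_1,z_1y$ to a new edge $xy$, and likewise lift $xz_2,z_2y$ to another new edge $xy$; both lifts are permitted, and preserve planarity, because the edge $xy$ already exists. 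Now $\{x,y\}$ spans a $6K_2$, which is strongly $\Z_7$-connected by Lemma~\ref{Z7-contract-configs}. Contracting it yields a planar graph $G'$ with $|G'|=|G|-1$ and $\|G'\|=\|G\|-8$, and by the lifting reduction of the first type it suffices to show that $G'$ is strongly $\Z_7$-connected: that would make $G$ strongly $\Z_7$-connected, contradicting Claim~\ref{CL: nostrongZ7}.

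To prove $G'$ is strongly $\Z_7$-connected I would apply Lemma~\ref{LEM: smallH2}(c-i). For the partition condition, every nontrivial partition $\Q'$ of $G'$ comes from a partition $\Q$ of $G$, obtained by splitting the contracted vertex back into $\{x,y\}$ inside its part; a routine check shows that $\Q$ is normal, so $\rho_G(\Q)\ge 12$ by Claim~\ref{CL: nontrivialge716}(b) (and the sole exception $\Q=\{V(G)\}$ satisfies $\rho_G(\Q)=14$ directly). The only edges counted by $\rho_G(\Q)$ but possibly not by $\rho_{G'}(\Q')$ are $xz_1,z_1y,xz_2,z_2y$, so $\rho_{G'}(\Q')\ge\rho_G(\Q)-2(4)\ge 12-8=4\ge 3$. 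To rule out the exceptional conclusion $G'\cong T_{a,b,c}$ with $a+b+c\in\{10,11\}$, note that such a $G'$ has $|G'|=3$, forcing $|G|=4$ and contradicting Claim~\ref{CL: Vge5}.

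The step I expect to be the main obstacle is verifying the remaining hypothesis of Lemma~\ref{LEM: smallH2}(c-i): that $G'$ is $6$-edge-connected. Each of the two lifts reduces the size of a given edge cut $[S,S^c]$ of $G$ (say with $\{x,y\}\subseteq S$) by exactly $2$ if the corresponding $z_i$ lies in $S^c$, and by $0$ otherwise; so a cut can shrink by as much as $4$, and since $G$ is only guaranteed to be $8$-edge-connected (Claim~\ref{CL: ess11}(a)) this naively leaves only $4$-edge-connectivity. The fix is that a cut losing the full $4$ must separate $\{x,y\}$ from $\{z_1,z_2\}$, so each side of the cut has at least two vertices; if one side has at least three vertices then Claim~\ref{CL: ess11}(b) gives cut size at least $11$ in $G$ and hence at least $7$ in $G'$, while if both sides have exactly two vertices then $|G|=4$, again contradicting Claim~\ref{CL: Vge5}. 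Every other cut loses at most $2$, so its size stays at least $6$. Hence $G'$ is $6$-edge-connected, Lemma~\ref{LEM: smallH2}(c-i) applies, and we conclude as above. This is exactly the ``lift to a contractible configuration, then contract and reapply the inductive hypothesis'' pattern that drives the other reducibility claims in this section; the novelty here is the combined use of Claims~\ref{CL: noY1} and~\ref{CL: Vge5} to make it go through.
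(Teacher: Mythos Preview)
Your argument follows the paper's proof exactly: the same lifting reduction to $6K_2$, the same appeals to Claims~\ref{CL: nontrivialge716}(b), \ref{CL: ess11}, and~\ref{CL: Vge5}, and the same application of Lemma~\ref{LEM: smallH2}(c-i). The one omission is that Lemma~\ref{LEM: smallH2} carries the standing hypothesis $\rho(G')\ge 0$, so you must also bound the \emph{trivial} partition of $G'$; the paper's computation $\rho_{G'}(\Q^*)\ge \rho(G)-2(8)+17\ge 1$ does this and follows immediately from your stated $|G'|=|G|-1$ and $\|G'\|=\|G\|-8$. (Your preliminary appeal to Claim~\ref{CL: noY1} to force $\mu_G(xy)=4$ is correct but unnecessary; the paper simply works with $\mu(xy)\ge 4$.)
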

\begin{proof}
Suppose $G$ contains a copy of $T_{1,1,5}\dit$ with vertices $w,x,y,z$ and
$\mu(xy)=4$. We lift $xz, zy$ to become a new edge $xy$, and lift $xw,wy$ to become
another new edge $xy$, and then contract the resulting $6K_2$ to form a
new graph $G'$. The trivial partition ${\cal Q}^*$ of $G'$ satisfies
$\rho_{G'}({\cal Q}^*)\ge \rho(G)-2(8)+17\ge 1$. Every nontrivial
partition $\Q'$ of $G'$ corresponds to a normal partition $\Q$ of $G$
in which the contracted vertex is replaced by $\{x, y\}$. Since $xz, zy, xw, wy$
are the only edges possibly counted in $\rho_{G}({\cal Q})$ but not in
$\rho_{G'}({\Q'})$, Claim~\ref{CL: nontrivialge716}(b) implies
$\rho_{G'}({\Q'})\ge \rho_{G}({\Q})- 2(4)\ge 4$.  Since $w\neq z$,
Claim~\ref{CL: ess11}(a,b) implies $G'$ is $6$-edge-connected. Because
$|V(G')|=|V(G)|-1\ge 4$, we know $G'\not\cong T_{a,b,c}$ with
$a+b+c\in\{10,11\}$. Hence $G'$ is strongly $\Z_7$-connected by Lemma~\ref{LEM:
smallH2}(c-i).
This is a lifting reduction of the first type.
So $G$ is strongly $\Z_7$-connected, which is a contradiction.
\end{proof}

\begin{claim}
\label{CL: deltage10}
$G$ has minimum degree at least $10$. So $G$ is $10$-edge-connected by Claim~\ref{CL: ess11}.
\end{claim}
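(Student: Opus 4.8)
The plan is to rule out vertices of degree $8$ and $9$ (we already have $\delta(G)\ge 8$ by Claim~\ref{CL: ess11}(a)) by means of a lifting reduction of the second type. Fix any $\Z_7$-boundary $\beta$. The arithmetic fact used throughout is that orienting $m$ edges at a vertex $v$ lets $d^+(v)-d^-(v)$ take each of the $m+1$ consecutive same-parity values $-m,-m+2,\dots,m$, which represent all of $\Z_7$ as soon as $m\ge 6$. Also $|G|\ge 5$ by Claim~\ref{CL: Vge5}, so deleting one vertex leaves at least $4$ vertices, hence a graph outside $\F$ and not of the form $T_{a,b,c}$. Finally, if $G'$ arises from $G$ by deleting $v$ and possibly adding one edge $w_1w_2$ with $w_1,w_2\in N_G(v)$, then for a partition $\P'$ of $V(G')$ and $\P=\P'\cup\{\{v\}\}$ one checks from Definition~\ref{DEF: rho-partition} that $\rho_{G'}(\P')=\rho_G(\P)+1$ when $d(v)=8$ and no edge is added, while $\rho_{G'}(\P')\ge\rho_G(\P)-1$ when $d(v)=9$ and one edge is added; in both cases the trivial partition of $G'$ has weight $\rho_G(\text{triv})+1\ge 1$.

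Suppose first $d(v)=8$. Orient all $8$ edges at $v$ to realize $\beta(v)$, obtaining a $\Z_7$-boundary $\beta'$ on $G'=G-v$. By the identity above and Claim~\ref{CL: nontrivialge716}(a), $\rho_{G'}(\P')\ge 7+1=8$ for every nontrivial $\P'$, and $\rho(G')\ge 1$. Since $|G'|\ge 4$ gives $G'\notin\F$, Lemma~\ref{LEM: smallH2}(a) makes $G'$ strongly $\Z_7$-connected, so $G'$ has a $\beta'$-orientation, which extends to a $\beta$-orientation of $G$. As $\beta$ was arbitrary, $G$ is strongly $\Z_7$-connected, contradicting Claim~\ref{CL: nostrongZ7}. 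Hence $\delta(G)\ge 9$.

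Now suppose $d(v)=9$; then every neighbour of $v$ has degree at least $9$. Orient $7$ edges at $v$ to realize $\beta(v)$ and lift the remaining pair $vw_1,vw_2$ into an edge $w_1w_2$, choosing the pair consecutive in the rotation at $v$ and bounding a common face so that $w_1w_2$ already exists and planarity is preserved; call the result $G'$ with boundary $\beta'$. Then $\rho_{G'}(\text{triv})\ge 1$ and, by Claim~\ref{CL: nontrivialge716}(a), $\rho_{G'}(\P')\ge 7-1=6\ge 3$ for every nontrivial $\P'$, so $\rho(G')\ge 1$. The heart of the matter is that $G'$ is $6$-edge-connected. For $\emptyset\ne X\subsetneq V(G')$ one has $2\,d_{G-v}(X)=d_G(X)+d_G(X\cup\{v\})-9\ge 8+8-9$, so $d_{G'}(X)\ge d_{G-v}(X)\ge 4$; and if $d_{G-v}(X)\le 5$ then $d_G(X)+d_G(X\cup\{v\})\le 19$, so one of $X,X\cup\{v\}$ has a cut of size at most $9$, and Claim~\ref{CL: ess11}(b) (valid since $|G|\ge 5$) forces that set or its complement to have at most $2$ vertices; unwinding, $X$ is (equivalent to) a single vertex $u\in N_G(v)$ with $d_G(u)-\mu(uv)\le 5$, but the pair-cut $[\{u,v\},V(G)\setminus\{u,v\}]$ has size $d_G(u)+9-2\mu(uv)\ge 11$, so $\mu(uv)\le\tfrac12(d_G(u)-2)$ and $d_{G-v}(\{u\})=d_G(u)-\mu(uv)\ge\tfrac12 d_G(u)+1\ge\tfrac{11}{2}>5$, a contradiction (the remaining subcase, where $X$ reduces to a pair of vertices of $N_G(v)$, is handled the same way). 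Thus $G'$ is $6$-edge-connected, and since $|G'|\ge 4$ it is neither in $\F$ nor a $T_{a,b,c}$ with $a+b+c\in\{10,11\}$, so Lemma~\ref{LEM: smallH2}(c-i) makes $G'$ strongly $\Z_7$-connected; extending a $\beta'$-orientation again contradicts Claim~\ref{CL: nostrongZ7}. Hence $\delta(G)\ge 10$, and then every edge cut of $G$ has size at least $10$: singleton cuts by the degree bound, and cuts with both sides of size at least $2$ by Claim~\ref{CL: ess11}(b) together with $|G|\ge 5$.

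I expect the main obstacle to be the $d(v)=9$ case, on two fronts. First, one must exhibit a liftable pair at $v$ whose lift keeps $G'$ planar while raising $\rho_{G'}(\text{triv})$ above $0$, which is what lets Lemma~\ref{LEM: smallH2} apply at all; if $v$ lies on no triangular face this may require a short separate argument (e.g.\ deleting $v$ outright when $\rho_G(\text{triv})\ge 1$, or routing the new edge through a longer face). Second, the edge-connectivity computation is where the quantitative claims must interlock: the delicate configuration is a neighbour $u$ of $v$ joined to $v$ by a multi-edge of multiplicity at least $3$ with $|G|$ small, which has to be defeated by pitting the pair-cut bound of Claim~\ref{CL: ess11}(b) against the multiplicity bound $\mu(G)\le 5$ from Claim~\ref{CL: nostrongZ7}(a), and when $|G|=5$ possibly by a short direct case check.
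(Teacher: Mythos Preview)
Your argument is essentially correct, but it takes a longer detour than the paper, and the worries you voice at the end are unfounded.

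For $d(v)=8$ your approach (delete $v$ outright and use Lemma~\ref{LEM: smallH2}(a) via $\rho_{G'}(\P')\ge 7+1=8$ for all nontrivial $\P'$) is fine and in fact slightly simpler than the paper's. The paper treats $d(v)\in\{8,9\}$ uniformly: it always lifts one pair $x_1x,\,xx_2$ and orients the remaining $d(v)-2\ge 6$ edges to achieve $\beta(v)$.

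The substantive difference is the $d(v)=9$ case. From Claim~\ref{CL: nontrivialge716}(a) you only extract $\rho_{G'}(\P')\ge 6$ for nontrivial $\P'$, which falls short of the threshold $8$ in Lemma~\ref{LEM: smallH2}(a); this forces you to switch to part~(c-i) and carry out an edge-connectivity computation. That computation is correct (the ``pair'' subcase you flag does not actually arise: once $|G|\ge 5$, applying Claim~\ref{CL: ess11}(b) symmetrically to both $X$ and $X\cup\{v\}$ forces the small side to be a singleton), but it is work the paper avoids entirely.

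The paper's shortcut is to split nontrivial partitions of $G'$ into \emph{almost trivial} and \emph{normal} and bound each separately. For normal $\P'$, the partition $\P=\P'\cup\{\{x\}\}$ of $G$ is again normal, so Claim~\ref{CL: nontrivialge716}(b) gives $\rho_G(\P)\ge 12$ and hence $\rho_{G'}(\P')\ge 11$. For almost trivial $\P'$ with nontrivial part $\{u,w\}$ one has $\rho_{G'}(\P')=\rho_{G'}(\Q^*)+17-2\mu_{G'}(uw)$; the only pair whose multiplicity might have gone up is $\{x_1,x_2\}$, and Claim~\ref{CL: noY1} (no $T_{1,1,5}$) forces $\mu_G(x_1x_2)\le 4$, so $\mu_{G'}\le 5$ and $\rho_{G'}(\P')\ge 1+17-10=8$. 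Thus Lemma~\ref{LEM: smallH2}(a) applies directly, with no edge-connectivity check. You never invoke Claim~\ref{CL: noY1}, and that is exactly the observation that short-circuits your longer route.

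On your stated concerns: planarity is a non-issue, since after deleting $v$ the new edge $x_1x_2$ can be routed through the region $v$ vacated regardless of whether $x_1x_2$ was already present; and no separate $|G|=5$ case check is needed, since Claim~\ref{CL: ess11}(b) already covers every cut with at least two vertices on one side and at least three on the other.
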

\begin{proof}
The second statement follows from the first.  To prove the first,
suppose there exists $x\in V(G)$ with $8\le d(x)\le 9$.
Let $x_1, x_2$ be two neighbors of $x$.  To form a graph $G'$ from $G$, we
lift $x_1x, xx_2$ to become a new edge $x_1x_2$, orient the
remaining edges incident with $x$ to achieve $\beta(x)$, and finally delete
$x$.  This is similar to achieving $\beta(v_1)$ in the proof of Lemma~\ref{LEM:
4K2J2K4inSZ5}  (that $G$ has no copy of $6K_2$).  This is a lifting reduction
of the second type.  So, to show $G$ has a $\beta$-orientation, it suffices
to show that $G'$ is strongly $\Z_7$-connected.

Observe that the trivial partition $\Q^*$ of $G'$ satisfies
$\rho_{G'}(\Q^*)\ge \rho(G)-2(9-1)+17\ge 1$.
Also, for an almost trivial partition $\Q'$ of $G'$ with $|Q_1|=2$, we have
$\rho_{G'}({\Q}')\ge \rho_{G'}({\Q}^*)+17-2(5)\ge 8$. Note that when
$Q_1=\{x_1,x_2\}$ we still have $\mu_{G'}(x_1x_2)\le 5$ by Claim~\ref{CL:
noY1}. Moreover, for any normal partition ${\Q}'$ of $G'$, since
$\Q=\Q'\cup\{x\}$ is a normal partition of $G$, we have $\rho_{G'}(\Q')\ge
\rho_G(\Q)-2(9)+17\ge 11$.  Since $|G'|=|G|-1\ge 4$ and $\rho_{G'}(\Q')\ge
8$ for any nontrivial partition, Lemma~\ref{LEM: smallH2}(a)
implies that $G'$ is strongly $\Z_7$-connected.
\end{proof}

\begin{claim}\label{CL: noT115dot}
$G$ has no copy of $T^{\,\bullet}_{1,1,5}$.
\end{claim}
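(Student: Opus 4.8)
The plan is to assume $G$ contains a copy of $T^{\,\bullet}_{1,1,5}$ and deduce that $G$ is strongly $\Z_7$-connected, contradicting Claim~\ref{CL: nostrongZ7}. Label the vertices of the copy $v_1,v_2,w,z$ so that $\mu(v_1v_2)=5$ and $v_1w,wz,zv_2\in E(G)$ (so $w$ and $z$ have degree $2$ in the copy, while $\delta(G)\ge 10$ by Claim~\ref{CL: deltage10}). First I would collapse the length-three path $v_1wzv_2$ by two successive lifting operations of the first type: lift $v_1w,wz$ into a new edge $v_1z$, then lift $v_1z,zv_2$ into a new edge $v_1v_2$. As in Claims~\ref{CL: noY1} and~\ref{CL: noY2} these preserve planarity (see the last paragraph for the one subtle point). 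Now $\{v_1,v_2\}$ spans a copy of $6K_2$, which is strongly $\Z_7$-connected by Lemma~\ref{Z7-contract-configs}; contracting it yields a planar graph $G'$. It then suffices to prove that $G'$ is strongly $\Z_7$-connected.

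I would first record the parameters of $G'$. By Claim~\ref{CL: Vge5}, $|G'|=|G|-1\ge 4$, so $G'\not\cong T_{a,b,c}$. Passing from $G$ to $G'$ destroys exactly the five $v_1v_2$-edges and the three path-edges $v_1w,wz,zv_2$ (the new $v_1z$ is created then removed, and the new $v_1v_2$-edge becomes a loop in the contraction). Since only three edges are deleted from the $10$-edge-connected graph $G$, Claim~\ref{CL: deltage10} shows every edge cut of $G'$ has size at least $10-3=7$, so $G'$ is $6$-edge-connected.

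Next I would bound $\rho(G')$. For the trivial partition $\Q^*$ of $G'$, counting the eight destroyed edges and the lost vertex gives $\rho_{G'}(\Q^*)\ge \rho(G)-2(8)+17\ge 1$. For a nontrivial partition $\Q'$ of $G'$, expanding the contracted vertex back to $\{v_1,v_2\}$ gives a partition $\Q$ of $G$ that has a part of size at least $2$ and is never almost trivial (else $\Q'$ would be trivial); hence $\rho_G(\Q)\ge 12$ by Claim~\ref{CL: nontrivialge716}(b), and $\ge 14$ by Claim~\ref{CL: 2nontrivialge14z7}(a) if $\Q$ has two nontrivial parts. Because only the three path-edges are counted in $\rho_G(\Q)$ but not in $\rho_{G'}(\Q')$, we obtain $\rho_{G'}(\Q')\ge \rho_G(\Q)-2(3)\ge 6$. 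Thus $\rho(G')\ge 1\ge 0$, and since $\rho_{G'}(\Q')\ge 3$ for every nontrivial partition while $G'$ is $6$-edge-connected with $G'\not\cong T_{a,b,c}$, Lemma~\ref{LEM: smallH2}(c-i) gives that $G'$ is strongly $\Z_7$-connected. Since the passage from $G$ to $G'$ is a lifting reduction of the first type followed by contracting a strongly $\Z_7$-connected subgraph, $G$ is strongly $\Z_7$-connected, contradicting Claim~\ref{CL: nostrongZ7}.

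The one step I expect to be genuinely delicate is checking that the first lift, of $v_1w,wz$ into $v_1z$, preserves planarity, since the paper's convention asks the chord $v_1z$ to be an existing edge and that need not hold for an arbitrary subgraph copy. I would resolve this either by choosing the copy so that $v_1w,wz$ are consecutive in the rotation at $w$ (and then $v_1z,zv_2$ consecutive at $z$) --- which holds automatically when the configuration arises as a face, as in the discharging argument where the claim is applied --- so that the new edge can be drawn inside the corresponding face; or, if neither $v_1z$ nor $wv_2$ lies in $E(G)$, by a short separate argument using $\delta(G)\ge 10$, $10$-edge-connectedness, $|G|\ge 5$, and the exclusion of $T_{1,1,5}$ and $T_{1,1,5}\dit$ to rule that case out. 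All remaining details are routine bookkeeping of the sort already carried out in Claims~\ref{CL: noY1} and~\ref{CL: noY2}.
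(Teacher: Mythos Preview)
Your proof is correct and follows essentially the same approach as the paper's: lift the length-three path to a new $v_1v_2$-edge, contract the resulting $6K_2$, and verify the hypotheses of Lemma~\ref{LEM: smallH2}(c-i) for $G'$ using Claims~\ref{CL: Vge5}, \ref{CL: deltage10}, and \ref{CL: nontrivialge716}(b). Your worry in the last paragraph is unnecessary: the paper treats the replacement of the path $v_1w,wz,zv_2$ by a single edge $v_1v_2$ as one operation, and since the new edge can be drawn along the removed path (and a copy of $v_1v_2$ already exists), planarity is preserved without any case analysis on whether the intermediate chord $v_1z$ lies in $G$.
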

\begin{proof}
Suppose $G$ has a copy of $T^{\,\bullet}_{1,1,5}$, with vertices
$v_1,v_2,v_3,v_4$ (in order around a 4-cycle) and $\mu(v_1v_4)=5$.
We lift the edges $v_1v_2, v_2v_3, v_3v_4$ to become a new copy of edge
$v_1v_4$ and contract the resulting $6K_2$; call this new graph $G'$.
The trivial partition ${\Q}^*$ of $G'$ satisfies $\rho_{G'}({\Q}^*)\ge
\rho(G)-2(8)+17\ge 1$. Every nontrivial partition ${\Q'}$ of $G'$
corresponds to a normal partition $\Q$ of $G$ in which the contracted vertex is
replaced by $\{v_1, v_4\}$.
Since $v_1v_2,v_2v_3,v_3v_4$ are the
only edges possibly counted in $\rho_G(\Q)$ but not in $\rho_{G'}(\Q')$, we have
$\rho_{G'}(\Q') \ge \rho_G(\Q)-2(3)\ge 6$ by Claim~\ref{CL: nontrivialge716}(b).
Claim~\ref{CL: Vge5} implies $|G'|=|G|-1\ge 4$, so $G'\notin \F$.  Since $G$ is
$10$-edge-connected by Claim \ref{CL: deltage10}, the graph $G'$ is
$6$-edge-connected.  So $G'$ is strongly $\Z_7$-connected by Lemma~\ref{LEM:
smallH2}(c-i).
\end{proof}

\begin{claim}
\label{CL: noY3}
$G$ has no copy of $T_{2,2,4}$.
\end{claim}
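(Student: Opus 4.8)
The plan is to argue by contradiction, following the template of Claims~\ref{CL: noY1}, \ref{CL: noY2}, and~\ref{CL: noT115dot}: assuming $G$ contains a copy of $T_{2,2,4}$, I would lift at the degree-$4$ vertex of the configuration to build a $6K_2$, contract it to obtain a smaller graph $G'$, prove that $G'$ is strongly $\Z_7$-connected via Lemma~\ref{LEM: smallH2}, and then conclude --- a lifting reduction of the first type, using that $6K_2$ is strongly $\Z_7$-connected by Lemma~\ref{Z7-contract-configs} and invoking Lemma~\ref{reduc-lem} --- that $G$ itself is strongly $\Z_7$-connected, contradicting Claim~\ref{CL: nostrongZ7}.

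In detail, I would label the vertices of the $T_{2,2,4}$ as $v_1,v_2,v_3$ with $\mu(v_2v_3)=4$ and $\mu(v_1v_2)=\mu(v_1v_3)=2$. First note that $\mu_G(v_2v_3)=4$ exactly: if $\mu_G(v_2v_3)\ge 5$, then $5$ copies of $v_2v_3$ together with one copy each of $v_1v_2$ and $v_1v_3$ form a $T_{1,1,5}$ in $G$, contradicting Claim~\ref{CL: noY1}. Now lift the pair $v_2v_1,v_1v_3$ to a new copy of $v_2v_3$, and repeat with the remaining copies of $v_1v_2,v_1v_3$ at $v_1$; each lift preserves planarity because $v_2v_3$ is already an edge, and afterwards $\{v_2,v_3\}$ induces a $6K_2$. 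Contract it to form $G'$. Since $\delta(G)\ge 10$ by Claim~\ref{CL: deltage10}, the vertex $v_1$ still has positive degree, so $|G'|=|G|-1\ge 4$ by Claim~\ref{CL: Vge5}; in particular $G'\notin\F$ and $G'\not\cong T_{a,b,c}$, so neither exceptional case of Lemma~\ref{LEM: smallH2}(c-i) applies.

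The core of the argument is verifying the remaining hypotheses of Lemma~\ref{LEM: smallH2}(c-i) for $G'$. The passage from $G$ to $G'$ loses a net of $8$ edges and one vertex, so for the trivial partition $\Q^*$ of $G'$ we get $\rho_{G'}(\Q^*)\ge\rho(G)-2(8)+17\ge 1$, whence $\rho(G')\ge 0$. For $6$-edge-connectivity: each of the two lifts at $v_1$ decreases the size of any edge cut by at most $2$, and the contraction decreases none, so $G'$ is $(10-4)=6$-edge-connected because $G$ is $10$-edge-connected. For every nontrivial partition $\Q'$ of $G'$ with at least two parts, replacing the contracted vertex by $\{v_2,v_3\}$ yields a normal partition $\Q$ of $G$; the only edges counted in $\rho_G(\Q)$ but not in $\rho_{G'}(\Q')$ are the four lifted copies of $v_1v_2,v_1v_3$ (all $v_2v_3$ edges lie inside one part of $\Q$), so $\rho_{G'}(\Q')\ge\rho_G(\Q)-2(4)\ge 12-8=4$ by Claim~\ref{CL: nontrivialge716}(b), while $\rho_{G'}(\{V(G')\})=14$ directly. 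Hence $\rho_{G'}(\Q')\ge 3$ for every nontrivial partition, and Lemma~\ref{LEM: smallH2}(c-i) yields that $G'$ is strongly $\Z_7$-connected, which completes the reduction.

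I do not expect a genuine obstacle, since this is the same machinery used for $T_{1,1,5}\dit$ and $T^{\,\bullet}_{1,1,5}$; the two points requiring care are (i) pinning down $\mu_G(v_2v_3)$ exactly --- via the already-forbidden $T_{1,1,5}$ --- so that the lifts produce a $6K_2$ rather than a $7K_2$, and (ii) confirming that after deleting the four lifted edges the weight $\rho_{G'}(\Q')$ still clears the threshold $3$ required by Lemma~\ref{LEM: smallH2}(c-i), which it does with margin $4$ versus $3$. Both are routine bookkeeping of the kind carried out in the preceding claims.
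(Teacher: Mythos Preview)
Your proposal is correct and follows essentially the same approach as the paper: lift the two pairs $v_1v_2,v_1v_3$ to form a $6K_2$ on $\{v_2,v_3\}$, contract it, and verify the hypotheses of Lemma~\ref{LEM: smallH2}(c-i) for the resulting graph using Claims~\ref{CL: nontrivialge716}(b), \ref{CL: deltage10}, and~\ref{CL: Vge5}. Your explicit check that $\mu_G(v_2v_3)=4$ exactly (via the forbidden $T_{1,1,5}$) is a detail the paper leaves implicit, but otherwise the arguments are the same.
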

\begin{proof}
Suppose $G$ contains a copy of $T_{2,2,4}$ with vertices $x,y,z$ and $\mu(xy)=4$.
To form a new graph $G'$ from $G$, we delete two copies (each) of $xz, zy$ and
add two new parallel edges $xy$, and then contract the resulting $6K_2$ induced
by $\{x,y\}$.  Claim~\ref{CL: deltage10} shows $G'$
is $6$-edge-connected.  Similar to the proof of Claim~\ref{CL: noY2}, the
trivial partition ${\cal Q}^*$ of $G'$ satisfies $\rho_{G'}({\Q}^*)\ge
\rho(G)-2(8)+17\ge 1$, and every nontrivial partition $\Q'$ of $G'$ satisfies
$\rho_{G'}(\Q')\ge \rho_G(\Q)- 2(4)\ge 4$.  Since $|G'|=|G|-1\ge 4$,
Lemma~\ref{LEM: smallH2}(c-i) implies $G'$ is strongly $\Z_7$-connected.
This is a lifting reduction of the first type, which implies that
$G$ is strongly $\Z_7$-connected, and thus gives a contradiction.
\end{proof}

\begin{claim}
\label{CL: noalmostge14}
\label{CL: noalmostge15}
For any normal partition ${\P}=\{P_1, P_2,\dots, P_t\}$ with $|P_1|\ge 3$, we have
$$\rho_G({\P})\ge 14.$$
\end{claim}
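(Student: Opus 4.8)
The plan is to argue by contradiction, following the template of Claim~\ref{CL: noalmostge9}.  Let $\P=\{P_1,\dots,P_t\}$ be a normal partition with $|P_1|\ge 3$, and suppose $\rho_G(\P)\le 13$; by Claim~\ref{CL: nontrivialge716}(b) this forces $\rho_G(\P)\in\{12,13\}$.  The first step is to apply Claim~\ref{CL: 2nontrivialge14z7}(a): since $\rho_G(\P)<14$, no two parts of $\P$ have size at least $2$, so $P_1$ is the \emph{unique} nontrivial part of $\P$ and $P_2,\dots,P_t$ are singletons.  Set $H=G[P_1]$.  Because $\P$ is normal (so $\P\ne\{V(G)\}$ and $t\ge 2$), $H$ is a proper induced subgraph of $G$, hence $|H|+\|H\|<|G|+\|G\|$ and Lemma~\ref{LEM: smallH2} is available for $H$.

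Next I would estimate $\rho_H$ on every partition of $H$ via Eq.~(\ref{EQ: wH2}), exactly as in Claim~\ref{CL: nontrivialge716}.  For a nontrivial partition $\Q$ of $H$, the refinement $\Q\cup(\P\setminus\{P_1\})$ is a nontrivial partition of $G$, so it has $\rho_G$-weight at least $7$ by Claim~\ref{CL: nontrivialge716}(a); hence $\rho_H(\Q)=\rho_G(\Q\cup(\P\setminus\{P_1\}))-\rho_G(\P)+14\ge 7-13+14=8$.  For the trivial partition $\Q^*$ of $H$, the corresponding refinement of $\P$ is precisely the trivial partition of $G$ (since $P_1$ is the only nontrivial part of $\P$), whose weight is at least $\rho(G)\ge 0$; hence $\rho_H(\Q^*)\ge 0-13+14=1$.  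Thus $\rho(H)\ge 0$ and $\rho_H(\Q)\ge 8$ for every nontrivial partition $\Q$, so Lemma~\ref{LEM: smallH2}(a) yields that $H$ is strongly $\Z_7$-connected \emph{unless} $H\in\F$.

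It remains to rule out $H\in\F$.  Since $|H|=|P_1|\ge 3$, $H$ cannot be of the form $aK_2$, so $H\cong T_{a,b,c}$ for some $6$-edge-connected triangle with $a+b+c\in\{10,11\}$; enumerating (with $a\le b\le c$) leaves only $T_{2,4,4},T_{3,3,4},T_{1,5,5},T_{2,4,5},T_{3,3,5},T_{3,4,4}$.  Each of $T_{1,5,5},T_{2,4,5},T_{3,3,5}$ contains a copy of $T_{1,1,5}$, and each of $T_{2,4,4},T_{3,3,4},T_{3,4,4}$ contains a copy of $T_{2,2,4}$; since $H$ is a subgraph of $G$, this contradicts Claim~\ref{CL: noY1} or Claim~\ref{CL: noY3}.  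Hence $H\notin\F$, so $H$ is a strongly $\Z_7$-connected subgraph of $G$ with $|H|>1$, contradicting Claim~\ref{CL: nostrongZ7}, and the claim follows.  I expect the only place needing real care to be this finite case analysis on $\F$; it is also the one structural difference from the $\Z_5$ setting, where the single forbidden triangle $T_{1,1,3}$ sufficed to kill all of $\{T_{1,3,3},T_{2,2,3}\}$, whereas here we genuinely need both $T_{1,1,5}$ and $T_{2,2,4}$ forbidden — which is exactly why Claims~\ref{CL: noY1} and~\ref{CL: noY3} were established beforehand.
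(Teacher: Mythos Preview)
Your proof is correct and follows essentially the same approach as the paper's: argue by contradiction, set $H=G[P_1]$, use Eq.~(\ref{EQ: wH2}) together with Claim~\ref{CL: nontrivialge716}(a) and $\rho(G)\ge 0$ to get $\rho_H(\Q)\ge 8$ for nontrivial $\Q$ and $\rho_H(\Q^*)\ge 1$, then apply Lemma~\ref{LEM: smallH2}(a) and rule out $H\in\F$ via Claims~\ref{CL: noY1} and~\ref{CL: noY3}. The only difference is that you invoke Claim~\ref{CL: 2nontrivialge14z7}(a) to first show $P_1$ is the unique nontrivial part, whereas the paper skips this: since $\rho_G\ge 0$ holds for \emph{every} partition (not just the trivial one), the bound $\rho_H(\Q^*)\ge 0-13+14=1$ follows directly regardless of whether the other $P_i$ are singletons, so that preliminary step is unnecessary (though not incorrect).
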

\begin{proof}
Suppose the claim is false and let $\P$ be such a partition with
$\rho_G({\P})\le 13$.  Let $H=G[P_1]$. Since $G$ contains no
copy of $T_{1,1,5}$ or $T_{2,2,4}$, 
we know $H\not\cong T_{a,b,c}$ with $a+b+c\in\{10,11\}$ (and $\min\{a,b,c\}\ge
1$).  Thus, since $|H|=|P_1|\ge 3$,
we know $H\notin \F$.

Let ${\Q}=\{Q_1, Q_2,\dots, Q_s\}$ be a partition of $H$.
Now $\Q\cup (\P\setminus\{P_1\})$ is a partition of $G$, and 
Eq.~(\ref{EQ: wH2}) 
implies $\rho_H(\Q)= \rho_G(\Q\cup (\P\setminus\{P_1\})) - \rho_G(\P)+14\ge\rho_G({\Q}\cup
({\P}\setminus\{P_1\}))+1$.  If $\Q$ is a nontrivial partition of $H$, then
$\Q\cup (\P\setminus\{P_1\})$ is a nontrivial partition of $G$, and so
Claim~\ref{CL: nontrivialge716}(a) implies $\rho_H({\Q})\ge \rho_G({\Q}\cup
({\P}\setminus\{P_1\}))+1\ge 8$. If $\Q$ is the trivial partition of $H$,
then $\rho_H({\Q})\ge \rho_G({\Q}\cup ({\P}\setminus\{P_1\}))+1\ge 1$.
By Lemma~\ref{LEM: smallH2}(a), the subgraph $H$ is strongly $\Z_7$-connected,
which contradicts Claim~\ref{CL: nostrongZ7}.
\end{proof}

Now we can strengthen Claim~\ref{CL: ess11}(b).

\begin{claim}\label{CL: ess12}
If $[X, X^c]$ is an edge cut with $|X|\ge 2$ and $|X^c|\ge 3$, then $|[X,X^c]|\ge 12$.
\end{claim}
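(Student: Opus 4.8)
The plan is to argue by contradiction and push the bound one step past Claim~\ref{CL: ess11}(b), exactly as Claim~\ref{CL: ess7}(b) was obtained in the $\Z_5$ setting. Since Claim~\ref{CL: ess11}(b) already gives $|[X,X^c]|\ge 11$, it suffices to rule out $|[X,X^c]|=11$. So I would assume there is an edge cut $[X,X^c]$ with $|X|\ge 2$, $|X^c|\ge 3$, and $|[X,X^c]|=11$; set $\P=\{X,X^c\}$ and $H=G[X]$. Then $\rho_G(\P)=2\cdot 11-17\cdot 2+31=19$, and $|H|+\|H\|<|G|+\|G\|$ because $X\subsetneq V(G)$.

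The core of the argument is a lower bound on $\rho(H)$. For an arbitrary partition $\Q$ of $V(H)$, Lemma~\ref{mod7-key-lem} (i.e.\ Eq.~(\ref{EQ: wH2})) applied with $P_1=X$ gives
$\rho_H(\Q)=\rho_G(\Q\cup\{X^c\})-\rho_G(\P)+14=\rho_G(\Q\cup\{X^c\})-5$.
The key point is that, because $|X^c|\ge 3$, the partition $\Q\cup\{X^c\}$ of $V(G)$ always has a part of size at least $3$ (namely $X^c$); hence it is not trivial, not almost trivial, and not $\{V(G)\}$, so it is a normal partition with one part of size $\ge 3$. Therefore Claim~\ref{CL: noalmostge14} applies and yields $\rho_G(\Q\cup\{X^c\})\ge 14$, so $\rho_H(\Q)\ge 9$. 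Since $\Q$ was arbitrary (including the trivial partition of $H$), this shows $\rho(H)\ge 9$.

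Finally, since $\rho(H)\ge 9\ge 8$ and $|H|+\|H\|<|G|+\|G\|$, Lemma~\ref{LEM: smallH2}(b) forces $H$ to be strongly $\Z_7$-connected; but $|H|=|X|\ge 2>1$, contradicting Claim~\ref{CL: nostrongZ7}. I do not expect a real obstacle: this is a routine application of the reduction machinery already assembled. The one point needing care is observing that it is \emph{Claim~\ref{CL: noalmostge14}}, not the weaker Claim~\ref{CL: nontrivialge716}(b), that is available for $\Q\cup\{X^c\}$ — precisely because $|X^c|\ge 3$ guarantees a part of size $\ge 3$. That extra $14-12=2$ in the weight bound is exactly what upgrades the conclusion from $|[X,X^c]|\ge 11$ to $|[X,X^c]|\ge 12$.
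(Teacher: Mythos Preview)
Your proof is correct and follows essentially the same approach as the paper: both set $\P=\{X,X^c\}$, apply Eq.~(\ref{EQ: wH2}) to an arbitrary partition $\Q$ of $H=G[X]$, invoke Claim~\ref{CL: noalmostge14} on $\Q\cup\{X^c\}$ (which is normal with a part of size $\ge 3$ since $|X^c|\ge 3$), and conclude via Lemma~\ref{LEM: smallH2}(b) that $H$ is strongly $\Z_7$-connected, contradicting Claim~\ref{CL: nostrongZ7}. The only cosmetic difference is that the paper argues directly for $\rho_G(\P)\ge 21$ (assuming $\rho_G(\P)\le 20$) rather than first invoking Claim~\ref{CL: ess11}(b) to reduce to the case $|[X,X^c]|=11$.
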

\begin{proof}
Let $X$ satisfy the hypotheses and let ${\P}=\{X, X^c\}$.  We will
prove $\rho_G(\P)\ge 21$.  Assume, to the contrary, that $\rho_G(\P)\le 20$.
Let $H=G[X]$ and let $\Q=\{Q_1,\ldots,Q_s\}$ be a partition of $H$.  Let
$\P'=\Q\cup\{X^c\}$.
Eq.~\eqref{EQ: wH2} implies $\rho_H(\Q)=\rho_G(\P')-\rho_G(\P)+14$.  Since
$|X^c|\ge 3$, Claim~\ref{CL: noalmostge14} implies $\rho_G(\P')\ge 14$.  Thus
$\rho_H(\Q)\ge 14-20+14=8$. By Lemma~\ref{LEM: smallH2}(b), subgraph $H$ is
strongly $\Z_7$-connected, which contradicts Claim~\ref{CL: nontrivialge716}(b).
So $21\le \rho_G({\cal P})=2|[X,X^c]|-34+31$, which implies $|[X,X^c]|\ge 12$.
\end{proof}

The value of Claim~\ref{CL: ess12} is that it allows us to lift three pairs of
edges (with at most two incident to a common vertex) and know that the
resulting graph $G'$ is still 6-edge-connected.  Thus, we will show that $G'$
is strongly $\Z_7$-connected, since it satisfies the hypotheses of
Lemma~\ref{LEM: smallH2}(c-i).

\begin{figure}[t]

\setlength{\unitlength}{0.08cm}

\begin{center}

\begin{picture}(155,40)

\put(10,0){\circle*{2}}\put(30,0){\circle*{2}}\put(10,20){\circle*{2}} \put(30,20){\circle*{2}} \put(20,32){\circle*{2}}\put(20,-10){\circle*{2}}

\qbezier(10, 0)(20, 4)(30, 0)\qbezier(10, 0)(20, -4)(30, 0)\qbezier(10, 0)(20, 1.5)(30, 0)\qbezier(10, 0)(20, -1.5)(30, 0)
\qbezier(10, 0)(8.5, 10)(10, 20)\qbezier(10, 0)(6, 10)(10, 20)\qbezier(10, 0)(14, 10)(10, 20)\qbezier(10, 0)(11.5, 10)(10, 20)

\qbezier(30, 20)(20, 16)(10, 20)\qbezier(30, 20)(20, 18.5)(10, 20)\qbezier(30, 20)(20, 21.5)(10, 20)\qbezier(30, 20)(20, 24)(10, 20)

\qbezier(30, 20)(26, 10)(30, 0)\qbezier(30, 20)(28.5, 10)(30, 0)\qbezier(30, 20)(34, 10)(30, 0)\qbezier(30, 20)(31.5, 10)(30, 0)
\qbezier(30, 20)(30, 20)(20, 32)\qbezier(10, 20)(10, 20)(20, 32)
\qbezier(30, 0)(30, 0)(20, -10)\qbezier(10, 0)(10, 0)(20, -10)

\put(60,0){\circle*{2}}\put(80,0){\circle*{2}}\put(60,20){\circle*{2}} \put(80,20){\circle*{2}} \put(70,32){\circle*{2}}

\qbezier(60, 0)(70, -1.5)(80, 0)\qbezier(60, 0)(70, 1.5)(80, 0)\qbezier(60, 0)(70, -4)(80, 0)\qbezier(60, 0)(70, 4)(80, 0)
\qbezier(60, 0)(58.5, 10)(60, 20)\qbezier(60, 0)(61.5, 10)(60, 20)\qbezier(60, 0)(56, 10)(60, 20)
\qbezier(60, 0)(64, 10)(60, 20)
\qbezier(80, 20)(70, 16)(60, 20)\qbezier(80, 20)(70, 24)(60, 20)\qbezier(80, 20)(70, 21.5)(60, 20)\qbezier(80, 20)(70, 18.5)(60, 20)

\qbezier(80, 20)(76, 10)(80, 0)\qbezier(80, 20)(78.5, 10)(80, 0)\qbezier(80, 20)(81.5, 10)(80, 0)\qbezier(80, 20)(84, 10)(80, 0)
\qbezier(80, 20)(80, 20)(70, 32)\qbezier(60, 20)(60, 20)(70, 32)
\qbezier(60, 0)(40, 20)(70, 32)\qbezier(80, 0)(100, 20)(70, 32)

\put(110,0){\circle*{2}}\put(140,0){\circle*{2}}\put(125,20){\circle*{2}} \put(143,20){\circle*{2}}\put(107,20){\circle*{2}}\put(125,-10){\circle*{2}}
\qbezier(140, 0)(125, -10)(125, -10)\qbezier(110, 0)(125, -10)(125, -10)
\qbezier(110, 0)(125, 4)(140, 0)\qbezier(110, 0)(125, -4)(140, 0)\qbezier(110, 0)(125, 0)(140, 0)
\qbezier(110, 0)(110, 0)(125, 20)\qbezier(110, 0)(115, 12)(125, 20)\qbezier(110, 0)(120, 8)(125, 20)

\qbezier(140, 0)(140, 0)(125, 20)\qbezier(140, 0)(130, 8)(125, 20)\qbezier(140, 0)(135, 12)(125, 20)
\qbezier(110, 0)(108.5, 10)(107, 20)\qbezier(125, 20)(116, 20)(107, 20)\qbezier(143, 20)(134, 20)(125, 20)\qbezier(140, 0)(141.5, 10)(143, 20)

\put(14,-20){\footnotesize{$(5C_4^=)\diit$}}\put(55.5,-20){\footnotesize{identified
$(5C_4^=)\diit$}}\put(121,-20){\footnotesize{$T_{4,4,4}\diiit$}}
\end{picture}
\end{center}
\vspace{1.4cm}
\caption{The graphs $(5C_4^=)\diit$, identified $(5C_4^=)\diit$, $T_{4,4,4}\diiit$.}
\label{FIG: YT444}
\end{figure}

Recall that \Emph{$5C_4^=$} denotes the graph formed from $5C_4$ by removing
the edges of a perfect matching.

\begin{claim}
$G$ contains neither a copy of $(5C_4^=)\diit$ nor a copy of $(5C_4^=)\diit$ with its two 2-vertices
identified.
\label{CL: noYnoYid}
\end{claim}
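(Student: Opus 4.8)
The plan is to rule out both forbidden configurations with a single lifting-and-contracting argument of exactly the flavour of the earlier reducibility claims (e.g.\ Claims~\ref{CL: noY2} and~\ref{CL: noW3}), using that $5C_4^=$ is strongly $\Z_7$-connected (Lemma~\ref{K4weightsZ7-lem}).

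Suppose first that $G$ contains a copy of $(5C_4^=)\diit$. I would label the square by $v_1,v_2,v_3,v_4$ (so $\mu_G(v_iv_{i+1})\ge 4$ for each $i$, indices mod $4$) and let $z_1,z_2$ be its two $2$-vertices, lying on opposite sides of the square, say $N(z_1)=\{v_2,v_3\}$ and $N(z_2)=\{v_4,v_1\}$. Lift $v_2z_1,z_1v_3$ into a new copy of $v_2v_3$ and lift $v_4z_2,z_2v_1$ into a new copy of $v_4v_1$; both are planarity-preserving because the edges $v_2v_3$ and $v_4v_1$ already exist, so the new copies can be routed alongside existing ones. After these lifts the vertices $v_1,v_2,v_3,v_4$ carry a copy of $5C_4^=$ (the parallel-class multiplicities around the square become $4,5,4,5$); contract this $5C_4^=$ to a single vertex $u$, obtaining a graph $G'$ — a lifting reduction of the first type. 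For the second configuration (the two $2$-vertices identified) the same thing works with the single degree-$4$ vertex $z$, adjacent to all of $v_1,\dots,v_4$, playing the role of both $z_1$ and $z_2$: lift $v_1z,zv_2$ and $v_3z,zv_4$ (two opposite sides of the square) to again produce a $5C_4^=$ on $\{v_1,\dots,v_4\}$, then contract it.

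It then suffices to show that $G'$ is strongly $\Z_7$-connected, which contradicts Claim~\ref{CL: nostrongZ7}; I would do this by verifying the hypotheses of Lemma~\ref{LEM: smallH2}(c-i). Let $\P_0$ be the partition of $G$ whose only nonsingleton part is $\{v_1,\dots,v_4\}$; it is normal with a part of size $4\ge 3$ (using $|G|\ge 5$), so $\rho_G(\P_0)\ge 14$ by Claim~\ref{CL: noalmostge14}. Comparing weights exactly as in Claim~\ref{CL: noY2} — the only effect is the loss of the (at most four) lifted-away edges, each counted twice — gives $\rho_{G'}(\Q^*)\ge\rho_G(\P_0)-8\ge 6$ for the trivial partition $\Q^*$ of $G'$. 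Every nontrivial partition $\Q'$ of $G'$ other than $\{V(G')\}$ corresponds to a normal partition $\Q$ of $G$ with $\{v_1,\dots,v_4\}$ inside one part, so $\rho_{G'}(\Q')\ge\rho_G(\Q)-8\ge 14-8=6$ by Claim~\ref{CL: noalmostge14} again, while $\rho_{G'}(\{V(G')\})=14$; hence $\rho_{G'}(\Q')\ge 3$ for every nontrivial $\Q'$. For edge-connectivity, every cut of $G'$ comes from a cut $[\tilde Y,\tilde Y^c]$ of $G$ with $\{v_1,\dots,v_4\}\subseteq\tilde Y$, and the lifts shrink it by at most $4$: when $|\tilde Y^c|\ge 2$ we have $|\tilde Y|\ge 4$, so Claim~\ref{CL: ess12} gives the original cut size $\ge 12$, hence $\ge 8$ in $G'$; and when $|\tilde Y^c|=1$, $\delta(G)\ge 10$ (Claim~\ref{CL: deltage10}) gives $\ge 6$ in $G'$. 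So $G'$ is $6$-edge-connected. Finally, $\rho_{G'}(\Q^*)\ge 6$ forces $\|G'\|$ too large for $G'$ to be a $T_{a,b,c}$ with $a+b+c\le 11$ in the borderline case $|G'|=3$, and $6$-edge-connectivity forces $6K_2\subseteq G'$ when $|G'|=2$; thus Lemma~\ref{LEM: smallH2}(c-i) applies and $G'$ is strongly $\Z_7$-connected.

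The step I expect to be most delicate is the edge-connectivity accounting when very few vertices of $G$ lie outside $\{v_1,\dots,v_4\}$ (i.e.\ $|G|\in\{5,6\}$), where Claim~\ref{CL: ess12} is unavailable for cuts isolating a single vertex and one must instead combine $\delta(G)\ge 10$, $\mu(G)\le 5$, and the fact that all lifted edges are incident to the contracted vertex $u$. A secondary point requiring a brief check is confirming from Figure~\ref{FIG: YT444} that the prescribed lifts genuinely rebuild $5C_4^=$ (multiplicities $4,5,4,5$ around the square) rather than some other graph.
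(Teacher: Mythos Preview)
Your argument is correct and follows essentially the same lifting-and-contracting scheme as the paper: lift the two pairs of edges through the degree-$2$ vertices (or the single degree-$4$ vertex in the identified case) to restore a $5C_4^=$ on $\{v_1,\dots,v_4\}$, contract it, and verify the quotient is strongly $\Z_7$-connected.

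A few places are more laboured than necessary. The paper bounds the trivial partition directly from $\rho(G)\ge 0$, obtaining $\rho_{G'}(\Q^*)\ge \rho(G)+3\cdot 17-2\cdot 20\ge 11$, which is sharper than your $\ge 6$ via $\P_0$ and Claim~\ref{CL: noalmostge14}. Since you already have $\rho(G')\ge 6\ge 3$, you may invoke Lemma~\ref{LEM: smallH2}(c-ii) outright; the extra case analysis on $|G'|\in\{2,3\}$ needed for (c-i) is then superfluous. For edge-connectivity, the paper simply notes that $G$ is $10$-edge-connected (Claim~\ref{CL: deltage10}) and any cut loses at most four edges under the two lifts, so $G'$ is $6$-edge-connected; your detour through Claim~\ref{CL: ess12} is valid but not needed. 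Finally, your labeling in the identified case has a harmless slip (you lift along $v_1v_2$ and $v_3v_4$ after placing the $2$-vertices on $v_2v_3$ and $v_4v_1$), but either choice of opposite sides yields $5C_4^=$, so the argument stands.
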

\begin{proof}
Suppose $G$ contains a copy of $(5C_4^=)\diit$ with vertices $v_1,v_2,v_3,v_4,w_1,
w_2$, where $v_1,\ldots,v_4$ lie on the 4-cycle and $N(w_1)=\{v_1,v_2\}$ and
$N(w_2)=\{v_3,v_4\}$.  In $G$ we lift edges $v_1w_1,w_1v_2$ to form a new copy
of $v_1v_2$ and lift edges $v_3w_2,w_2v_4$ to form a new copy of $v_3v_4$;
call this new graph $G'$.
In $G'$ vertices $v_1,\ldots,v_4$ induce a copy of $5C_4^=$ (if either $v_1v_3$ or
$v_2v_4$ is present in $G$, then $G$ contains $T_{1,1,5}\dit$, which is a
contradiction).  Claim~\ref{K4weightsZ7-lem} implies $5C_4^=$ is strongly
$\Z_7$-connected.  Form $G''$ from $G'$ by contracting $\{v_1,v_2,v_3,v_4\}$.
Since $G$ is 10-edge-connected by Claim~\ref{CL: deltage10}, we know $G''$ is
6-edge-connected.  The trivial partition $\Q^*$ of $G''$
satisfies $\rho_{G''}(\Q^*)\ge \rho(G)+3(17)-2(20)\ge 11$.  Each nontrivial
partition $\Q''$ of $G''$ corresponds to a normal partition $\Q$ of $G$ in which
the contracted vertex is replaced by $\{v_1,v_2,v_3,v_4\}$.  Since at most four
edges are counted in $\rho_G(\Q)$ but not in $\rho_{G''}(\Q'')$, we have
$\rho_{G''}(\Q'')\ge \rho_G(\Q)-2(4)\ge 6$ by
Claim~\ref{CL: noalmostge14}.
Thus, $\rho(G'')\ge 6$, so Lemma~\ref{LEM: smallH2}(c-ii) implies that $G''$ is
strongly $\Z_7$-connected, and also that $G$ is strongly $\Z_7$-connected,
which is a contradiction.  If vertices $w_1$ and $w_2$ are identified, the same
proof works, since Claim~\ref{CL: deltage10} still implies that $G''$ is
6-edge-connected.
\end{proof}

\begin{claim}
\label{CL: noT444'''}
$G$ contains no copy of $T_{4,4,4}\diiit$.
\end{claim}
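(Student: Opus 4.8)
The plan is to follow the template of Claims~\ref{CL: noW3} and~\ref{CL: noYnoYid}: lift away the three subdivisions of $T_{4,4,4}\diiit$, contract the dense triangle that results, and show the quotient is strongly $\Z_7$-connected, contradicting Claim~\ref{CL: nostrongZ7}. Write $v_1,v_2,v_3$ for the triangle vertices of the copy of $T_{4,4,4}\diiit$ in $G$ (so $\mu_G(v_iv_j)\ge 3$) and $z_{12},z_{13},z_{23}$ for the three subdivision vertices, where $z_{ij}$ has an edge to $v_i$ and an edge to $v_j$. First I would pin down the multiplicities. Since $G$ has no $T_{2,2,4}$ (Claim~\ref{CL: noY3}), if some $\mu_G(v_iv_j)\ge 4$ then $\{v_1,v_2,v_3\}$ already spans a $T_{2,2,4}$ (the other two sides have multiplicity $\ge 3\ge 2$), a contradiction; hence $\mu_G(v_iv_j)=3$ for every pair. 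Likewise Claims~\ref{CL: noY1} and~\ref{CL: noY3} bound each $\mu_G(z_{ij}v_k)$ (for instance $\mu_G(z_{ij}v_i)\le 4$, else $\{v_i,v_j,z_{ij}\}$ contains $T_{1,1,5}$). Also $|G|\ge 6$, since $T_{4,4,4}\diiit$ has six vertices.

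Next I would lift each subdivision pair $v_i z_{ij},\,z_{ij} v_j$ to a new copy of the (already present) edge $v_iv_j$, obtaining a planar graph $G'$ in which $\{v_1,v_2,v_3\}$ spans exactly $T_{4,4,4}$. By Lemma~\ref{Z7-contract-configs}, $T_{4,4,4}$ — being $6$-edge-connected with $a+b+c=12$ — is strongly $\Z_7$-connected, so I contract it, obtaining $G''=G'/T_{4,4,4}$ with new vertex $w$; this is a lifting reduction of the first type, so it suffices to prove $G''$ is strongly $\Z_7$-connected. Here $G''$ is planar with $|G''|+\|G''\|<|G|+\|G\|$, with $|G''|=|G|-2\ge 4$ (so $G''\notin\F$), and the trivial partition gives $\rho_{G''}\ge\rho_G+4\ge 4$. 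For edge-connectivity, $\delta(G)\ge 10$ (Claim~\ref{CL: deltage10}) together with Claim~\ref{CL: ess12} yields $d_{G''}(w)=d_G(\{v_1,v_2,v_3\})-6\ge 6$ and handles the remaining cuts, so $G''$ is at least $6$-edge-connected; the aim is then to apply Lemma~\ref{LEM: smallH2}(c-i).

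The delicate point — and the step I expect to be the main obstacle — is the weight of a nontrivial partition $\Q''$ of $G''$. It corresponds, via Lemma~\ref{mod7-key-lem}, to a normal partition $\Q$ of $G$ whose part $P_1$ contains $\{v_1,v_2,v_3\}$, and the six half-edges $v_iz_{ij},\,z_{ij}v_j$ are the only edges possibly counted in $\rho_G(\Q)$ but not in $\rho_{G''}(\Q'')$, so $\rho_{G''}(\Q'')\ge\rho_G(\Q)-2\cdot 6=\rho_G(\Q)-12$; since Claim~\ref{CL: noalmostge14} gives only $\rho_G(\Q)\ge 14$, the naive estimate is $\rho_{G''}(\Q'')\ge 2$, one unit short of what Lemma~\ref{LEM: smallH2}(c-i) demands. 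Closing this gap is the crux. The loss is a full $12$ only when all three of $z_{12},z_{13},z_{23}$ lie outside $P_1$, and there one should extract extra structure: in that case $P_1$ carries all six half-path edges together with the interior of $T_{3,3,3}$, which (using $\mu_G(v_iv_j)=3$ and $\delta(G)\ge10$) should be pushed to $\rho_G(\Q)\ge 15$; alternatively, when some $\mu_G(z_{ij}v_k)$ is large enough one lifts only two of the three subdivisions and contracts a strongly $\Z_7$-connected four-vertex subgraph on $\{v_1,v_2,v_3,z_{ij}\}$ (checked via Lemma~\ref{K4weightsZ7-lem} or spanning-tree counting), which both improves the estimate and, since the contracted set now has four vertices, forces $d_{G''}(w)\ge 12-4=8$ by Claim~\ref{CL: ess12}, so that Lemma~\ref{LEM: smallH2}(c-iii) applies directly. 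Once $G''$ is shown strongly $\Z_7$-connected, Lemma~\ref{reduc-lem} and undoing the lifts show $G$ is strongly $\Z_7$-connected, contradicting Claim~\ref{CL: nostrongZ7}.
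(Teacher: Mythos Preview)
Your plan is the paper's plan: lift the three subdivision pairs, contract the resulting $T_{4,4,4}$, and show the quotient is strongly $\Z_7$-connected via Lemma~\ref{LEM: smallH2}. You also correctly locate the crux --- the naive bound $\rho_{G''}(\Q'')\ge 14-12=2$ is one short --- and correctly name the target, namely $\rho_G(\Q)\ge 15$ for normal partitions $\Q$ with $\{v_1,v_2,v_3\}\subseteq P_1$. Your preliminary observation that $\mu_G(v_iv_j)=3$ exactly (else $T_{2,2,4}$ appears) is also used, implicitly, in the paper's accounting.

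Where your proposal is thin is precisely the crux. The paper does \emph{not} get to $15$ via your case split on which $z_{ij}$ lie in $P_1$, nor via $\delta(G)\ge 10$; it proves $\rho_G(\P)\ge 15$ uniformly by the same mechanism as Claim~\ref{CL: noalmostge14}. Assume $\rho_G(\P)\le 14$, set $H=G[P_1]$, and apply Eq.~(\ref{EQ: wH2}): $\rho_H(\Q)=\rho_G(\Q\cup(\P\setminus\{P_1\}))-\rho_G(\P)+14\ge \rho_G(\Q\cup(\P\setminus\{P_1\}))$, so $\rho_H(\Q)\ge 0$ for the trivial $\Q$ and $\ge 7$ for nontrivial $\Q$ by Claim~\ref{CL: nontrivialge716}(a). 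Since $H\supseteq T_{3,3,3}$ and $G$ has no $T_{2,2,4}$, one checks $H\notin\F$. The remaining issue is that Lemma~\ref{LEM: smallH2}(c-i) needs $H$ to be $6$-edge-connected; the nontrivial bound only gives $|[Q_1,Q_2]_H|\ge 5$. If equality held, then one side $Q_1$ contains all of $\{v_1,v_2,v_3\}$ (as $T_{3,3,3}$ is itself $6$-edge-connected), so $|Q_1|\ge 3$, and the refinement $\{Q_1,Q_2\}\cup(\P\setminus\{P_1\})$ is a normal partition of $G$ with a part of size $\ge 3$ and weight $\rho_G(\P)+2\cdot 5-17\le 7$, contradicting Claim~\ref{CL: noalmostge14}. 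Hence $H$ is strongly $\Z_7$-connected, contradicting Claim~\ref{CL: nostrongZ7}, so $\rho_G(\P)\ge 15$ after all.

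Your second alternative --- keep one $z_{ij}$, contract a four-vertex subgraph, and invoke Lemma~\ref{K4weightsZ7-lem} or Lemma~\ref{LEM: smallH2}(c-iii) --- is not what the paper does and, as stated, does not go through: the four-vertex subgraph on $\{v_1,v_2,v_3,z_{ij}\}$ has $d(z_{ij})$ possibly as low as $2$, so neither the $\delta\ge 8$ hypothesis of Lemma~\ref{K4weightsZ7-lem} nor the $8$-edge-connectedness needed for Lemma~\ref{LEM: smallH2}(c-iii) is available.
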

\begin{proof}
Suppose $G$ contains a copy of $T_{4,4,4}\diiit$ with vertices
$v_1,v_2,v_3,w_1,w_2,w_3$ and $d(v_i)=8$ and $d(w_i)=2$ for all $i$ and
$N(w_i)=\{v_1,v_2,v_3\}\setminus\{v_i\}$.  Form $G'$ from $G$ by lifting the
pair of edges incident to each vertex $w_i$ and contracting the resulting
$T_{4,4,4}$.  This is a lifting reduction of the first type.  Since $T_{4,4,4}$
is strongly $\Z_7$-connected by Lemma~\ref{Z7-contract-configs}, it suffices to
show that $G'$ is also strongly $\Z_7$-connected.  Claims~\ref{CL: ess12}
and~\ref{CL: deltage10} imply that $G'$ is 6-edge-connected.
The trivial partition $\P^*$ of $G'$
satisfies $\rho_{G'}(\P^*)\ge \rho(G)+17(2)-2(15)\ge 4$.  Each nontrivial
partition $\P'$ of $G'$ corresponds to a normal partition $\P$ of $G$ in which
the contracted vertex is replaced by $\{v_1,v_2,v_3\}$.  We show below that for
such a partition we can strengthen Claim~\ref{CL: noalmostge14} to
$\rho_G(\P)\ge 15$.  Then we have $\rho_{G'}(\P')\ge \rho_G(\P)-2(6)\ge 3$ by
Claim~\ref{CL: nontrivialge716}(b), since at most six edges are counted in $\rho_G(\P)$ but
not in $\rho_{G'}(\P')$.  Thus, $\rho(G')\ge 3$, so Lemma~\ref{LEM:
smallH2}(c-ii) implies that $G'$ is strongly $\Z_7$-connected, which is a
contradiction.  Now it suffices to show that $\rho_G(\P)\ge 15$.

Suppose, to the contrary, that $\rho_G(\P)\le 14$.  Let $P_1$ be the part of
$\P$ containing $\{v_1,v_2,v_3\}$, and let $H=G[P_1]$.  We will show that
$H$ is strongly $\Z_7$-connected, which gives a contradiction.
Let $\Q=\{Q_1,\ldots,Q_s\}$ be a partition of $H$.  Let $\P''=\Q\cup(\P\setminus
\{P_1\})$.  Eq.~\eqref{EQ: wH2} implies
$\rho_H(\Q)=\rho_G(\P'')-\rho_G(\P)+14\ge \rho_G(\P'')\ge 0$.  Further, if $\Q$
is a nontrivial partition of $H$, then $\P''$ is a nontrivial partition of $G$,
so Claim~\ref{CL: nontrivialge716} implies $\rho_H(\Q)\ge \rho_G(\P'')\ge 7$.
Since $H$ contains $T_{3,3,3}$ by construction, and $G$ does not contain
$T_{2,2,4}$, we know that $H\notin\F$.  To apply Lemma~\ref{LEM: smallH2}(c-i),
we show that $H$ is 6-edge-connected.  Consider a bipartition $\Q=\{Q_1,Q_2\}$
of $H$.  Since $\Q$ is nontrivial, $7\le \rho_G(\P'')\le
\rho_H(\Q)=2|[Q_1,Q_2]_H|-2(17)+31$, which implies $|[Q_1,Q_2]_H|\ge 5$.
That is, $H$ is 5-edge-connected.  If $H$ is 6-edge-connected,  then
Lemma~\ref{LEM: smallH2}(c-i) implies that $H$ is strongly $\Z_7$-connected,
which is a contradiction.  So assume $H$ has a bipartition $\Q=\{Q_1,Q_2\}$ with
$|[Q_1,Q_2]_H|=5$. By symmetry, we assume $|Q_1|\ge |Q_2|$.
Since $H$ contains $T_{3,3,3}$ and $T_{3,3,3}$ is 6-edge-connected, we know
that $|Q_1|\ge3$.  
Now $\rho_G(\P'')=\rho_G(\P)+2(5)-17\le 14-7=7$.  Since $\P''$ is normal with
$|Q_1|\ge 3$, this contradicts Claim~\ref{CL: nontrivialge716}.
\end{proof}

\subsection{Discharging}
Fix a plane embedding of a planar graph $G$ such that $\rho(G)\ge 0$.
(We assume that all parallel edges between two vertices $v$ and $w$ are
embedded consecutively, in the cyclic orders, around both $v$ and $w$.)
If $G$ has a cut-vertex, then each block of is strongly $\Z_7$-connected by
minimality, so $G$ is strongly $\Z_7$-connected by Lemma~\ref{reduc-lem},
which is a contradiction. Hence $G$ is $2$-connected.
Since $\rho(G)\ge 0$, we have $2\|G\|-17|G|+31\ge 0$.  By Euler's Formula,
$|G|+|F(G)|-\|G\|=2$.  Now solving for $|G|$ and substituting into the
inequality gives:
$$
\sum_{f\in F(G)}\ell(f)=2\|G\|\le \frac{34}{15}|F(G)|-\frac25.
$$
We assign to each face $f$ initial charge $\ell(f)$.  So the total charge is
strictly less than $34|F(G)|/15$.  To reach a contradiction, we redistribute
charge so that each face ends with charge at least $34/15$.
We use the following three discharging rules.

\begin{enumerate}
\item[(R1)] Each 2-face takes charge $2/15$ from each weakly adjacent
$3^+$-face.
\item[(R2)] Each 3-face takes charge $2/15$ from each weakly adjacent
$4^+$-face with which its parallel edge has multiplicity at most 3 and $1/15$
from each weakly adjacent $4^+$-face with which its parallel edge has
multiplicity 4.
\item[(R3)] After (R1) and (R2), each 3-face with more than $34/15$ splits its
excess equally among weakly adjacent 3-faces with less than $34/15$.
\end{enumerate}

Now we show that each face ends with charge at least $34/15$.  By (R1) each
2-face ends with $2+2(2/15)=34/15$.  Consider a $5^+$-face $f$.
Since $G$ contains no copy of $6K_2$, each edge of $f$ has mutliplicity at most
5.  Since $G$ contains no copy of $T_{1,1,5}$, face $f$ sends at most $4(2/15)$
across each of its edges.  Thus $f$ ends with at least
$\ell(f)-4(2/15)\ell(f)=7\ell(f)/15\ge 35/15$.  Consider a 4-face $f$.  Since
$G$ contains no copy of $T^{\,\bullet}_{1,1,5}$, each edge of $f$
has multiplicity at most 4.  So $f$ sends at most $3(2/15)+1/15=7/15$ across
each of its edges.  If $f$ sends at most 5/15 across one edge, then $f$ ends
with at least $4-3(7/15)-5/15=34/15$.  If $f$ sends at most $6/15$ across at
least two of its edges, then $f$ ends with at least $4-2(7/15)-2(6/15)=34/15$.
So assume that neither of these cases holds.  Thus, each edge of $f$ has
multiplicity 4, and $f$ is weakly adjacent to 3-faces across at least three of
its edges.  This contradicts Claim~\ref{CL: noYnoYid}.

Let $f$ be a 3-face $T_{a,b,c}$.  If $a+b+c\le 8$, then $f$ ends (R2) with at least
$3-(8-3)(2/15)=35/15$.  So assume $a+b+c\ge 9$.  Since $G$ has no $T_{1,1,5}$,
we know $\max\{a,b,c\}\le 4$.  Since $G$ has no $T_{2,2,4}$, if $\max\{a,b,c\}=4$,
then $\min\{a,b,c\}=1$.  Thus, each 3-face $T_{a,b,c}$ finishes (R1) with excess
charge at least $1/15$ unless $T_{a,b,c}\in\{T_{1,4,4},T_{3,3,3}\}$.
So we only need to consider $T_{1,4,4}$ and $T_{3,3,3}$.
Suppose $f$ is $T_{1,4,4}$.  Each face adjacent to $f$ across an edge of
multiplicity 4 is not a 3-face, since $G$ has no $T_{1,1,5}\dit$.  So $f$ ends (R2)
with at least $3-(9-3)(2/15)+2(1/15)=35/15$.
Hence, each 3-face $f$ ends (R2) with at least $35/15$ unless $f$ is $T_{3,3,3}$.

So assume that $f$ is $T_{3,3,3}$.  If any adjacent face is not a
3-face, then $f$ ends (R2) with at least $3-(9-3)(2/15)+2/15=35/15$.  So assume
each adjacent face is a 3-face.  If these three adjacent faces do not intersect
outside $f$, then $G$ contains a copy of $T_{4,4,4}\diiit$, a contradiction.
If all three faces intersect outside $f$, then $|V(G)|=4$, which contradicts
Claim~\ref{CL: Vge5}.  So assume that exactly two faces
adjacent to $f$ intersect outside $f$.  Let $f_1$ and $f_2$ denote the
3-faces adjacent to $f$ that intersect outside $f$.  Denote
the boundaries of $f$, $f_1$, and $f_2$ by (respectively) $vwx$, $vwy$,
and $wxy$.
Suppose $\mu(wy)\ne 3$.  Now $f_1$ and $f_2$ each end (R2) with at least
$35/13$, so by (R3) each gives $f$ at least $(1/2)(1/15)$.  Thus $f$ ends happy.
So assume $\mu(wy)=3$.  Now $d(w)=3+3+3$, which contradicts that $\delta(G)\ge
10$, by Claim~\ref{CL: deltage10}.  This completes the proof.

{\footnotesize

}

\newpage
\section*{Appendix: Proofs of Lemmas \ref{Z7-contract-configs} and \ref{K4weightsZ7-lem}}
\noindent{\bf Lemma \ref{Z7-contract-configs}.} {\em Each of the following graphs is strongly $\Z_7$-connected: $6K_2$,
$3K_4^{+}$, and every 6-edge-connected graph $T_{a,b,c}$ where $a+b+c=12$.}

\begin{proof}
Throughout we fix a $\Z_7$-boundary $\beta$ and construct an orientation to
achieve $\beta$.

Let $G=6K_2$, with $V(G)=\{v_1,v_2\}$. To achieve
$\beta(v_1)\in\{0,1,2,3,4,5,6\}$, the number of edges we orient out of $v_1$ is
(respectively) 3, 0, 4, 1, 5, 2, 6.

Let $G=T_{a,b,c}$, with $a+b+c=12$ and $\delta(G)\ge 6$.  (We handle this before
$3K_4^+$.)  Let
$V(G)=\{v_1,v_2,v_3\}$.  If $G$ contains a 6-vertex, say  $v_1$, then $\mu(v_2v_3)=6$. Since $G/v_2v_3\cong6K_2$ is strongly $\Z_7$-connected, $G$ is strongly $\Z_7$-connected by Lemma \ref{reduc-lem}(ii). So assume that $\delta(G)\ge 7$.  If $G$ contains a 7-vertex
$v_i$ and $\beta(v_i)\ne 0$, then we orient $5$ edges incident to $v_i$ to achieve
$\beta(v_i)$, and lift the remaining pair of nonparallel edges to form a new
edge.  We are done, since $6K_2$ is strongly $\Z_7$-connected.
If $G$ contains an 8-vertex $v_j$ and $\beta(v_j)\notin\{1,6\}$, then we orient
4 edges incident to $v_j$ to achieve $\beta(v_j)$, and lift two pairs of
nonparallel edges to form new edges.  Again we are done, since $6K_2$ is
strongly $\Z_7$-connected.  Since $\|G\|=12$ and $\delta(G)\ge 7$, the possible
  degree sequences of $G$ are (a) $\{7,7,10\}$, (b) $\{7,8,9\}$, and (c)
$\{8,8,8\}$.  The edge multiplicities of $G$ are the three values $\|G\|-d(v_i)$.
So $G$ is (a) $T_{2,5,5}$, (b) $T_{3,4,5}$, or (c) $T_{4,4,4}$.
In each case we assume $d(v_1)\le d(v_2)\le d(v_3)$.  In (a) we may
assume $\beta(v_1)=\beta(v_2)=0$, which implies $\beta(v_3)=0$.
To achieve this boundary, orient all edges out of $v_1$ and all edges into $v_2$.
In (b) we may assume $\beta(v_1)=0$, $\beta(v_2)=1$, and $\beta(v_3)=6$.
To achieve this boundary, orient all edges out of $v_2$ and all edge into $v_1$.
(If instead $\beta(v_2)=6$ and $\beta(v_3)=1$, then we reverse the direction of all edges.)
In (c) we assume $\beta(v_i)\in\{1,6\}$ for all $i$.  This
yields a contradiction, since $\sum_{i=1}^3\beta(v_i)\equiv 0\pmod{7}$.

Let $G=3K_4^+$, with $V(G)=\{v_1,v_2,v_3,v_4\}$ and $d(v_1)=d(v_2)=9$
and $d(v_3)=d(v_4)=10$.  Similar to the previous paragraph, we may assume
$\beta(v_1)=\beta(v_2)=0$, $\beta(v_3)=1$, and $\beta(v_4)=6$.  (If not, then
we can lift some edges pairs at $v_i$ and use the remaining edges incident to
$v_i$ to achieve $\beta(v_i)$.) To achieve this boundary, start by orienting
all edges out of $v_1$, all edges into $v_2$, and all edges $v_4v_3$ out of
$v_4$.  Now reverse one copy of $v_3v_2$ and reverse one copy of $v_1v_4$.
\end{proof}

\noindent{\bf Lemma \ref{K4weightsZ7-lem}.}
{\em The graph $5C_4^=$ is strongly $\Z_7$-connected.  Further, if $G$ is a graph
with $|G|=4$, $\|G\|=19$, $\mu(G)\le 5$, and $\delta(G)\ge 8$, then $G$ is
strongly $\Z_7$-connected.}

\begin{proof}
Assume $G$ satisfies the hypotheses (either the first or second), and let $V(G)=\{v_1,v_2,v_3,v_4\}$.
Our plan is to form a new graph $G_i$ from $G$ by lifting one,
two, or three pairs of edges incident to $v_i$,  using the remaining edges
incident to $v_i$ to achieve the desired boundary $\beta(v_i)$ at $v_i$.
This is a lifting reduction of the second type.  If $\|G_i\|\ge 12$ and $G_i$
is 6-edge-connected, then $G_i$ is strongly $\Z_7$-connected by
Lemma~\ref{Z7-contract-configs}, and so we can find an orientation to achieve
the $\beta$ boundary of $G$.  We will show that in every case we can construct
such a $G_i$, and achieve $\beta(v_i)$ using edges incident to $v_i$ that are
not lifted to form $G_i$.

Denote $V(5C_4^=)$ by
$\{v_1,v_2,v_3,v_4\}$, with $N(v_1)=N(v_3)=\{v_2,v_4\}$, and fix a $\Z_7$-boundary
$\beta$.  If $\beta(v_1)\in \{1,3,4,6\}$, then we lift three pairs of edges
incident to $v_1$ and use the remaining edges to achieve $\beta(v_1)$. Notice that the resulting graph $G_1$ satisfies $\|G_1\|= 12$, and we are done in this case.
So, by symmetry, we assume $\beta(v_i)\in \{0,2,5\}$ for each $i$.  The possible
multisets of $\beta$ values are $\{0,0,0,0\}$, $\{0,0,2,5\}$, and $\{2,5,2,5\}$.
Up to symmetry, we have five possible $\Z_7$-boundaries.  Figure~\ref{pics2} shows
orientations that achieve these.

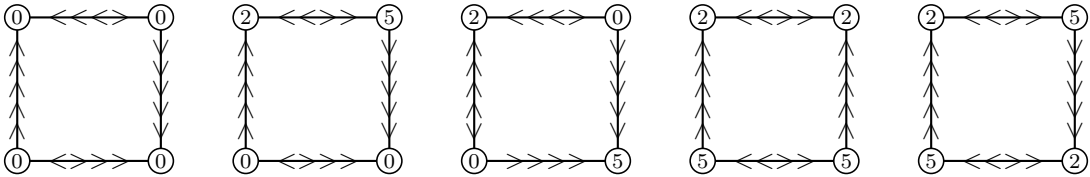
\begin{figure}[hb]
\centering

\tikzstyle{uStyle}=[shape = circle, minimum size = 6pt, inner sep = 1pt,
outer sep = 0pt, fill=white, semithick, draw]
\tikzstyle{lStyle}=[shape = circle, minimum size = 5pt, inner sep =
0.5pt, outer sep = 0pt, font=\footnotesize]
\tikzstyle{vlStyle}=[shape = circle, minimum size = 4pt, inner sep =
1.0pt, outer sep = 0pt, draw, fill=white, semithick, font=\footnotesize]
\def\spacer{5cm}

\begin{tikzpicture}[thick,scale=.75]
\def\rad{1.8cm}
\def\spacer{4.05cm}

\foreach \ang/\name/\boundary in {45/1/0, 135/2/0, 225/3/0, 315/4/0}
  \draw (\ang:\rad) node[vlStyle] (v\name) {\footnotesize{\boundary}};
\foreach \i/\j/\label in {1/2/{<<<>}, 2/3/{<<<<<}, 3/4/{<>>>}, 4/1/{<<<<<}}
  \draw (v\i) -- (v\j) node[midway, sloped]{$\label$};

\begin{scope}[xshift=\spacer]
\foreach \ang/\name/\boundary in {45/1/5, 135/2/2, 225/3/0, 315/4/0}
  \draw (\ang:\rad) node[vlStyle] (v\name) {\footnotesize{\boundary}};
\foreach \i/\j/\label in {1/2/{<<>>}, 2/3/{<<<<<}, 3/4/{<>>>}, 4/1/{<<<<<}}
  \draw (v\i) -- (v\j) node[midway, sloped]{$\label$};
\end{scope}

\begin{scope}[xshift=2*\spacer]
\foreach \ang/\name/\boundary in {45/1/0, 135/2/2, 225/3/0, 315/4/5}
  \draw (\ang:\rad) node[vlStyle] (v\name) {\footnotesize{\boundary}};
\foreach \i/\j/\label in {1/2/{<<<>}, 2/3/{<<<<>}, 3/4/{>>>>}, 4/1/{<<<<<}}
  \draw (v\i) -- (v\j) node[midway, sloped]{$\label$};
\end{scope}

\begin{scope}[xshift=3*\spacer]
\foreach \ang/\name/\boundary in {45/1/2, 135/2/2, 225/3/5, 315/4/5}
  \draw (\ang:\rad) node[vlStyle] (v\name) {\footnotesize{\boundary}};
\foreach \i/\j/\label in {1/2/{<<>>}, 2/3/{<<<<<}, 3/4/{<<>>}, 4/1/{>>>>>}}
  \draw (v\i) -- (v\j) node[midway, sloped]{$\label$};
\end{scope}

\begin{scope}[xshift=4*\spacer]
\foreach \ang/\name/\boundary in {45/1/5, 135/2/2, 225/3/5, 315/4/2}
  \draw (\ang:\rad) node[vlStyle] (v\name) {\footnotesize{\boundary}};
\foreach \i/\j/\label in {1/2/{<<>>}, 2/3/{<<<<<}, 3/4/{<<>>}, 4/1/{<<<<<}}
  \draw (v\i) -- (v\j) node[midway, sloped]{$\label$};
\end{scope}

\end{tikzpicture}
\caption{Orientations achieving the possible boundaries with
$\beta(v_i)\in\{0,2,5\}$ for all $i$.\label{pics2}}
\end{figure}

Now we prove the second statement.
Suppose $G$ contains an 8-vertex $v_i$.  To form $G_i$, we lift one (arbitrary,
nonparallel) pair of edges incident to $v_i$.  Now $\|G_i\|= 19-8+1=12$.  If
$G_i$ contains a copy of $6K_2$, then we are done by Lemma~\ref{reduc-lem}, since
$6K_2$ is strongly $\Z_7$-connected, and contracting this copy of $6K_2$ yields
another $6K_2$.  So instead we assume $\mu(G_i)\le 5$.
The edge-connectivity of $G_i$ is $\delta(G_i)=\|G_i\|-\mu(G_i)\ge 12-5=7$.
Since $G_i$ is 6-edge-connected, we are done by Lemma~\ref{Z7-contract-configs}.
Hence, we assume that $\delta(G)\ge 9$ below.

Suppose some pair $v_i,v_j$ of vertices has no edges joining it; that is,
$\mu(v_iv_j)=0$.  By symmetry, we assume $i=1$ and $j=2$.  Since
$d(v_1)\ge 9$ and $d(v_2)\ge 9$, we get that $\mu(v_1v_3)+\mu(v_1v_4)\ge 9$ and
$\mu(v_2v_3)+\mu(v_2v_4)\ge 9$.  Since $G$ has no $6K_2$, each edge of the
4-cycle $v_1v_3v_2v_4$ has multiplicity at least 4.  Either $\mu(v_1v_3)=5$ or
$\mu(v_1v_4)=5$; by symmetry we assume the latter.  If $\mu(v_3v_4)=1$, then
we lift edge $v_1v_3,v_3v_4$ to form a new copy of $v_1v_4$.  We contract the
resulting $6K_2$ induced by $\{v_1,v_4\}$.  The resulting graph $G'$ is
$T_{3,4,5}$, so we are done by Lemmas~\ref{reduc-lem} and \ref{Z7-contract-configs}.  Instead assume
$\mu(v_1v_3)=0$.  Now $G=5C_4^-$ (formed from $5C_4$ by deleting a single
edge). Thus $G$ contains $5C_4^=$ as a spanning subgraph, and so $G$ is  strongly $\Z_7$-connected by
Lemma~\ref{Z7-contract-configs}.
Thus, we assume $\mu(v_iv_j)\ge 1$ for all distinct $i,j\in[4]$.

Suppose $\mu(v_iv_j)=5$ for some distinct $i,j\in[4]$; by symmetry, say $\mu(v_1v_1)=5$.  Since $\mu(v_1v_3)\ge 1$ and $\mu(v_2v_3)\ge 1$, we lift one copy of
each of $v_1v_3$ and $v_3v_2$ to form a new copy of $v_1v_2$, and then contract
$\{v_1,v_2\}$ (calling the new vertex $w$).  Denote this new graph by $G'$.  We
show that $G'$ is strongly $\Z_7$-connected, which implies the result for $G$
by Lemma~\ref{reduc-lem}, since $6K_2$ is strongly $\Z_7$-connected.
We first show that $G$ is $8$-edge-connected.  Each edge
cut separating a single vertex $v_i$ has size $d(v_i)\ge \delta(G)\ge 8$.
If an edge cut $S$ separates $G$ into two parts of size 2, then $|S|\ge
\|G\|-2\mu(G) \ge 19-2(5)=9$.  Thus, $G$ is 8-edge-connected, which implies that
$G'$ is 6-edge-connected.  Since $\|G\|=19$, we have $\|G'\|=19-7=12$.
So $G'$ is strongly $\Z_7$-connected, by Lemma~\ref{Z7-contract-configs}.
Thus $G$ is strongly $\Z_7$-connected by Lemma~\ref{reduc-lem}(ii).
This implies that $\mu(v_iv_j)\le 4$ for each pair $i,j\in[4]$.

Suppose that $\mu(v_iv_j)=1$ for some pair $i,j\in[4]$;  say
$\mu(v_1v_2)=1$.  Since $d(v_1)\ge 9$ and $d(v_2)\ge 9$ and $\mu(G)\le 4$, we
have $\mu(v_1v_3)=\mu(v_1v_4)=\mu(v_2v_3)=\mu(v_2v_4)=4$.  Since $\|G\|=19$,
this implies $\mu(v_3v_4)=2$; see Case 1 in Figure~\ref{figK4abcdef}.  By
orienting 5 edges incident to a vertex $v_i$ we can achieve any boundary
value $\beta(v_i)$ other than 0.  So if
$\beta(v_1)\ne 0$ or $\beta(v_2)\ne 0$, then we achieve it by orienting 5
incident edges, and lifting two pairs of incident edges to reduce to a
6-edge-connected subgraph $G_i$ with $\|G_i\|=12$.  Similarly, by orienting
4 edges incident to a vertex $v_i$ we can achieve any boundary value at
$v_i$ other than 1 or 6.  So if $\beta(v_3)\notin\{1,6\}$ or
$\beta(v_4)\notin\{1,6\}$, then we achieve $\beta(v_i)$ by orienting 4 edges
incident to $v_i$ and lifting 3 pairs of incident edges; we do this so that the
three newly created edges in $G_i$ are not all parallel.  Since $\mu(G)\le 4$ we
have $\mu(G_i)\le 6$.  Now we can finish on $G_i$, by
Lemma~\ref{Z7-contract-configs}.  Thus, by symmetry between $v_3$ and $v_4$, we
assume $\beta(v_1)=\beta(v_2)=0$, $\beta(v_3)=1$, and $\beta(v_4)=6$.  Case~1
in Figure~\ref{figK4abcdef} shows an orientation achieving this boundary.  So in what remains
we assume that $\mu(v_iv_j)\ge 2$ for each pair $i,j\in[4]$.

Since $\|G\|=19$ and $\delta(G)\ge 9$, the degree sequence is either
$\{9,9,9,11\}$ or $\{9,9,10,10\}$.  Suppose we are in the first case.  By
symmetry, we assume $d(v_4)=11$, $\mu(v_1v_4)=\mu(v_2v_4)=4$, and $\mu(v_3v_4)=3$.
Since $d(v_1)=d(v_2)=d(v_3)=9$ and
$\mu(v_1v_2)+\mu(v_1v_3)+\mu(v_2v_3)=8$, we have $\mu(v_1v_2)=2$ and
$\mu(v_1v_3)=\mu(v_2v_3)=3$.  See Case~2 of Figure~\ref{figK4abcdef}.
If $\beta(v_i)\ne 0$ for any $i\in \{1,2,3\}$,
then we achieve $\beta(v_i)$ by orienting 5 edges incident to $v_i$, and we
lift two pairs of incident edges to form $G_i$, which is 6-edge-connected and has
$\|G_i\|=12$.  So we assume $\beta(v_1)=\beta(v_2)=\beta(v_3)=0$.  This implies
that also $\beta(v_4)=0$.  Case~2 in Figure~\ref{figK4abcdef} shows an orientation achieving this
boundary.

\begin{figure}[t]
\centering

\tikzstyle{uStyle}=[shape = circle, minimum size = 6pt, inner sep = 1pt,
outer sep = 0pt, fill=white, semithick, draw]
\tikzstyle{lStyle}=[shape = circle, minimum size = 5pt, inner sep =
0.5pt, outer sep = 0pt, font=\footnotesize]
\tikzstyle{vlStyle}=[shape = circle, minimum size = 4pt, inner sep =
1.0pt, outer sep = 0pt, draw, fill=white, semithick, font=\footnotesize]
\def\spacer{5cm}

\begin{tikzpicture}[thick,scale=.75]
\def\rad{2cm}

\draw (0,0) node[vlStyle] (v2) {\footnotesize{0}};
\foreach \ang/\name/\boundary in {90/1/0, 210/3/1, 330/4/6}
  \draw (\ang:\rad) node[vlStyle] (v\name) {\footnotesize{\boundary}};
\foreach \i/\j/\label in {1/2/{<}, 1/3/{<<<<}, 1/4/{>>>>}, 2/3/{<<<<}, 2/4/{<>>>},
3/4/{>>}}
  \draw (v\i) -- (v\j) node[midway, sloped]{$\label$};
\draw (0,-2) node {\footnotesize{Case 1}};

\begin{scope}[xshift=\spacer]
\draw (0,0) node[vlStyle] (v2) {\footnotesize{0}};
\foreach \ang/\name/\boundary in {90/1/0, 210/3/0, 330/4/0}
  \draw (\ang:\rad) node[vlStyle] (v\name) {\footnotesize{\boundary}};
\foreach \i/\j/\label in {1/4/{>>>>}, 1/2/{<>}, 1/3/{<<<}, 4/2/{>>>>}, 4/3/{<<>},
2/3/{<<<}}
  \draw (v\i) -- (v\j) node[midway, sloped]{$\label$};
\draw (0,-2) node {\footnotesize{Case 2}};
\end{scope}

\begin{scope}[xshift=2*\spacer]
\draw (0,0) node[vlStyle] (v2) {\footnotesize{0}};
\foreach \ang/\name/\boundary in {90/1/1, 210/3/0, 330/4/6}
  \draw (\ang:\rad) node[vlStyle] (v\name) {\footnotesize{\boundary}};
\foreach \i/\j/\label in {1/2/{<>}, 1/3/{<<<<}, 1/4/{>>>>}, 2/3/{<<<}, 2/4/{>>>>},
3/4/{<>}}
  \draw (v\i) -- (v\j) node[midway, sloped]{$\label$};
\draw (0,-2) node {\footnotesize{Case 3}};
\end{scope}

\begin{scope}[xshift=3*\spacer]
\draw (0,0) node[vlStyle] (v2) {\footnotesize{0}};
\foreach \ang/\name/\boundary in {90/1/0, 210/3/1, 330/4/6}
  \draw (\ang:\rad) node[vlStyle] (v\name) {\footnotesize{\boundary}};
\foreach \i/\j/\label in {1/2/{<>}, 1/3/{<<<}, 1/4/{>>>>}, 2/3/{<<<<}, 2/4/{>>>},
3/4/{<>>}}
  \draw (v\i) -- (v\j) node[midway, sloped]{$\label$};
\draw (0,-2) node {\footnotesize{Case 4}};
\end{scope}

\end{tikzpicture}
\caption{In each case $v_1$ is at top, $v_2$ center, $v_3$ left, and $v_4$
right.\label{figK4abcdef}}
\end{figure}
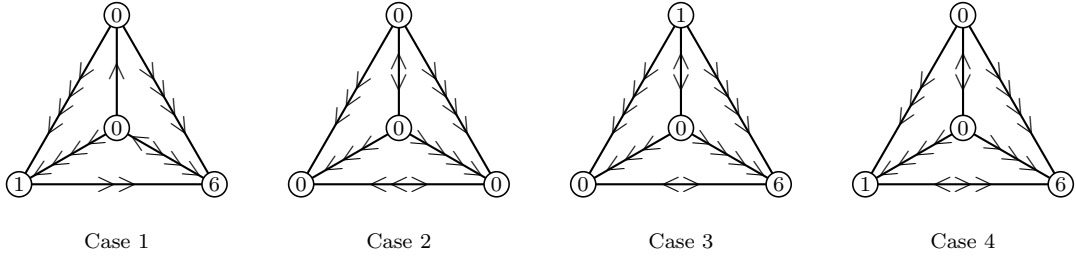

Finally, assume the degree sequence is $\{9,9,10,10\}$ and $\mu(v_iv_j)\ge
2$ for each pair $i,j\in[4]$.  If $\mu(v_iv_j)\ge 3$ for each pair
$i,j\in[4]$ then $G\cong 3K_4^+$, which contradicts
Lemma~\ref{Z7-contract-configs}.  So
assume by symmetry that $\mu(v_1v_2)=2$.  First suppose that $d(v_1)=10$.
This implies $\mu(v_1v_3)=\mu(v_1v_4)=4$.  Since each edge has multiplicity
2, 3, or 4, we cannot have $d(v_2)=10$ (because otherwise $\mu(v_3v_4)=1$).  So $d(v_2)=9$ and, by symmetry between
$v_3$ and $v_4$, we assume $d(v_3)=9$ and $d(v_4)=10$.  This implies that
$\mu(v_2v_3)=3$, $\mu(v_2v_4)=4$, and $\mu(v_3v_4)=3$; see Case 3 of Figure~\ref{figK4abcdef}.
As above, we can lift
two or three pairs of incident edges if either $\beta(v_2)\ne 0$, $\beta(v_3)\ne 0$,
$\beta(v_1)\notin\{1,6\}$, or $\beta(v_4)\notin\{1,6\}$.  So we assume
$\beta(v_2)=\beta(v_3)=0$, $\beta(v_1)=1$, and $\beta(v_4)=6$.  (If, instead,
$\beta(v_2)=\beta(v_3)=0$, $\beta(v_1)=6$, and $\beta(v_4)=1$, then we can
achieve this by reversing every edge.)  The desired orientation is shown in
Case~3 of Figure~\ref{figK4abcdef}.

Again assume the  degree sequence is $\{9,9,10,10\}$ and that
$\mu(v_1v_2)=2$.  Rather than as above, we now assume $d(v_1)=d(v_2)=9$.  So
$d(v_3)=d(v_4)=10$.  By symmetry between $v_3$ and $v_4$ (and also between $v_1$
and $v_2$) we assume $\mu(v_1v_3)=\mu(v_2v_4)=3$, $\mu(v_1v_4)=\mu(v_2v_3)=4$,
and $\mu(v_3v_4)=3$.  For the same reasons as in the previous paragraph, we
assume $\beta(v_1)=\beta(v_2)=0$, $\beta(v_3)=1$, and $\beta(v_4)=6$.  Now the
desired orientation is shown in Case~4 of Figure~\ref{figK4abcdef}.  This completes the proof.
\end{proof}

\end{document}